\documentclass[12pt]{amsart}
\usepackage{amssymb}


\usepackage[
width=31pc,
height=48pc,
margin=1in,
footskip=30pt,
]{geometry}
\usepackage{layout}

\usepackage{graphicx}
\usepackage{epsfig}
\usepackage{subcaption}
\usepackage{amsmath,amsfonts,epstopdf}
\usepackage{tikz-cd}
\usepackage{multirow}

\theoremstyle{plain}
\newtheorem{theorem}{Theorem}[section]
\newtheorem{lemma}[theorem]{Lemma}

\theoremstyle{definition}

\newtheorem{thma}{Theorem}

\numberwithin{equation}{section}

\newcommand{\R}{\mathbb{R}}
\newcommand{\N}{\mathbb{N}}
\newcommand{\Z}{\mathbb{Z}}
\newcommand{\T}{\mathbb{T}}
\newcommand{\C}{\mathbb{C}}

\renewcommand{\phi}{\varphi}

\newcommand{\hpi}{\hat{\pi}}
\newcommand{\tpi}{\tilde{\pi}}
\renewcommand{\Im}{\mathrm{Im}}
\renewcommand{\Re}{\mathrm{Re}}

\title[Existence of non-trivial embeddings of IETs into PWIs]{Existence of non-trivial embeddings of Interval Exchange Transformations into Piecewise Isometries}
\author{Pedro Peres and Ana Rodrigues}
\address{Department of Mathematics\\University of Exeter\\Exeter EX4 4QF, UK}
\begin{document}

\maketitle

\begin{abstract}
	We prove that almost every interval exchange transformation, with an associated translation surface of genus $g\geq 2$, can be non-trivially and isometrically embedded in a family of piecewise isometries. In particular this proves the existence of invariant curves for piecewise isometries, reminiscent of KAM curves for area preserving maps, which are not unions of circle arcs or line segments.
\end{abstract}

\section{Introduction}

An \textit{interval exchange transformation (IET)} is a bijective piecewise order preserving isometry $f$ of an interval $I \subset \R$. Specifically $I$ is partitioned into subintervals $\{I_{\alpha}\}_{\alpha \in \mathcal{A}}$, indexed over a finite alphabet $\mathcal{A}$ of $d\geq 2$ symbols, so that the restriction of $f$ to each subinterval is a translation.
An IET $f$ is determined by a vector $\lambda \in \R_+^{\mathcal{A}}$, with coordinates $\lambda_{\alpha}$ determining the lengths of the subintervals $I_{\alpha}$, and an irreducible permutation $\pi$ which describes the ordering of the subintervals before and after applying $f$. We write $f=f_{\lambda,\pi}$ and also denote an IET by the pair $(I,f_{\lambda,\pi})$.

IETs were studied for instance in \cite{Ke, V1, V4} and are reasonably well understood. In \cite{M,V1} Masur and Veech proved that a typical IET is uniquely ergodic while Avila and Forni \cite{AF} established that a typical IET is either weakly mixing or an irrational rotation.

A \textit{translation surface} (as defined in \cite{AF}), is a surface with a finite number of conical singularities endowed with an atlas such that coordinate changes are given by translations in $\R^2$. Given an IET it is possible to associate, via a suspension construction, a translation surface, with genus $g(\mathfrak{R})$ only depending on the combinatorial properties of the underlying IET (see \cite{V1}). Indeed these maps are deeply related to geodesic flows on flat surfaces, Teichm{\"u}ller flows in moduli spaces of Abelian differentials and polygonal billiards \cite{M}.\\

\textit{Piecewise isometries (PWIs)} are higher dimensional generalizations of one dimensional interval exchange transformations.
Let $X$ be a subset of $\C$ and $\mathcal{P}=\{X_{\alpha}\}_{\alpha \in \mathcal{A}}$ be a finite partition of $X$ into convex sets (or \textit{atoms}), that is $\bigcup_{\alpha \in \mathcal{A}} X_{\alpha} = X$ and $X_{\alpha} \cap X_{\beta} = \emptyset$ for $\alpha \neq \beta$. Given a \textit{rotation vector} $\theta \in \T^{\mathcal{A}}$ (with $\T^{\mathcal{A}}$ denoting the torus $\R^{\mathcal{A}} / 2\pi \Z^{\mathcal{A}}$) and a \textit{translation vector} $\eta \in \C^{\mathcal{A}}$, we say $(X,T)$ is a \textit{piecewise isometry} if $T$ is such that
$$  T(z) := T_{\alpha}(z) = e^{i \theta_{\alpha}} z + \eta_{\alpha}, \ \textrm{if} \ z \in X_{\alpha},   $$
so that $T$ is a piecewise isometric rotation or translation (see \cite{Goetz1}).

PWIs occur naturally in the dynamics of  Hamiltonian systems with periodic kicks \cite{L,SHM} as well as outer billiards \cite{Schwartz}. They are much less understood and appear to have more sophisticated behaviour than IETs. In general, for a given PWI it is helpful to define a partition of $X$ into a \textit{regular} and an \textit{exceptional set} \cite{AG06a}.  If we consider the set given by the union $\mathcal{E}$ of all preimages of the set of discontinuities $D$, then its closure $\overline{\mathcal{E}}$ is called the \textit{exceptional  set} for the map. Its complement is called the {\it regular set} for the map and consists of disjoint polygons or disks that are periodically coded by their itinerary through the atoms of the PWI.
For instance in \cite{AKT}, Adler, Kitchens and Tresser show, for  a particular transformation with a rational rotation vector, that  the  exceptional  set  has  zero  Lebesgue  measure.   However as highlighted in \cite{AFu} there is numerical evidence that the exceptional set may have positive Lebesgue measure for typical PWIs:  certainly this is the case for transformations which are products of minimal IETs (see \cite{Ha}).\\

It is a general belief that the phase space of typical Hamiltonian systems is divided into regions of regular and chaotic motion \cite{C83}. Area preserving maps which can be obtained as Poincar{\'e} sections of Hamiltonian systems, exhibit this property as well, with KAM curves splitting the domain into regions of chaotic and periodic dynamics (see for instance \cite{MP}). A general and rigorous treatment of this has been however missing.

PWIs, which are area preserving maps that have been studied as linear models for the standard map (see \cite{Ash97}), can exhibit a similar phenomena. Unlike IETs which are typically ergodic, there  is numerical evidence, as noted  in  \cite{AG06a}, that Lebesgue measure on the exceptional set is typically not ergodic in some families of PWIs - there can be non-smooth invariant  curves that prevent  trajectories  from  spreading  across  the  whole  of  the exceptional set. For cases where the exceptional set is a union of annuli a small perturbation in the rotational parameters causes it to decompose into invariant curves and periodic orbits, a phenomena that is reminiscent of KAM curves.
An understanding of these invariant curves would thus shed light on the ergodic properties of PWIs and would be an important first step towards the study of the dynamical behaviour shared by generic PWIs and systems which are modelled by these. A proof of their existence however remained elusive for more than a decade.

The first progress was made in \cite{AG06}, where a planar PWI, with a rational rotation vector, whose generating map is a permutation of four cones was investigated, and the existence of an uncountable number of invariant polygonal curves on which the dynamics is conjugate to a transitive interval exchange was proved. The methods used however are based on calculations in a rational cyclotomic field and do not generalize for typical choices of parameters.\\

Recently, in \cite{AGPR}, we related the existence of invariant curves to the general problem of embedding IET dynamics within PWIs, of which we gave rigorous definitions.

An injective map $\gamma:I\rightarrow X$ is a {\em piecewise continuous embedding} of $(I,f)$ into $(X,T)$ if $\gamma|_{I_{\alpha}}$ is a homeomorphism for each $\alpha \in \mathcal{A}$ such that $\gamma(I_{\alpha})\subset X_{\alpha}$ and
\begin{equation*}\label{eq0s2}
\gamma \circ f(x) = T \circ \gamma(x),
\end{equation*}
for all $x\in I$. In this case note that $\gamma(I)\subset X$ is an invariant set for $(X,T)$.

If $\gamma$ is a piecewise continuous embedding that is continuous on $I$, we say it is a {\em continuous embedding} (or \textit{embedding} when this does not cause any ambiguity).

We say $\gamma$ is a {\em differentiable embedding} if it is a piecewise continuous embedding and $\gamma|_{I_{\alpha}}$ is continuously differentiable. We characterize certain differentiable embeddings as, in some sense, trivial. Given $I'\subseteq I$ we say a map $\gamma:I' \rightarrow  \C$ is an \textit{arc map} if there exists $\xi \in \C$, $r,a>0$ and $\varphi \in [0,2\pi)$ such that for all $x \in I'$,
$$  \gamma(x) =  r e^{i(a x+\varphi)}   +\xi .$$
We say an embedding $\gamma: I \rightarrow \C$ of an IET into a PWI is an \textit{arc embedding} if there exists a finite partition of $I$ into subintervals such that the restriction of $\gamma$ to each subinterval is an arc map.
We say an embedding $\gamma$ of an IET into a PWI is a \textit{linear embedding} if $\gamma$ is a piecewise linear map.
Moreover an embedding is \textit{non-trivial} if it is not an arc embedding or a linear embedding. Figure \ref{fig:invertiblePWI} shows an illustration of a non-trivial embedding.

From the definitions it is clear that the image $\gamma(I)$ of an embedding is an invariant curve for the underlying PWI and that if the embedding is non-trivial this curve is not the union of line segments or circle arcs. 
For any IET it is straightforward to construct a PWI in which it is trivially embedded. The same is not true for non-trivial embeddings, for which results have been much scarcer. Indeed it is known (see \cite{AGPR}) that continuous embeddings of minimal 2-IETs into orientation preserving PWIs are necessarily trivial and that any $3$-PWI has at most one non-trivially continuously embedded minimal $3$-IET with the same underlying permutation. Despite supporting numerical evidence to the date of this paper there was no proof of existence of non-trivial embeddings.\\

\begin{figure}[t]
	\begin{subfigure}{.49\textwidth}
		\centering
		\includegraphics[width=1\linewidth]{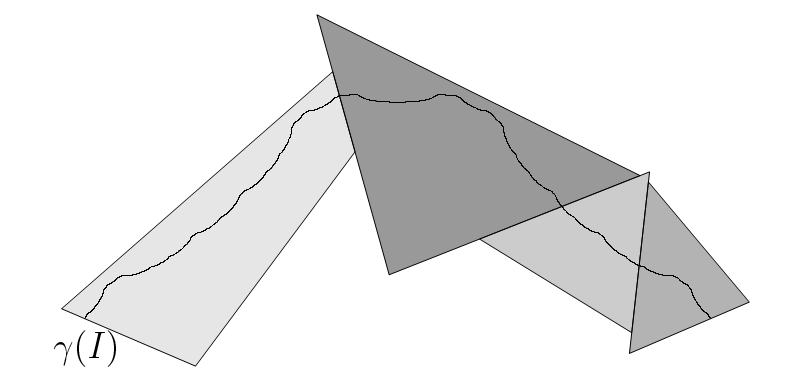}
		\caption{}
		\label{fig:invertiblePWI1}
	\end{subfigure}
	\begin{subfigure}{.49\textwidth}
		\centering
		\includegraphics[width=1\linewidth]{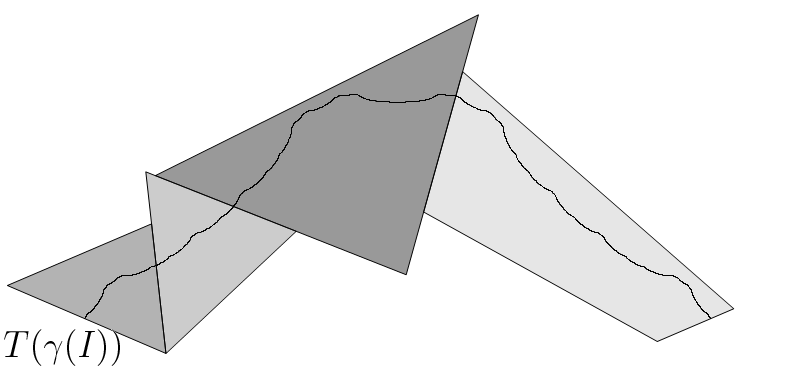}
		\caption{}
		\label{fig:invertiblePWI2}
	\end{subfigure}
	\centering
	\captionsetup{width=1\textwidth}
	\caption{An illustration of the action of a PWI $T$ with rotation vector $\theta \approx (4.85, 0.92, 1.31, 1.28)$ on its partition and on an invariant curve $\gamma(I)$. The map $\gamma$, estimated using technical tools from this paper,  is a non-trivial embedding of a self-inducing IET associated to the permutation $\tpi(j)=4-(j-1)$, $j=1,...,4$ and a translation vector of algebraic irrationals $\lambda \approx (0.43, 0.34, 0.12, 0.11)$.}
	\label{fig:invertiblePWI}
\end{figure}

In this paper we prove that a full measure set of IETs admit non-trivial embeddings into a class of PWIs thus also establishing the existence of invariant curves for PWIs which are not unions of circle arcs or line segments.

\begin{thma}\label{TA}
	For almost every IET $(I,f_{\lambda,\pi})$ satisfying $g(\mathfrak{R})\geq 2$, there exists a set $\mathcal{W} \subseteq \T^{\mathcal{A}}$, of dimension $g(\mathfrak{R})$, such that for all $\theta \in  \mathcal{W}$ there is a family $\mathcal{F}_{\theta}$, of PWIs with rotation vector $\theta$, and a map $\gamma_{\theta} : I \rightarrow \C$, which is a non-trivial and isometric embedding of $(I,f_{\lambda,\pi})$ into any $(X,T) \in \mathcal{F}_{\theta}$.
	Furthermore $\gamma_{\theta}(I)$ is an invariant curve for $(X,T)$ which is not the union of circle arcs or line segments.
\end{thma}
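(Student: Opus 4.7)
The plan is to encode the embedding condition as a cohomological equation over the IET and then to exhibit the desired parameter set $\mathcal{W}$ via the Kontsevich--Zorich cocycle. Any isometric curve $\gamma : I \to \mathbb{C}$ may be written as $\gamma(x) = \gamma(0) + \int_0^x e^{i\phi(t)}\,dt$ for a measurable direction function $\phi$. On each $I_\alpha$ the map $f$ is a translation by some $t_\alpha$ and $T$ acts as $z\mapsto e^{i\theta_\alpha}z+\eta_\alpha$, so the embedding equation $\gamma\circ f=T\circ\gamma$ forces $\gamma(x+t_\alpha)=e^{i\theta_\alpha}\gamma(x)+\eta_\alpha$. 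Differentiating and choosing a consistent real lift reduces this to the scalar cohomological equation
\begin{equation*}
\phi\circ f-\phi=\theta_\alpha \quad \text{on }I_\alpha,
\end{equation*}
for $\phi$ in a suitable regularity class. Conversely, any such $\phi$ gives an isometric $\gamma$, with the translation parts $\eta_\alpha$ determined by the integrated equation rather than being independent parameters.

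Next I would identify the admissible $\theta$. By the Roth-type theory of Marmi--Moussa--Yoccoz, for a full-measure set of IETs the above piecewise constant cohomological equation has a solution of bounded variation precisely when the datum $(\theta_\alpha)_{\alpha\in\mathcal{A}}$ annihilates a finite-dimensional space of invariant distributions. These distributions span the unstable Oseledets subspace of the Kontsevich--Zorich cocycle over the Teichm\"uller flow, which by Forni's positivity theorem has real dimension exactly $g(\mathfrak{R})$ whenever $g(\mathfrak{R})\geq 2$. Reducing this $g$-dimensional linear subspace of $\mathbb{R}^{\mathcal{A}}$ modulo $2\pi$ defines the set $\mathcal{W}\subseteq \mathbb{T}^{\mathcal{A}}$, and solving the cohomological equation for each $\theta\in\mathcal{W}$ produces the isometric curve $\gamma_\theta$. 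The family $\mathcal{F}_\theta$ then arises from the freedom to choose convex atoms $X_\alpha\supset\gamma_\theta(I_\alpha)$ in many compatible ways, subject only to the requirement that the images $T_\alpha(X_\alpha)$ be disjoint and tile $X=\bigcup_\alpha X_\alpha$.

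Finally one must establish non-triviality. A linear embedding corresponds exactly to $\phi$ being piecewise constant on $\{I_\alpha\}$, and an arc embedding to $\phi$ being piecewise affine on some finite refinement of $I$; both conditions force the datum $\theta$ into proper algebraic sub-varieties of $\mathcal{W}$ determined by the permutation $\pi$. The main technical obstacle I anticipate is showing that these exceptional sub-varieties really do have strictly smaller dimension than $g(\mathfrak{R})$, which amounts to ruling out any piecewise-affine structure of the solution $\phi$ despite its being produced only abstractly through the cohomological machinery. To handle this I would exploit the strict hyperbolicity of the Kontsevich--Zorich cocycle when $g(\mathfrak{R})\geq 2$ to show that the discontinuities of $\phi$, which lie on the backwards orbits of the IET singularities, accumulate densely in $I$, precluding piecewise-affine behaviour on any open subinterval. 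Combined with a separate injectivity check for $\gamma_\theta$ using minimality of $f$, this would deliver the claimed non-trivial isometric embedding and hence the invariant curve $\gamma_\theta(I)$ that is not a union of circle arcs or line segments.
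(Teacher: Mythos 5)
Your route via the cohomological equation $\phi\circ f-\phi=\theta_\alpha$ and the Marmi--Moussa--Yoccoz theory is genuinely different from the paper's, which never solves that equation abstractly but instead builds $\gamma_\theta$ as the limit of an explicit piecewise linear ``breaking sequence'' whose convergence is driven by summability of the Rauzy cocycle along the stable Oseledets subspace $F^{g(\mathfrak{R})+1}_{[\lambda],\pi}\subset H_\pi$ of the Zorich cocycle. Both approaches do land on the same $g(\mathfrak{R})$-dimensional parameter set, but the constructive route carries quantitative control that your abstract one lacks, and that control is exactly what your proposal is missing at the two crucial points.

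The gap that sinks the argument is injectivity of $\gamma_\theta$ together with the existence of compatible convex atoms. A bounded-variation solution $\phi$ of the cohomological equation yields a unit-speed curve $\gamma(x)=\gamma(0)+\int_0^x e^{i\phi(t)}\,dt$, but nothing in the MMY machinery prevents this curve from self-intersecting, nor from interleaving so that the arcs $\gamma(I_\alpha)$ cannot be separated by disjoint convex sets $X_\alpha$. Your appeal to ``a separate injectivity check using minimality of $f$'' does not work: minimality is a one-dimensional dynamical property and says nothing about the planar geometry of the curve. The paper handles this by restricting $\theta$ to a small $\delta$-ball in $F^{g(\mathfrak{R})+1}$ and proving $\sum_{n\ge0}\|B_R^{(n)}([\lambda],\pi)\cdot p_\delta^{-1}(\theta)\|<K\delta$, which forces the Lipschitz constant of $\gamma_\theta$ below $\tan\varphi<\infty$; the image is then a Lipschitz graph of small slope, hence injective and trivially separable by convex atoms. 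Your non-triviality argument is also unfinished: you assert that hyperbolicity makes the jump discontinuities of $\phi$ accumulate densely, but the MMY solution can a priori be continuous (even H\"older), and density of the singular orbit of $f$ is not the same as density of jumps of $\phi$. The paper instead shows directly that an arc embedding would force $p_\delta^{-1}(\theta)$ into the one-dimensional strong-stable line $F^{2g(\mathfrak{R})}_{[\lambda],\pi}$ (the span of the translation vector $\upsilon$), while a linear embedding would force $\theta\in\bigcup_{n\ge0}B_{\mathbb{T}^{\mathcal{A}}}^{(-n)}([\lambda],\pi)\cdot 0$, a countable set; both are negligible in $W^\delta_{[\lambda],\pi}$ once $g(\mathfrak{R})\ge 2$. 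Finally, your phrasing that $\mathcal{W}$ is obtained by ``reducing this $g$-dimensional linear subspace modulo $2\pi$'' with the antecedent being the unstable subspace is backwards: the admissible $\theta$ annihilating the obstruction distributions live in the stable piece $F^{g(\mathfrak{R})+1}$, and one cannot take its full projection to $\mathbb{T}^{\mathcal{A}}$ -- $\theta$ must be kept in a small neighbourhood of $0$ precisely to keep the curve tame.
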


To prove this result we inductively define, associated to a given IET, a sequence of piecewise linear parametrized curves, which we call the \textit{breaking sequence}, dependent on a rotation vector $\theta \in \T^{\mathcal{A}}$. In particular for its  construction we define the \textit{breaking operator}, which acts on piecewise linear maps from $I$ to $\C$ by rotating particular segments of their image by a given angle. The construction also involves the \textit{Rauzy cocycle}, an important tool in the theory of IET renormalization.  
We then show that each element of the breaking sequence is a \textit{quasi-embedding} (a rigorous notion defined in Section 4) of the underlying IET into a certain sequence of piecewise isometric maps related to Rauzy induction.
Provided the breaking sequence converges to a topological embedding of the interval, this is enough to show that its limit is an embedding of the underlying IET into a family of PWIs. Hence the following step is to use tools from the theory of IET renormalization and measurable cocycles such as \textit{Zorich cocycle} \cite{Z1} and \textit{Oseledets Theorem} \cite{O} to prove this is the case for almost every $(\lambda,\pi)$ and for $\theta$ contained in a submanifold of $\T^{\mathcal{A}}$. After some further parameter exclusion to guarantee that the embedding is non-trivial we finally conclude the proof of Theorem \ref{TA}.

~

This paper is organized as follows. We start with some basic background on IETs. We then introduce the breaking sequence of curves and prove several technical lemmas which lead to the proof of Theorem \ref{fund_lemma} which states that each curve in the breaking sequence is \textit{quasi-embedded} in a certain PWI. Finally we use tools from the theory of IET renormalization to prove key results which lead to the proof of Theorem \ref{TA}.


\section{Interval Exchange Transformations}
In this section we recall some notions of the theory of interval exchange transformations following \cite{AL}, \cite{SU} and \cite{Vi06}.

We define \textit{interval exchange transformations} as in \cite{AL,Vi06}. Let $\mathcal{A}$ be an alphabet on $d \geq 2$ symbols, and let $I \subset \R$ be an interval having $0$ as left endpoint. In what follows we denote $\R^{\mathcal{A}} \simeq \R^{d}$ and $\R_+^{\mathcal{A}} \simeq \R_+^{d}$. We choose a partition $\{I_{\alpha}\}_{\alpha \in \mathcal{A}}$ of $I$ into subintervals which we assume to be closed on the left and open on the right.
An \emph{interval exchange transformation} (IET) is a bijection of $I$ defined by two data

(1) A vector $\lambda=(\lambda_{\alpha})_{\alpha \in \mathcal{A}} \in \R_+^{\mathcal{A}}$ with coordinates corresponding to the lengths of the subintervals, that is, for all $\alpha \in \mathcal{A}$, $\lambda_{\alpha} = |I_{\alpha}|$. We write $I=I(\lambda)= [0,|\lambda|)$, where $|\lambda|=\sum_{\alpha \in \mathcal{A}} \lambda_{\alpha}$.

(2) A pair $\pi=\left(\begin{array}{ll}
\pi_{0}\\
\pi_{1}
\end{array}\right)$  of bijections $\pi_{\varepsilon}:\mathcal{A}\rightarrow \{  1,...,d \}$, $\varepsilon=0,1$, describing the ordering of the subintervals $I_{\alpha}$ before and after the application of the map. This is represented as
\begin{equation*}\label{eqiet1}
\pi=\left(\begin{array}{llll}
\alpha_{1}^{0} & \alpha_{2}^{0} & ... & \alpha_{d}^{0}\\
\alpha_{1}^{1} & \alpha_{2}^{1} & ... & \alpha_{d}^{1}
\end{array}\right).
\end{equation*}
We call $\pi$ a \textit{permutation} and identify it, at times,  with its \emph{monodromy invariant} $\tilde{\pi}= \pi_1 \circ \pi_0^{-1} :\{1,...d\} \rightarrow \{1,...d\} $. We denote by $\mathfrak{S}({\mathcal{A}})$ the set of irreducible permutations, that is $\pi \in \mathfrak{S}({\mathcal{A}})$ if and only if $\tilde{\pi}(\{1,...,k\})\neq \{1,...,k\}$ for $1\leq k < d$.

Define a linear map $\Omega_{\pi}: \R^{\mathcal{A}} \rightarrow \R^{\mathcal{A}}$ by
\begin{equation}\label{eqiet3}
  \left(\Omega_{\pi}(\lambda') \right)_{\alpha \in \mathcal{A}} =  \sum_{\pi_1(\beta)<\pi_1(\alpha)}\lambda_{\beta}' - \sum_{\pi_0(\beta)<\pi_0(\alpha)}\lambda_{\beta}'   .
\end{equation}
Given a permutation $\pi \in \mathfrak{S}({\mathcal{A}})$ and $\lambda \in \R_+^{\mathcal{A}}$ the interval exchange transformation associated with this data is the map $f_{\lambda,\pi}$ that rearranges $I_\alpha$ according to $\pi$, that is
\begin{equation}\label{eqiet2}
f_{\lambda,\pi}(x)=x+ \upsilon_{\alpha},
\end{equation}
for any $x \in I_\alpha$, where $\upsilon_{\alpha}=(\Omega_{\pi}(\lambda))_{\alpha}$.

We will assume throughout the rest of this paper that $(\lambda,\pi)$ satisfies the \emph{infinite distinct orbit condition (IDOC)}, first introduced by Keane in \cite{Ke}. $(\lambda,\pi)$ satisfies the IDOC if the orbits of the endpoints of the subintervals $\{I_{\alpha}\}_{\alpha \in \mathcal{A}}$ are as disjoint as possible
\begin{equation*}\label{eqiet10}
f_{\lambda,\pi}^n\left( \sum_{\pi_0(\eta)<\pi_0(\alpha)} \lambda_{\eta}    \right) \neq \sum_{\pi_0(\eta)<\pi_0(\beta)} \lambda_{\eta},
\end{equation*}
for all $n \geq 1$ and $\alpha, \beta \in \mathcal{A}$ with $\pi_0(\beta) \neq 1$. In particular the IDOC implies minimality of $f_{\lambda,\pi}$, that is, every orbit is dense in the interval. 

\subsection{Rauzy induction} We define \emph{Rauzy induction} as in \cite{Vi06}. Let $(\lambda,\pi) \in \R_+^{\mathcal{A}} \times \mathfrak{S}({\mathcal{A}})$. For $\varepsilon=0,1$, denote by $\beta_{\varepsilon}$ the last symbol in the expression of $\pi_{\varepsilon}$, that is
\begin{equation*}\label{eqiet4}
\beta_{\varepsilon}=\pi_{\varepsilon}^{-1}(d)=\alpha_{d}^{\varepsilon}.
\end{equation*}
Assume the intervals $I_{\beta_{0}}$ and $I_{\beta_{1}}$ have different lengths. We say that $(\lambda,\pi)$ is of \emph{type} $0$ if $\lambda_{\beta_{0}}>\lambda_{\beta_{1}}$ and is of \emph{type} $1$ if $\lambda_{\beta_{0}}<\lambda_{\beta_{1}}$. The largest interval is called \emph{winner} and the smallest \emph{loser} of $(\lambda,\pi)$. Let $I^{(1)}$ be interval obtained by removing the loser from $I(\lambda)$:
$$I^{(1)}= \left[0,|\lambda|-\min(|\lambda_{\beta_{0}}|,|\lambda_{\beta_{1}}|)  \} \right).$$
 The first return map of $f_{\lambda,\pi}$ to the subinterval $I^{(1)}$ is again an IET, $f_{\lambda^{(1)},\pi^{(1)}}$, where the parameters $(\lambda^{(1)},\pi^{(1)})$ are defined as follows. If $(\lambda,\pi)$ is of type $0$ then
 \begin{equation}\label{eqiet6}
 \pi^{(1)}=\left(\begin{array}{ll}
 \pi_{0}^{(1)}\\
 \pi_{1}^{(1)}
 \end{array}\right)=\left(\begin{array}{llllllll}
 \alpha_{1}^{0} & ... & \alpha_{k-1}^{0} & \alpha_{k}^{0} & \alpha_{k+1}^{0} & ... & ... & \beta_{0}\\
 \alpha_{1}^{1} & ... & \alpha_{k-1}^{1} & \beta_{0} & \beta_{1} & \alpha_{k+1}^{1} & ... & \alpha_{d-1}^{1}
 \end{array}\right).
 \end{equation}
 where $k \in \{1,...,d-1\}$ is defined by $\alpha_{k}^{1}=\beta_{0}$, and $\lambda^{(1)}=(\lambda_{\alpha}^{(1)})_{\alpha \in \mathcal{A}}$, where
  \begin{equation*}\label{eqiet7}
 \lambda_{\alpha}^{(1)}=\lambda_{\alpha} \ \textrm{for} \ \alpha \neq \beta_{0}, \ \textrm{and} \  \lambda_{\beta_{0}}^{(1)}=\lambda_{\beta_{0}} - \lambda_{\beta_{1}}.
  \end{equation*}
  If $(\lambda,\pi)$ is of type $1$ then
  \begin{equation}\label{eqiet8}
  \pi^{(1)}=\left(\begin{array}{ll}
  \pi_{0}^{(1)}\\
  \pi_{1}^{(1)}
  \end{array}\right)=\left(\begin{array}{llllllll}
  \alpha_{1}^{0} & ... & \alpha_{k-1}^{0} & \beta_{1} & \beta_{0} & \alpha_{k+1}^{0} & ... & \alpha_{d-1}^{0}\\
  \alpha_{1}^{1} & ... & \alpha_{k-1}^{1} & \alpha_{k}^{1} & \alpha_{k+1}^{1} & ... & ... & \beta_{1}\\
  \end{array}\right).
  \end{equation}
  where $k \in \{1,...,d-1\}$ is defined by $\alpha_{k}^{0}=\beta_{1}$, and $\lambda^{(1)}=(\lambda_{\alpha}^{(1)})_{\alpha \in \mathcal{A}}$, where
  \begin{equation}\label{eqiet9}
  \lambda_{\alpha}^{(1)}=\lambda_{\alpha} \ \textrm{for} \ \alpha \neq \beta_{1}, \ \textrm{and} \  \lambda_{\beta_{1}}^{(1)}=\lambda_{\beta_{1}} - \lambda_{\beta_{0}}.
  \end{equation}
  The map  $\mathcal{R}:\R_+^{\mathcal{A}} \times \mathfrak{S}({\mathcal{A}}) \rightarrow \R_+^{\mathcal{A}} \times \mathfrak{S}({\mathcal{A}})$ defined by $\mathcal{R}(\lambda,\pi)= (\lambda^{(1)},\pi^{(1)})$ is called the \textit{Rauzy induction map}.
  
  The IDOC condition assures that the iterates $\mathcal{R}^n$ are defined for all $n\geq 0$. We denote 
    \begin{equation}\label{Rauzindn}
    \mathcal{R}^n(\lambda,\pi)=(\lambda^{(n)},\pi^{(n)}),
    \end{equation}
    with  $\pi^{(n)}= (\begin{array}{ll} \pi_{0}^{(n)}  &  \pi_{1}^{(n)} \end{array})^T$. Furthermore we denote by $\beta_{\varepsilon,n}$ the last symbol in the expression of $\pi_{\varepsilon}^{(n)}$, $\varepsilon(n)$ the type of  $f_{\lambda^{(n)},\pi^{(n)}}$,  by $I^{(n)}$ its domain  and by $\{I_{\alpha}^{(n)}\}_{\alpha \in \mathcal{A}}$ its partition in subintervals, for $n\geq 0$. We also denote the translation vector of $f_{\lambda^{(n)},\pi^{(n)}}$ by  $\upsilon^{( n  )} = \Omega_{\pi^{( n  )}}(\lambda^{(n )})$.
  
  \subsection{Rauzy classes}
  The \emph{Rauzy class} (see \cite{Vi06}) of a permutation $\pi\in \mathfrak{S}({\mathcal{A}})$, is the set $\mathfrak{R}(\pi)$ of all $\pi^{(1)}\in \mathfrak{S}({\mathcal{A}})$ such that there exist $\lambda,\lambda^{(1)} \in\R_+^{\mathcal{A}}$ and $n\in \N$ such that $\mathcal{R}^n(\lambda,\pi)=(\lambda^{(1)},\pi^{(1)})$. A Rauzy class $\mathfrak{R}$  can be visualized in terms of a directed labeled graph, the \emph{Rauzy graph} (see \cite{SU}). Its vertices are in bijection with $\mathfrak{R}$ and it is formed by arrows that connect permutations which are obtained one from another by \eqref{eqiet6} and \eqref{eqiet8} labeled respectively by $0$ or $1$ according to the type of the induction.
  A \emph{path} $\gamma=(\gamma_1,...,\gamma_n)$ is a sequence of compatible arrows of the Rauzy graph, that is, such that the starting vertex of $\gamma_{i+1}$ is the ending vertex of $\gamma_{i}$, $i=1,...,n-1$. We say a path is \emph{closed} if the starting vertex of $\gamma_1$ is the ending vertex of $\gamma_n$. The set of all paths in this graph is denoted by $\Pi(\mathfrak{R})$.
  
\subsection{Rauzy Cocycle} We define \textit{Rauzy cocycle} as in \cite{AL}. A linear cocycle is a pair $(T,A)$, where $T:\mathfrak{X} \rightarrow \mathfrak{X}$ and $A:\mathfrak{X} \rightarrow GL(p,\R)$, viewed as a linear skew-product $(x,v)\rightarrow (T(x),A(x)\cdot v)$ on $\mathfrak{X} \times \R^p$, $p\in \N$. Notice that $(T,A)^n=(T^n,A^{(n)})$, where
\begin{equation*}\label{eqiet11}
A^{(n)}(x)=A(T^{n-1}(x))\cdot ... \cdot A(x), \quad n \geq 0.
\end{equation*}
In what follows we denote $SL({\mathcal{A}},\Z) \simeq SL(d,\Z)$. Let $\| \cdot \|$ denote a matrix norm on $SL({\mathcal{A}},\Z)$. Let $\log^+y= \max\{\log(y) ,0  \}$ for any $y>0$. If $(\mathfrak{X}, \mu)$ is a probability space, $\mu$ is an ergodic measure for $T$ and
$$  \int_{\mathfrak{X}} \log^+\| A(x) \| d \mu(x) < + \infty,$$
we say $(T,A)$ is a measurable cocycle.\\

Denote by $E_{\alpha \beta}$ the elementary matrix $(\delta_{i\alpha}\delta_{j\beta})_{i\geq 1, j\leq d}$. Let $\mathfrak{R}\subseteq \mathfrak{S}({\mathcal{A}})$ be a Rauzy class. To any given path $\gamma \in \Pi(\mathfrak{R})$ we associate a matrix $B_{\gamma} \in SL({\mathcal{A}},\Z)$, defined inductively as follows\vspace*{0.2cm}\\
i) If $\gamma$ is an arrow labeled by $0$, set $B_{\gamma}=\mathbf{1}_d+E_{\alpha(1) \alpha(0)}$, with $\mathbf{1}_d$ denoting the $d\times d$ identity matrix;\vspace*{0.2cm}\\
ii) If $\gamma$ is an arrow labeled by $1$, set $B_{\gamma}=\mathbf{1}_d+E_{\alpha(0) \alpha(1)}$;\vspace*{0.2cm}\\
iii) If  $\gamma=(\gamma_1,...,\gamma_n)$, set $B_{\gamma}=B_{\gamma_n}...B_{\gamma_1}$.\\

We denote by $\gamma(\lambda,\pi) \in \Pi(\mathfrak{R}(\pi))$, the arrow in the Rauzy graph starting at $\pi$ labeled by the type of $(\lambda,\pi)$.

Define the function $B_R: \R_+^{\mathcal{A}} \times \mathfrak{R}  \rightarrow SL({\mathcal{A}},\Z)$ such that $B_R(\lambda,\pi)=B_{\gamma(\lambda,\pi)}$. The \emph{Rauzy cocycle} is the linear cocycle over the Rauzy induction $(\mathcal{R},B_R)$ on $\R_+^{\mathcal{A}} \times \mathfrak{R} \times \R^{\mathcal{A}}$. Note that $(\mathcal{R},B_R)^n=(\mathcal{R}^n,B_R^{(n)})$, where
\begin{equation}\label{BR}
B_R^{(0)}(\lambda,\pi)=\mathbf{1}_d, \quad B_R^{(n)}(\lambda,\pi)= B(\lambda^{(n-1)},\pi^{(n-1)})\cdot...\cdot B(\lambda^{(1)},\pi^{(1)})\cdot B(\lambda,\pi),
\end{equation}
for $n\geq 1$, with $(\lambda^{(n)},\pi^{(n)})$ as in \eqref{Rauzindn}. Note that, we have
\begin{equation*}\label{lambdaBR}
\lambda= \left(B_R^{(n)}(\lambda,\pi) \right)^*\cdot \lambda^{(n)},
\end{equation*}
for all $n\geq0$, where $^*$ denotes the transpose operator.

We now stress an important property of the Rauzy cocycle (see \cite{Vi06}). For any $n\geq0$ and $x \in I^{(n)}$, let $r_{\lambda,\pi}^n(x)$ denote the first return time of $x$ by $f_{\lambda,\pi}$ to $I^{(n)}$. Note that $r_{\lambda,\pi}^n(x)$ is constant on each $I^{(n)}_{\alpha}$, for any $\alpha \in \mathcal{A}$. We write $r_{\lambda,\pi}^n(I^{(n)}_{\alpha})$  to mean $r_{\lambda,\pi}^n(x)$, for any $x \in I^{(n)}_{\alpha}$.

Each entry $\left(B_R^{(n)}(\lambda,\pi)\right)_{\alpha,\beta}$ of the matrix $B_R^{(n)}(\lambda,\pi)$ counts the number of visits of $I_{\alpha}^{(n)}$ to the interval $I_\beta$ up to the $r_{\lambda,\pi}^n(I^{(n)}_{\alpha})$-th iterate of $f_{\lambda,\pi}$.  That is for every $\alpha, \beta \in \mathcal{A}$ and every $n \geq 1$,
\begin{equation*}\label{timeBR}
\left(B_R^{(n)}(\lambda,\pi)\right)_{\alpha,\beta}= \textrm{card} \left\{0\leq j < r^{n}_{\lambda,\pi} (I_{\alpha}^{(n)}):f_{\lambda,\pi}(I_{\alpha}^{(n)})  \subset I_\beta   \right\}.
\end{equation*}

\subsection{Projection of the Rauzy cocycle on $\mathbb{T}^\mathcal{A}$}
Denote $\Z^{\mathcal{A}} \simeq \Z^{d}$ and let $\mathbb{T}^{\mathcal{A}} \simeq \mathbb{T}^{d}$ be the $d$-dimensional torus $\R^{\mathcal{A}} / 2\pi \Z^{\mathcal{A}}$.  Furthermore, let $p:\R^{\mathcal{A}} \rightarrow  \mathbb{T}^{\mathcal{A}}$ be the natural projection,
$$ p(v) = \left(  \left( v    \right)_{\alpha}\mod 2 \pi    \right)_{\alpha \in \mathcal{A}}, \quad \textrm{ for all} \ v \in \R^{\mathcal{A}}.$$
We sometimes denote $p(v)= v \mod 2 \pi$.

We introduce the \emph{projection of the Rauzy cocycle on $\mathbb{T}^{\mathcal{A}}$} as the application $B_{\mathbb{T}^{\mathcal{A}}} : \R_+^{\mathcal{A}} \times \mathfrak{R} \times \mathbb{T}^{\mathcal{A}} \rightarrow \mathbb{T}^{\mathcal{A}}$ such that $B_{\mathbb{T}^{\mathcal{A}}}(\lambda,\pi)\cdot \theta = p(B_{R}(\lambda,\pi) \cdot v)$ , for any $(\lambda,\pi) \in \R_+^{\mathcal{A}} \times \mathfrak{R}$, $n \geq 0$ and $\theta \in \mathbb{T}^{\mathcal{A}}$, with $v \in p^{-1}(\theta)$. Note that, as $B_{R}$ is an integral cocycle, for any $v, v' \in p^{-1}(\theta)$ we have $p(  B_{R}(\lambda,\pi) \cdot v) = p(  B_{R}(\lambda,\pi) \cdot v')$ and thus the map $B_{\mathbb{T}^{\mathcal{A}}}$ is well defined. We also denote
\begin{equation}\label{crauzytor}
B_{\mathbb{T}^{\mathcal{A}}}^{(n)}(\lambda,\pi) \cdot \theta = B_{R}^{(n)}(\lambda,\pi) \cdot v \mod 2\pi,
\end{equation}
for any $n \geq 0$ and $\theta \in \mathbb{T}^{\mathcal{A}}$, with $v \in p^{-1}(\theta)$.

\section{Breaking sequence}

In this section we define the \emph{breaking sequence}, a sequence of curves associated to IET parameters $(\lambda,\pi) \in \R_+^{\mathcal{A}} \times \mathfrak{S}({\mathcal{A}})$ and a rotational parameter $\theta \in \T^{\mathcal{A}}$ via the \emph{breaking operator}, an operator acting on the space of piecewise linear curves. We then relate the dynamics of a breaking sequence and that of the underlying IET.


Given $\ell>0$ we denote the class of continuous piecewise linear maps $\gamma:[0,\ell)\rightarrow \C$ such that all $x$ in the domain of differentiability of $\gamma$, satisfy $|\gamma(x)'|=1$ by $\mathcal{PL}(\ell)$.
Note that for any $\gamma \in \mathcal{PL}(\ell)$, its image $\gamma([0,\ell))$ has an arc length equal to $\ell$.




We say that a sequence of mutually disjoint intervals $\{J_n\}$ is an \emph{ordered sequence of disjoint intervals} if whenever $m<m'$, we have $x<x'$ for all $x \in J_{m}$ and $x' \in J_{m'}$.

We define an operator that acts on $\mathcal{PL}(\ell)$, which given a sequence of subintervals of its domain, takes a curve, and rotates by a fixed angle the pieces corresponding to these subintervals. Visually and informally the action of this operator is to rigidly "break" the curve at these segments.

Consider a map $\gamma \in \mathcal{PL}(\ell)$ a real number $\varphi \in [-\pi,\pi)$, and an ordered sequence of disjoint intervals $J=\{J_k\}_{0\leq k \leq r-1}$ of equal length $\Delta\in (0,\ell / r)$. We write $J_k=[y_k,y_k+\Delta)\subset \R$, where $y_k+\Delta\leq y_{k+1}$ and $k \in \{0,...,r-1\}$. We define the \textit{breaking operator} $\mathfrak{Br}(\varphi,J):\mathcal{PL}(\ell) \rightarrow \mathcal{B}([0,\ell),\C)$ as
\begin{equation}\label{breq1}
\mathfrak{Br}(\varphi,J)\cdot \gamma(x)=
\left\{
\begin{array}{ll}\vspace{0.2cm}
\gamma(x), & x \in [0,y_0),\\\vspace{0.2cm}
\gamma(x)\cdot e^{i\varphi}+\overline{\epsilon}_k(\varphi,J), & x \in [y_k,y_k+\Delta),\\
\gamma(x)+\underline{\epsilon}_k(\varphi,J), & x \in [y_{k}+\Delta,y_{k+1}),\\
\end{array}
\right.
\end{equation}
for $k \in \{0,...,r-1\}$, where $y_r=\ell$,
\begin{equation}\label{breq3}
\overline{\epsilon}_0=\gamma(x_0)(1-e^{i \varphi}), \quad \overline{\epsilon_k}=\gamma(y_k)(1-e^{i \varphi})+\underline{\epsilon}_{k-1},
\end{equation}
and also
\begin{equation}\label{breq2}
\underline{\epsilon}_0=(\gamma(y_0)-\gamma(y_0+\Delta))(1-e^{i \varphi}), \quad \underline{\epsilon}_k=\overline{\epsilon}_k-\gamma(y_k+\Delta)(1-e^{i \varphi}).
\end{equation}

We first need to  show that for all $\ell >0$ and $\varphi \in [-\pi,\pi)$, $\mathfrak{Br}(\varphi,J)$ maps $\mathcal{PL}(\ell)$ into a subset of $\mathcal{PL}(\ell)$. We do this in our next lemma.

\begin{lemma}\label{lPL}
	If $\ell>0$, $\gamma \in \mathcal{PL}(\ell)$, $\varphi \in [-\pi,\pi)$ and $J$ is an ordered sequence of disjoint subintervals of $[0, \ell)$ with length $\Delta>0$, then $\mathfrak{Br}(\varphi,J)(\mathcal{PL}(\ell)) \subseteq \mathcal{PL}(\ell)$.
\end{lemma}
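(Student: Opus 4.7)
The plan is to verify the three conditions defining $\mathcal{PL}(\ell)$ for the output curve $\tilde\gamma := \mathfrak{Br}(\varphi,J)\cdot \gamma$: (i) continuity on $[0,\ell)$, (ii) piecewise linearity, and (iii) unit speed at every point of differentiability. Since the first two properties, together with the relation $|\tilde\gamma'|=1$ wherever it exists, completely characterize membership in $\mathcal{PL}(\ell)$, establishing these three items suffices.

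Conditions (ii) and (iii) are essentially immediate from the definition \eqref{breq1}. On each of the three types of pieces appearing in the definition, namely $[0,y_0)$, the ``rotated'' intervals $[y_k,y_k+\Delta)$, and the ``translated'' intervals $[y_k+\Delta,y_{k+1})$, the operator acts by composing $\gamma$ with an isometry of $\mathbb{C}$ of the form $z\mapsto z$, $z\mapsto e^{i\varphi}z+\overline{\epsilon}_k$, or $z\mapsto z+\underline{\epsilon}_k$. Each such map is affine with linear part of modulus $1$, so it preserves both piecewise linearity and the unit-speed condition on the interior of every subinterval. Hence condition (iii) holds wherever $\tilde\gamma$ is differentiable, and $\tilde\gamma$ is piecewise linear on each subinterval of the partition of $[0,\ell)$ induced by $J$ together with the finitely many breakpoints of $\gamma$.

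The substantive step is condition (i), continuity at the endpoints $y_k$ and $y_k+\Delta$ of the intervals $J_k$. I would proceed by induction on $k\in\{0,\ldots,r-1\}$, treating the matching conditions at $y_k$ and at $y_k+\Delta$ in turn. At $y_0$, the left value is $\gamma(y_0)$ and the right value is $\gamma(y_0)e^{i\varphi}+\overline{\epsilon}_0$; substituting the definition of $\overline{\epsilon}_0$ in \eqref{breq3} gives agreement. At $y_0+\Delta$ one compares $\gamma(y_0+\Delta)e^{i\varphi}+\overline{\epsilon}_0$ with $\gamma(y_0+\Delta)+\underline{\epsilon}_0$; the equality is equivalent to $\underline{\epsilon}_0=(\gamma(y_0)-\gamma(y_0+\Delta))(1-e^{i\varphi})$, which is precisely \eqref{breq2}. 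The inductive step is identical: assuming the piece on $[y_{k-1}+\Delta,y_k)$ is continuously glued to its predecessor, the recursions $\overline{\epsilon}_k=\underline{\epsilon}_{k-1}+\gamma(y_k)(1-e^{i\varphi})$ and $\underline{\epsilon}_k=\overline{\epsilon}_k-\gamma(y_k+\Delta)(1-e^{i\varphi})$ encode exactly the matching conditions at $y_k$ and $y_k+\Delta$.

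The only mild obstacle is bookkeeping: one must carefully check that the two families of offsets $\overline{\epsilon}_k,\underline{\epsilon}_k$ are precisely those forced by demanding continuity across the boundaries of the $J_k$, and that the last interval $[y_{r-1}+\Delta,\ell)$ requires no further constraint (it only needs the offset $\underline{\epsilon}_{r-1}$ already defined). Once this is verified, $\tilde\gamma$ is continuous on $[0,\ell)$, piecewise linear, and of unit speed on each piece, so $\tilde\gamma\in\mathcal{PL}(\ell)$ and the inclusion $\mathfrak{Br}(\varphi,J)(\mathcal{PL}(\ell))\subseteq\mathcal{PL}(\ell)$ follows.
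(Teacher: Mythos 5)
Your proof is correct and follows the same overall plan as the paper (verify continuity, piecewise linearity, and unit speed), with the pleasant difference that you actually carry out the continuity check at each $y_k$ and $y_k+\Delta$ — which is precisely what makes the definitions \eqref{breq3}--\eqref{breq2} work — whereas the paper dismisses this as clear and only spells out the unit-speed condition via one-sided derivatives. Your observation that each piece is $\gamma$ post-composed with an isometry of $\mathbb{C}$ subsumes the paper's left/right-derivative computation cleanly, so no issues here.
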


\begin{proof}

Let $\gamma \in \mathcal{PL}(\ell)$. It is clear that $\mathfrak{Br}(\varphi,J)\cdot \gamma$ is  piecewise linear  and  continuous. In particular, it is semi-differentiable. Denote by $\partial_{-}$ and $\partial_{+}$ its left and right derivative, respectively.

 Given $x \in (0,\ell)$ we have
 $$|\partial_{-} \left( \mathfrak{Br}(\varphi,J)\cdot \gamma\right)(x)|=|\partial_{+} \left( \mathfrak{Br}(\varphi,J)\cdot \gamma\right)(x)|=|\partial_{+}\gamma(x)|.$$
 Since $\gamma \in \mathcal{PL}(\ell)$, $|\partial_{+}\gamma'(x)|=1$ and hence, if $\mathfrak{Br}(\varphi,J)\cdot \gamma$ is differentiable at $x$ we must have $|\left( \mathfrak{Br}(\varphi,J)\cdot \gamma\right)'(x)|=1$. This finishes our proof.

\end{proof}

We will later need the estimate in the next lemma.

\begin{lemma}\label{brth1}
	Let $\ell>0$, $\gamma \in \mathcal{PL}(\ell)$, $\varphi \in [-\pi,\pi)$, $\Delta<\ell$ be a positive constant and $J$ be an ordered sequence of disjoint intervals of length $\Delta$. For all $k \in \N$ we have
	\begin{equation*}\label{breq4a}
	\max\left(|\overline{\epsilon}_k|,|\underline{\epsilon}_k|\right) \leq  2\ell \sin\left|\frac{\varphi}{2}\right|.
	\end{equation*}	
\end{lemma}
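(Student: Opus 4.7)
The plan is to derive closed-form telescoping expressions for $\underline{\epsilon}_k$ and $\overline{\epsilon}_k$ from the recurrences \eqref{breq3} and \eqref{breq2}, then estimate them using that every $\gamma \in \mathcal{PL}(\ell)$ is $1$-Lipschitz. The factor $2\sin|\varphi/2|$ in the bound will come from the elementary identity $|1-e^{i\varphi}|=2\sin|\varphi/2|$.

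First I would substitute \eqref{breq3} into \eqref{breq2} to obtain the first-order recurrence $\underline{\epsilon}_k - \underline{\epsilon}_{k-1} = (\gamma(y_k) - \gamma(y_k+\Delta))(1-e^{i\varphi})$. Telescoping from the base case yields
$$\underline{\epsilon}_k = (1-e^{i\varphi})\sum_{j=0}^{k}\bigl(\gamma(y_j) - \gamma(y_j+\Delta)\bigr).$$
Each summand is the displacement of $\gamma$ across the interval $J_j$ of arc length $\Delta$, and since the $J_j$'s are pairwise disjoint subintervals of $[0,\ell)$ one has $(k+1)\Delta \leq \ell$. The triangle inequality together with the Lipschitz bound $|\gamma(y_j)-\gamma(y_j+\Delta)|\leq \Delta$ then produces the desired estimate for $|\underline{\epsilon}_k|$.

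For $\overline{\epsilon}_k$, plugging the closed form of $\underline{\epsilon}_{k-1}$ into \eqref{breq3} and regrouping the resulting sum telescopically gives
$$\overline{\epsilon}_k = (1-e^{i\varphi})\Bigl[\gamma(y_0) + \sum_{j=1}^{k}\bigl(\gamma(y_j) - \gamma(y_{j-1}+\Delta)\bigr)\Bigr].$$
The key observation is that the summands $\gamma(y_j) - \gamma(y_{j-1}+\Delta)$ are now displacements of $\gamma$ along the \emph{gap} intervals $[y_{j-1}+\Delta,y_j]$, which are pairwise disjoint from one another and from every $J_j$. Using the natural normalization $\gamma(0)=0$ (preserved by $\mathfrak{Br}(\varphi,J)$, which acts as the identity on $[0,y_0)$, so that iterating the operator starting from the identity curve keeps this property), the boundary term satisfies $|\gamma(y_0)|\leq y_0$, and the total arc-length bound becomes $y_0 + (y_k - y_0 - k\Delta) = y_k - k\Delta \leq \ell$. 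A second application of the triangle inequality then yields the estimate.

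The main subtlety is finding the regrouping of the sum defining $\overline{\epsilon}_k$ so that it becomes indexed over \emph{disjoint} arcs of $\gamma$; once this is achieved, the $1$-Lipschitz property of $\gamma$ and the disjointness of the intervals in $J$ complete the argument, both estimates fitting inside a single total-arc-length budget of $\ell$.
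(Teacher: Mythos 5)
Your proof is correct and takes a genuinely different (and in fact tighter) route than the paper. Both proofs begin from the same first-order recurrences and telescope, but you regroup the sum
$$\gamma(y_k) + \sum_{l=0}^{k-1}\bigl(\gamma(y_l)-\gamma(y_l+\Delta)\bigr)
= \gamma(y_0) + \sum_{j=1}^{k}\bigl(\gamma(y_j)-\gamma(y_{j-1}+\Delta)\bigr)$$
so that every displacement is taken across a \emph{pairwise disjoint} family of arcs: the initial piece $[0,y_0]$ (using the normalization $\gamma(0)=0$) and the gaps $[y_{j-1}+\Delta,y_j]$. This gives the total-arc-length budget $y_0 + \sum_{j=1}^k (y_j - y_{j-1}-\Delta) = y_k - k\Delta \le \ell$, hence $|\overline{\epsilon}_k| \le |1-e^{i\varphi}|\,\ell = 2\ell\,|\sin(\varphi/2)|$ exactly as claimed. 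The paper instead keeps the grouping $|\gamma(y_k)| + \sum_{l<k}|\gamma(y_l)-\gamma(y_l+\Delta)|$, in which the arc $[0,y_k]$ overlaps all the $J_l$, so the total length is only bounded by roughly $y_k + k\Delta \lesssim 2\ell$; the paper then reaches the stated constant by asserting $|1-e^{i\varphi}| = |\sin(\varphi/2)|$, which is off by a factor of two (the correct identity is $|1-e^{i\varphi}|=2|\sin(\varphi/2)|$). Your disjoint regrouping is precisely what makes the arithmetic close at the advertised constant; you gain a factor of two that the paper silently loses. One point worth flagging in both proofs: the lemma as stated requires some normalization such as $\gamma(0)=0$ for the term $|\gamma(y_0)|$ (respectively $|\gamma(y_k)|$) to be controlled, since adding a large constant to $\gamma$ keeps it in $\mathcal{PL}(\ell)$ but blows up $\overline{\epsilon}_k$. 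You note this explicitly and justify it from the inductive construction of the breaking sequence; the paper leaves it implicit.
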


\begin{proof}
	Let $r$ be the number of subintervals in $J$ and denote $J=\{[y_k,y_k+\Delta)\}_{0\leq k<r}$. 	
	
	By inserting \eqref{breq2} in \eqref{breq3} it is clear that, for any $1\leq k<r$, we have
	$$  \overline{\epsilon}_{k}= (\gamma(y_k)  - \gamma(y_{k-1}+\Delta) )(1-e^{i\varphi}) + \overline{\epsilon}_{k-1}, $$
	and applying the triangle inequality we get, for any $1\leq k<r$, that
	$$ |\overline{\epsilon}_{k}|\leq |1-e^{i\varphi}| \left[  |\gamma(y_k)| + \sum_{l=0}^{k-1}|\gamma(y_l)-\gamma(y_{l}+\Delta)|    \right]. $$

	As $|1-e^{i\varphi}| = |\sin(\varphi/2)|$, $y_k\leq \ell-(r-k)\Delta$ and
	$ |\gamma(y_l)-\gamma(y_{l}+\Delta)| \leq \gamma([y_k,y_k+\Delta))\leq \Delta  $ we get
	as $r \Delta\leq \ell$ 
	$$ |\overline{\epsilon}_k| \leq  2\ell \sin\left|\frac{\varphi}{2}\right|   . $$

	It is also clear from \eqref{breq3} and \eqref{breq2}   applying the triangle inequality 
	that for any $1\leq k<r$ we have 
	$$ |\underline{\epsilon}_{k}|\leq |1-e^{i\varphi}| \sum_{l=0}^{k}|\gamma(y_l)-\gamma(y_{l}+\Delta)|, $$
	and in a similar way as before we can prove that $ |\underline{\epsilon}_{k}|\leq k \Delta\sin|\varphi/2| \leq \ell \sin\left|\frac{\varphi}{2}\right|.$ 
	
\end{proof}

Let $\mathcal{J}$ be a collection of mutually disjoint intervals. We say an ordered sequence of intervals $\{J_n\}$ is an \emph{ordering} of $\mathcal{J}$ if for all $J \in \mathcal{J}$ there is a unique $m$ such that $J_m=J$. Note that if $\mathcal{J}$ is a finite collection then it has a unique ordering.


Given $(\lambda,\pi) \in \R_+^{\mathcal{A}} \times \mathfrak{S}({\mathcal{A}})$, consider the collection of sets
\begin{equation}\label{Jn}
\mathcal{J}(n)=\left\{   f_{\lambda,\pi}^k\left(I^{(n-1)}\backslash I^{(n)}\right)   \right\}_{0\leq k<r(n-1)},
\end{equation}
where $r(n-1) = r_{\lambda,\pi}^{n-1}\left(I_{\beta_{0,n-1}}^{(n-1)}\right)$ and 
$$ \beta_{\varepsilon,m}=\left(   \pi_{\varepsilon}^{(m)}  \right)^{-1}(d).$$ 
It is clear that for all $n\geq 1$,  $r(n-1)$ is equal to the smallest $r\geq 1$ such that $f_{\lambda,\pi}^k(I^{(n-1)}\backslash I^{(n)})\subset I^{(n)}$.
We denote by $J^{(n)}=\{  J_k^{(n)}   \}_{0\leq k <r(n-1)}$ be the ordering of $\mathcal{J}(n)$, for all $n \geq 1$.

Recall the projection of the Rauzy cocycle in $\T^{\mathcal{A}}$. Given $\theta \in \T^{\mathcal{A}}$ let
\begin{equation}\label{thetan}
\theta^{(0)}=\theta, \quad \theta^{(n)}=B_{\T^{\mathcal{A}}}^{(n)}(\lambda,\pi)\cdot \theta,
\end{equation}

We define the \textit{breaking sequence} as a sequence of piecewise linear curves $\{\gamma_{\theta}^{(n)}(x)\} \in \mathcal{PL}(\ell)$, such that
\begin{equation}\label{gamman}
\begin{array}{l}\vspace*{0.2cm}
\gamma_{\theta}^{(0)}(x)= x ,\\
\gamma_{\theta}^{(n)}(x)= \mathfrak{Br}\left( \theta_{\beta_{1,n-1}}^{(n-1)},  J^{(n)}\right) \cdot \gamma_{\theta}^{(n-1)}(x),
\end{array}
\end{equation}
for all $x \in [0,|\lambda|)$ and $n\geq1$.

Each map is a parametrization of a curve and is obtained by successively applying the breaking operator with angles $\theta_{\beta_{1,n-1}}^{(n-1)}$ and segments $J^{(n)}$. Note that the number of these segments will increase while their lengths will decrease as $n\rightarrow +\infty$. In this way this sequence of curves is related both to the IET $f_{\lambda,\pi}$ and to a PWI with rotation vector $\theta$.

Denote by $\Theta_{\lambda,\pi}$ the set of all $\theta \in \T^{\mathcal{A}}$ such that for all $n \geq 0$, $\gamma_{\theta}^{(n)}:I \rightarrow \C$ is an injective map. Throughout the rest of his paper we will consider $\gamma^{(n)} = \gamma_{\theta}^{(n)}$ with $\theta \in \Theta_{\lambda,\pi}$.


~

The monodromy invariant of the permutation $\pi^{(m)}$ is the bijection ${\tilde \pi}^{(m)}:   \{ 1,\cdots, d \} \rightarrow \{ 1,\cdots, d \}$ such that
${\tilde \pi}^{(m)}= \pi_1^{(m)} \circ (\pi_0^{(m)})^{-1}$. We denote its inverse by
${\hat \pi}^{(m)}= \pi_0^{(m)} \circ (\pi_1^{(m)})^{-1}.$
We write
\begin{equation}\label{x0x1}
\begin{array}{ll}\vspace*{0.2cm}
x_{\varepsilon,j}^{(m)}= \sum_{\pi_{\varepsilon}^{(m)} (\alpha) \leq j} \lambda_{\alpha}^{(m)},
\end{array}
\end{equation}
for $\varepsilon=0,1$, where $x_{0,j}^{(m)}$ denotes the $j$-th endpoint of the partition associated to to $f_{\lambda^{(m)}, \pi^{(m)} }$, this is $\{I_{\alpha}^{(m)}\}_{\alpha \in \mathcal{A}}$, and $x_{1,j}^{(m)}$ denotes the $j$-th endpoint of the image of this partition by  $f_{\lambda^{(m)}, \pi^{(m)} }$. Furthermore we denote their image by $\gamma^{(n)}$  as $\gamma_{\varepsilon, j}^{n,m} = \gamma^{(n)} \left( x_{\varepsilon,j}^{(m)}  \right).$


We may now define points $\xi_j^{n,m} \in \C$ recursively as follows
	\begin{equation}\label{xis}
	\begin{array}{ll}\vspace*{0.2cm}
	\xi_d^{n,m} = \gamma_{0,d}^{n,m}, \\
	\xi_j^{n,m}= e^{i \theta^{(m)}_{ (\pi_1^{(m)})^{-1} (j+1)  }}   \left( \gamma^{n,m}_{0,  {\hat \pi}^{(m)}(j+1) -1  }   -  \gamma^{n,m}_{0,  {\hat \pi}^{(m)}(j+1) }   \right)+\xi_{j+1}^{n,m}.
	\end{array}
	\end{equation}
	For all $\alpha \in {\mathcal A}$, $n \in {\mathbb N}$, $0\leq m \leq n$ and $z \in \C$,  we define a map,
	\begin{equation}\label{hatTnm}
		{\hat T}_{\alpha}^{(n,m)} (z)=  e^{ i \theta_{\alpha}^{(m)} }   \left( z - \gamma_{0, \pi_0^{(m)} (\alpha)  }^{n,m}   \right)   +   \xi_{ \pi_1^{(m)} (\alpha) }^{n,m}.
	\end{equation}
	
The isometries $\hat{T}_{\alpha}^{(n,m)}$ act on the segments $\gamma^{(n)}(I_{\alpha}^{(m)})$ by rearranging their order according to the permutation $\pi^{(m)}$, via rotations by angles $\theta_{\alpha}^{(m)}$.
The right endpoint $\gamma_{0,\hpi^{(m)}(d)}^{n,m}$ of the segment $\gamma^{(n)}(I_{\beta_{1,m}}^{(m)})$ is mapped to the right endpoint $\xi_{d}^{n,m}$ of $\gamma^{(n)}(I^{(m)})$. For $j<d$, the right endpoint $\gamma_{0,\hpi^{(m)}(j)}^{n,m}$ of $\gamma^{(n)}(I_{\hpi^{(m)}(j)}^{(m)})$ is mapped to the left endpoint $\xi_{j}^{n,m}$ of the image by $\hat{T}_{\hpi^{(m)}(j+1)}^{(n,m)}$ of $\gamma^{(n)}(I_{\hpi^{(m)}(j+1)}^{(m)})$.
In this way, the union over $\alpha \in \mathcal{A}$, of all $\hat{T}_{\alpha}^{(n,m)}(\gamma^{(n)}(I_{\alpha}^{(m)}))$ is a continuous curve which \textit{a priori} may not coincide with $\gamma^{(n)}(I^{(m)})$.

	We also define inductively a map $T_{\alpha}^{(n,m)}$ as follows:
	\begin{equation}\label{Tnn}
	T_{\alpha}^{(n,n)} (z)=  {\hat T}_{\alpha}^{(n,n)} (z).
	\end{equation}
	For $z \in \C$, $0 < m \leq n$, if $\varepsilon (m-1) = 0$ then
	\begin{equation}\label{Tnm0}
	T_{\alpha}^{(n,m-1)} (z)= \begin{cases}\vspace*{0.2cm}
	T_{\alpha}^{(n,m)} (z), \quad \alpha \neq \beta_{1,m-1} ,\\
	\left( T_{  \beta_{0,m-1}   }^{(n,m)} \right)^{-1} \circ T_{\alpha}^{(n,m)} (z), \quad \alpha = \beta_{1,m-1},
	\end{cases}
	\end{equation}
	if $\varepsilon (m-1) = 1$ then
	\begin{equation}\label{Tnm1}
	T_{\alpha}^{(n,m-1)} (z)= \begin{cases}\vspace*{0.2cm}
	T_{\alpha}^{(n,m)} (z)  ,\quad \alpha \neq \beta_{0,m-1}, \\
	T_{\alpha}^{(n,m)}   \circ  \left( T_{ \beta_{1,m-1}     }^{(n,m)} \right)^{-1}(z) ,\quad \alpha = \beta_{0,m-1}.
	\end{cases}
	\end{equation}
	
	Finally, we define a map $T^{(n,m)} : \gamma^{(n)} \left( I^{(m)}  \right) \rightarrow {\mathbb C}$ by
	\begin{equation*}\label{themap}
	T^{(n,m)} (z) = T_{\alpha}^{(n,m)} (z), \quad z \in \gamma^{(n)} \left( I_{\alpha}^{(m)} \right).
	\end{equation*}
	
	To understand the rationale behind the inductive procedure used to define $T^{(n,m)}$, consider first the map $f_{\lambda^{(m)},\pi^{(m)},\alpha}:\R\rightarrow \R$ such that $f_{\lambda^{(m)},\pi^{(m)},\alpha}(x)= x+\upsilon_{\alpha}^{(m)}$.
	If $\theta=0$, by the definition of breaking sequence note that$ \gamma_{\theta}^{(n)}(x)=x$, for all $x \in I$ and $n\geq 0$.
	Consequently, we have $\gamma_{\varepsilon, j }^{n,m}=x_{\varepsilon, j }^{(m)}$, $\xi_{j }^{n,m}=x_{1, j }^{(m)}$ and thus, for all $z \in\C$, we have
	$$  \hat{T}_{\alpha}^{(n,m)}(z) =  f_{\lambda^{(m)},\pi^{(m)},\alpha}(\Re(z)) + i \Im(z)  .$$
	For $0<m\leq n$ and $\varepsilon(m-1)=0$, \eqref{Tnm0} gives
	\begin{equation*}\label{}
	f_{\lambda^{(m-1)},\pi^{(m-1)},\alpha} (\Re(z))= \begin{cases}\vspace*{0.2cm}
	f_{\lambda^{(m)},\pi^{(m)},\alpha} (\Re(z)), \quad \alpha \neq \beta_{1,m-1} ,\\
	f_{\lambda^{(m)},\pi^{(m)},\beta_{0,m-1}}^{-1} \circ f_{\lambda^{(m)},\pi^{(m)},\alpha} (\Re(z)), \quad \alpha = \beta_{1,m-1},
	\end{cases}
	\end{equation*}
	and as $f_{\lambda^{(m)},\pi^{(m)}}(x)= f_{\lambda^{(m)},\pi^{(m)},\alpha}(x)$, when $x \in I_{\alpha}^{(m)}$, these identities can be easily verified to be equivalent to Rauzy induction in this case.
	An analogous set of identities can also be obtained for the case $\varepsilon(m-1)=1$. Also note that for this example we have $\hat{T}_{\alpha}^{(n,m)}=T_{\alpha}^{(n,m)}$. This is no coincidence and indeed later we will prove that this identity holds in general.
	In this way \eqref{Tnm0} and \eqref{Tnm1} are a generalization of Rauzy induction and hence $\{T^{(n,m)} \}_{n \geq 0}$ is a sequence of maps defined on $\gamma^{(n)}(I^{(m)})$ which preserves this inductive structure.
	
	For the remainder of this section we state and prove several lemmas which serve as technical tools for our next section where we explore the relation between $\hat{T}_{\alpha}^{(n,m)}$, $T_{\alpha}^{(n,m)}$, $\gamma^{(n)}$ and $f_{\lambda^{(m)},\pi^{(m)}}$.
	The following lemma gives useful expressions for compositions of $\hat{T}_{\alpha}^{(n,m)}$ which are related to inductive procedure used to define $T_{\alpha}^{(n,m)}$.


	\begin{lemma}\label{LA}
	For all $n\geq 1$, $0< m \leq n$ and $z \in \C$ if $\varepsilon(m-1)=0$ then	
	\begin{equation*}\label{A1'}
	\left( {\hat T}_{ \beta_{0,m-1}  }^{(n,m)} \right)^{-1} \circ {\hat T}_{\beta_{1,m-1}}^{(n,m)} (z) = e^{i\theta_{\beta_{1,m-1}}^{(m-1)}}\left( z - \gamma_{0,{\hat  \pi}^{(m-1)}(d)-1}^{n,m-1}    \right)   +  \gamma_{1,d-1}^{n,m-1}  .
	\end{equation*}
	 and if  $\varepsilon(m-1)=1$ then	
	\begin{equation*}\label{A2'}
	 {\hat T}_{ \beta_{0,m-1}  }^{(n,m)}  \circ \left( {\hat T}_{\beta_{1,m-1}}^{(n,m)} \right)^{-1}(z) = e^{i\theta_{\beta_{0,m-1}}^{(m-1)}}\left( z -  \gamma_{0,d-1}^{n,m-1}   \right)  + \xi_{{\tilde  \pi}^{(m-1)}(d)-1}^{n,m} .
	\end{equation*}
	\end{lemma}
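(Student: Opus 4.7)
The strategy for proving Lemma \ref{LA} is a direct computation in each case (type $0$ and type $1$), carefully unpacking the definition of $\hat T^{(n,m)}_\alpha$ in \eqref{hatTnm}, the recursive definition of the points $\xi^{n,m}_j$ in \eqref{xis}, and the Rauzy induction rules \eqref{eqiet6}--\eqref{eqiet9} at the step $m-1 \to m$. Since both $\hat T^{(n,m)}_{\beta_{0,m-1}}$ and $\hat T^{(n,m)}_{\beta_{1,m-1}}$ are affine isometries of $\C$ (a rotation followed by a translation), their composition is again an affine isometry; the substance of the lemma is that the rotational part and the translational part collapse to the claimed expressions in terms of level $m-1$ data.

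\textbf{Type $0$ case.} First I would record the relevant consequences of a type $0$ induction from $\pi^{(m-1)}$ to $\pi^{(m)}$: the top row is unchanged, so $\pi_0^{(m)}=\pi_0^{(m-1)}$ and in particular $\pi_0^{(m)}(\beta_{0,m-1})=d$ and $\pi_0^{(m)}(\beta_{1,m-1})=\hpi^{(m-1)}(d)$; in the bottom row, $\beta_{1,m-1}$ is inserted immediately after $\beta_{0,m-1}$, so if $k:=\pi_1^{(m)}(\beta_{0,m-1})$ then $\pi_1^{(m)}(\beta_{1,m-1})=k+1$ and $(\pi_1^{(m)})^{-1}(k+1)=\beta_{1,m-1}$. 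The corresponding Rauzy cocycle update $B_R=\mathbf{1}_d+E_{\beta_{1,m-1}\,\beta_{0,m-1}}$ together with \eqref{crauzytor} yields $\theta^{(m)}_\alpha=\theta^{(m-1)}_\alpha$ for $\alpha\neq \beta_{1,m-1}$ and $\theta^{(m)}_{\beta_{1,m-1}}=\theta^{(m-1)}_{\beta_{1,m-1}}+\theta^{(m-1)}_{\beta_{0,m-1}}$. Finally, since $\lambda^{(m)}_\alpha=\lambda^{(m-1)}_\alpha$ for $\alpha\neq \beta_{0,m-1}$ and $\pi_0^{(m)}(\beta_{0,m-1})=d$, the identity \eqref{x0x1} gives $x_{0,j}^{(m)}=x_{0,j}^{(m-1)}$ for every $j<d$, and $x_{0,d}^{(m)}=|\lambda^{(m)}|=|\lambda^{(m-1)}|-\lambda^{(m-1)}_{\beta_{1,m-1}}=x_{1,d-1}^{(m-1)}$; applying $\gamma^{(n)}$ gives $\gamma^{n,m}_{0,j}=\gamma^{n,m-1}_{0,j}$ for $j<d$ and $\gamma^{n,m}_{0,d}=\gamma^{n,m-1}_{1,d-1}$.

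With this bookkeeping, substituting \eqref{hatTnm} into $(\hat T^{(n,m)}_{\beta_{0,m-1}})^{-1}\circ \hat T^{(n,m)}_{\beta_{1,m-1}}(z)$ produces
\begin{equation*}
e^{i\theta^{(m-1)}_{\beta_{1,m-1}}}\bigl(z-\gamma^{n,m}_{0,\hpi^{(m-1)}(d)}\bigr)+e^{-i\theta^{(m-1)}_{\beta_{0,m-1}}}\bigl(\xi^{n,m}_{k+1}-\xi^{n,m}_{k}\bigr)+\gamma^{n,m}_{0,d}.
\end{equation*}
The key simplification is the recurrence \eqref{xis} applied at level $j=k$: since $(\pi_1^{(m)})^{-1}(k+1)=\beta_{1,m-1}$ and $\hpi^{(m)}(k+1)=\pi_0^{(m-1)}(\beta_{1,m-1})=\hpi^{(m-1)}(d)$, this rewrites $\xi^{n,m}_{k}-\xi^{n,m}_{k+1}$ as $e^{i(\theta^{(m-1)}_{\beta_{1,m-1}}+\theta^{(m-1)}_{\beta_{0,m-1}})}\bigl(\gamma^{n,m}_{0,\hpi^{(m-1)}(d)-1}-\gamma^{n,m}_{0,\hpi^{(m-1)}(d)}\bigr)$. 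After the factor $e^{-i\theta^{(m-1)}_{\beta_{0,m-1}}}$ is absorbed, the two shifted evaluations of $\gamma$ combine and, using $\gamma^{n,m}_{0,\hpi^{(m-1)}(d)-1}=\gamma^{n,m-1}_{0,\hpi^{(m-1)}(d)-1}$ (as $\hpi^{(m-1)}(d)-1<d$) and $\gamma^{n,m}_{0,d}=\gamma^{n,m-1}_{1,d-1}$, the expression reduces to exactly $e^{i\theta^{(m-1)}_{\beta_{1,m-1}}}\bigl(z-\gamma^{n,m-1}_{0,\hpi^{(m-1)}(d)-1}\bigr)+\gamma^{n,m-1}_{1,d-1}$.

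\textbf{Type $1$ case.} I would proceed symmetrically. A type $1$ induction preserves the bottom row, so $\pi_1^{(m)}=\pi_1^{(m-1)}$, while $\beta_{0,m-1}$ is inserted immediately after $\beta_{1,m-1}$ in the top; the cocycle update becomes $\theta^{(m)}_{\beta_{0,m-1}}=\theta^{(m-1)}_{\beta_{0,m-1}}+\theta^{(m-1)}_{\beta_{1,m-1}}$, and the analogous $\lambda$-relations yield $x_{0,d-1}^{(m)}=x_{0,d-1}^{(m-1)}-\lambda^{(m-1)}_{\beta_{0,m-1}}$ and $\pi_0^{(m)}(\beta_{0,m-1})=\tpi^{(m-1)}(d)$, $\pi_1^{(m)}(\beta_{0,m-1})=\tpi^{(m-1)}(d)-1$ (reading off \eqref{eqiet8}). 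Writing out $\hat T^{(n,m)}_{\beta_{0,m-1}}\circ(\hat T^{(n,m)}_{\beta_{1,m-1}})^{-1}(z)$ and using the recurrence \eqref{xis} at the index $j=\tpi^{(m-1)}(d)-1$ to collapse the constants gives the announced form. The main obstacle in both cases is purely combinatorial bookkeeping: matching the level $m$ indices produced by the definitions of $\hat T^{(n,m)}_\alpha$ and $\xi^{n,m}_j$ against the level $m-1$ quantities that appear on the right-hand side, by invoking the correct Rauzy induction rule for the permutation, lengths, and rotation vector.
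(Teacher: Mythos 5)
Your proposal follows essentially the same route as the paper: expand the affine maps $\hat T^{(n,m)}_{\beta_{0,m-1}}$ and $\hat T^{(n,m)}_{\beta_{1,m-1}}$, record how the Rauzy move relates the level-$m$ and level-$(m-1)$ permutations, lengths and rotation angles (the paper's \eqref{A4}, \eqref{A6a} and \eqref{A7}), and collapse the translational part via the recursion \eqref{xis} at the appropriate index ($j=\pi_1^{(m)}(\beta_{0,m-1})$ in type $0$, $j={\tilde\pi}^{(m-1)}(d)-1$ in type $1$, giving the paper's \eqref{A3} and \eqref{A6}). Your type $0$ computation is correct in every detail.

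In the type $1$ paragraph, however, two of the auxiliary index identities you record are wrong. Since a type $1$ move leaves the bottom row unchanged, $\pi_1^{(m)}(\beta_{0,m-1})=\pi_1^{(m-1)}(\beta_{0,m-1})={\tilde\pi}^{(m-1)}(d)$, not ${\tilde\pi}^{(m-1)}(d)-1$; and reading \eqref{eqiet8} with $k=\pi_0^{(m-1)}(\beta_{1,m-1})={\hat\pi}^{(m-1)}(d)$ gives $\pi_0^{(m)}(\beta_{0,m-1})={\hat\pi}^{(m-1)}(d)+1$, which has no reason to equal ${\tilde\pi}^{(m-1)}(d)$. Likewise $x_{0,d-1}^{(m)}=|\lambda^{(m)}|-\lambda^{(m)}_{\beta_{0,m}}$, and the relevant endpoint identity needed is $x_{0,d}^{(m)}=x_{0,d-1}^{(m-1)}$, not the one you wrote. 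None of this undermines the strategy — the only relation actually needed is $\pi_0^{(m)}(\beta_{1,m-1})=\pi_0^{(m)}(\beta_{0,m-1})-1$ together with the $\xi$ recursion at $j={\tilde\pi}^{(m-1)}(d)-1$, which you do identify — but a reader carrying out the computation from your stated identities would derive a wrong term and stall.
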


	\begin{proof}
	Assume first  $\varepsilon(m-1)=0$. It is clear that 
$\pi_{0}^{(m-1)}= \pi_{0}^{(m)}$, $\pi_{1}^{(m)}(\beta_{1,m-1})= \pi_{1}^{(m)}(\beta_{0,m})+1$ and we get	
	\begin{equation}\label{A3}
	\xi_{\pi_{1}^{(m)}(\beta_{1,m-1})}^{n,m} - \xi_{\pi_{1}^{(m)}(\beta_{0,m-1})}^{n,m} = e^{i\theta_{\beta_{1,m-1}}^{(m)}} \left( \gamma_{0, {\hat \pi}^{(m-1)}(d)}^{n,m} - \gamma_{0, {\hat \pi}^{(m-1)}(d)-1}^{n,m}  \right).
	\end{equation}
	directly from the definition of $\xi_{j}^{n,m}$ with $j=\pi_{1}^{(m)}(\beta_{0,m})$.
	

	From \eqref{thetan} we can write
	\begin{equation}\label{A4}
	\theta_{\alpha}^{(m)}=\left\{ 
		\begin{array}{ll}\vspace*{0.2cm}
		\theta_{\alpha}^{(m-1)}, & \alpha \neq \beta_{1,m-1},\\
		\theta_{\beta_{1,m-1}}^{(m-1)}+ \theta_{\beta_{0,m-1}}^{(m-1)}, & \alpha = \beta_{1,m-1},
		\end{array}	\right.
	\end{equation}	
	
Now, since for any $j <d$ we have $\gamma_{0,j}^{n,m-1}=\gamma_{0,j}^{n,m}$, from the above relations using \eqref{hatTnm} we prove our lemma in this case.


	Now assume  $\varepsilon(m-1)=1$. It is cleat that 
	$\pi_{1}^{(m-1)}= \pi_{1}^{(m)}$
	and
	$\pi_{0}^{(m)}(\beta_{1,m-1})= \pi_{0}^{(m)}(\beta_{0,m-1})-1.$
	With $j = {\tilde \pi}^{(m-1)}(d)-1$,  it is straightforward from the definition of $\xi_j^{n,m}$ that	
	\begin{equation}\label{A6}
	\xi_{{\tilde \pi}^{(m-1)}(d)-1}^{n,m} = e^{i\theta_{\beta_{0,m-1}}^{(m)}} \left( \gamma_{0,\pi_{0}^{(m)}(\beta_{1,m-1})}^{n,m} - \gamma_{0,\pi_{0}^{(m)}(\beta_{0,m-1})}^{n,m}  \right) + \xi_{{\tilde \pi}^{(m-1)}(d)}^{n,m}.
	\end{equation}
	
	By \eqref{eqiet9} and \eqref{x0x1} we have
	\begin{equation}\label{A6a}
	\gamma_{0,j}^{n,m-1}=  \left\{
	\begin{array}{ll}\vspace*{0.2cm}
	\gamma_{0,j}^{n,m} , & 0\leq j < {\hat \pi}^{(m)}(d),\\
	\gamma_{0,j+1}^{n,m} , & {\hat \pi}^{(m)}(d) \leq j < d,
	\end{array}\right.
	\end{equation}
	which in particular, by \eqref{xis} gives $\gamma_{0,d-1}^{n,m-1}=\xi_{d}^{n,m}$. Also, by \eqref{thetan} we have
	\begin{equation}\label{A7}
	\theta_{\alpha}^{(m)}=\left\{ 
	\begin{array}{ll}\vspace*{0.2cm}
	\theta_{\alpha}^{(m-1)}, & \alpha \neq \beta_{0,m-1},\\
	\theta_{\beta_{0,m-1}}^{(m-1)}+ \theta_{\beta_{1,m-1}}^{(m-1)}, & \alpha = \beta_{0,m-1},
	\end{array}	\right.
	\end{equation}

The second statement in the lemma follows from combining this with \eqref{A6} using 	the definition of ${\hat T}_{\alpha}^{n,m}$.

	\end{proof}

Before proving our next lemma, note that we can write \eqref{eqiet6}  as
		\begin{equation}\label{xi01}
		{\hat \pi}^{(m-1)}(j)= \left\{\begin{array}{ll}\vspace*{0.2cm}
		{\hat \pi}^{(m)}({ \tilde \pi  }^{(m)}(d)+1), & j=d,\\\vspace*{0.2cm}
		{\hat \pi}^{(m)}(j+1), & {\tilde \pi}^{(m-1)}(d)< j < d,\\
		{\hat \pi}^{(m)}(j), & j \leq {\tilde \pi}^{(m)}(d).
		\end{array}\right.
		\end{equation}

Our next two lemmas serve as technical tools for our next section. Most of their proofs consists of simple computations using our formulas and definitions.
We highlight the main steps but do not present an exhaustive proof.
	\begin{lemma}\label{LXi0}
		Let $n\geq 1$ and $0< m \leq n$. If $\varepsilon(m-1)=0$ and $\xi_{d-1}^{n,m-1}=\gamma_{1,d-1}^{n,m-1}$, then 
		\begin{equation}\label{xi02'}
		\hat{T}_{\alpha}^{(n,m-1)}(z)=\hat{T}_{\alpha}^{(n,m)}(z).
		\end{equation}
		for all $z \in \C$ and $\alpha \in \mathcal{A}\backslash \{ \beta_{1,m-1}  \}$.
	\end{lemma}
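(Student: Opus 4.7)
The plan is to verify the identity $\hat{T}_{\alpha}^{(n,m-1)}(z)=\hat{T}_{\alpha}^{(n,m)}(z)$ by comparing, via \eqref{hatTnm}, the three parameters $\theta^{(m)}_{\alpha}$, $\gamma^{n,m}_{0,\pi_0^{(m)}(\alpha)}$, and $\xi^{n,m}_{\pi_1^{(m)}(\alpha)}$ at levels $m-1$ and $m$. In the type-$0$ case \eqref{eqiet6} gives $\pi_0^{(m-1)}=\pi_0^{(m)}$, the length update yields $\lambda^{(m-1)}_{\alpha}=\lambda^{(m)}_{\alpha}$ for $\alpha\neq\beta_{0,m-1}$, so $\gamma^{n,m-1}_{0,j}=\gamma^{n,m}_{0,j}$ for every $j<d$, and \eqref{A4} gives $\theta^{(m-1)}_{\alpha}=\theta^{(m)}_{\alpha}$ for $\alpha\neq\beta_{1,m-1}$. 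Writing $k:=\tilde{\pi}^{(m-1)}(d)=\pi_1^{(m-1)}(\beta_{0,m-1})$ and reading off \eqref{eqiet6}, the $\pi_1$-positions of $\alpha\in\mathcal{A}\setminus\{\beta_{1,m-1}\}$ transform as $\pi_1^{(m)}(\alpha)=\pi_1^{(m-1)}(\alpha)$ if $\pi_1^{(m-1)}(\alpha)\leq k$, and $\pi_1^{(m)}(\alpha)=\pi_1^{(m-1)}(\alpha)+1$ otherwise. The lemma therefore reduces to proving
\begin{align*}
\text{(i)}\;&\xi^{n,m-1}_j=\xi^{n,m}_j,\quad 0\leq j<k,\\
\text{(ii)}\;&\xi^{n,m-1}_k=\xi^{n,m}_k+e^{i\theta^{(m)}_{\beta_{0,m-1}}}\bigl(\gamma^{n,m-1}_{0,d}-\gamma^{n,m}_{0,d}\bigr),\\
\text{(iii)}\;&\xi^{n,m-1}_j=\xi^{n,m}_{j+1},\quad k+1\leq j\leq d-1.
\end{align*}

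I would establish (iii) by downward induction from $j=d-1$. The base case is exactly where the hypothesis enters: a brief computation with the length updates shows $\gamma^{n,m-1}_{1,d-1}=\gamma^{n,m}_{0,d}=\xi^{n,m}_d$, so $\xi^{n,m-1}_{d-1}=\gamma^{n,m-1}_{1,d-1}=\xi^{n,m}_d$. The inductive step compares the increments supplied by the recurrence \eqref{xis}; for $j>k$, \eqref{xi01} gives $\hat{\pi}^{(m-1)}(j)=\hat{\pi}^{(m)}(j+1)$, while \eqref{eqiet6} and \eqref{A4} ensure the corresponding labels and angles match, so both increments coincide. Taking $j=k$ in (iii) yields $\xi^{n,m-1}_k=\xi^{n,m}_{k+1}$, so (ii) reduces to $\xi^{n,m}_{k+1}-\xi^{n,m}_k=e^{i\theta^{(m)}_{\beta_{0,m-1}}}\bigl(\gamma^{n,m-1}_{0,d}-\gamma^{n,m}_{0,d}\bigr)$. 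Evaluating the left side via \eqref{xis} gives $e^{i\theta^{(m)}_{\beta_{1,m-1}}}\bigl(\gamma^{n,m}_{0,\mathfrak{p}}-\gamma^{n,m}_{0,\mathfrak{p}-1}\bigr)$ with $\mathfrak{p}=\pi_0^{(m)}(\beta_{1,m-1})$, while applying \eqref{xis} at level $m-1$ to $\xi^{n,m-1}_{d-1}-\xi^{n,m-1}_d$ and reusing the hypothesis rewrites the right side in exactly the same form, after one application of \eqref{A4} to combine $\theta^{(m-1)}_{\beta_{0,m-1}}+\theta^{(m-1)}_{\beta_{1,m-1}}=\theta^{(m)}_{\beta_{1,m-1}}$.

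Finally, (i) follows by another downward induction starting at $j=k-1$. Applying \eqref{xis} at both levels to express $\xi_{k-1}^{\,\cdot}$ in terms of $\xi_k^{\,\cdot}$, the discrepancy between $\gamma^{n,m-1}_{0,d}$ and $\gamma^{n,m}_{0,d}$ is exactly compensated by the correction from (ii), yielding the base case; the inductive step is routine because \eqref{xi01} gives $\hat{\pi}^{(m-1)}(j)=\hat{\pi}^{(m)}(j)$ for $j\leq k$ and the bottom rows of \eqref{eqiet6} agree on positions below $k$. The main obstacle is the case $\alpha=\beta_{0,m-1}$: even though $\gamma^{n,m-1}_{0,d}\neq\gamma^{n,m}_{0,d}$ in general, the two isometries $\hat{T}^{(n,m-1)}_{\beta_{0,m-1}}$ and $\hat{T}^{(n,m)}_{\beta_{0,m-1}}$ must nevertheless coincide, and the hypothesis $\xi^{n,m-1}_{d-1}=\gamma^{n,m-1}_{1,d-1}$ is precisely the compatibility condition that encodes this cancellation.
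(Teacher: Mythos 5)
Your proposal is correct and follows essentially the same route as the paper: downward inductions on the $\xi_j$ sequence using the recurrence \eqref{xis}, the index transformations \eqref{eqiet6}/\eqref{xi01}, and the angle relations \eqref{A4}, with the hypothesis $\xi^{n,m-1}_{d-1}=\gamma^{n,m-1}_{1,d-1}$ supplying the base case and the compensating term for $\alpha=\beta_{0,m-1}$. The main organizational difference is that you cleanly isolate the three $\xi$-relations (i)--(iii) — identity below the pivot $k$, a corrected identity at $k$, and a shift-by-one above $k$ — and then read off \eqref{xi02'} from the structure of $\hat T^{(n,\cdot)}_{\alpha}$ in \eqref{hatTnm}, whereas the paper interleaves the $\xi$ computations with the isometry checks, establishing the shift for $k\leq j<d$ first, treating $\alpha=\beta_{0,m-1}$ separately via \eqref{A3}, and then descending for $j<k$. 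Your modular decomposition makes it clearer why (ii) is exactly the compatibility the hypothesis encodes for the $\alpha=\beta_{0,m-1}$ case; the content of the argument is the same.
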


	\begin{proof}

		For  ${\tilde \pi}^{(m)}(d)< j < d$, from the definition of $\hat{\pi}^{(m-1)}$ and since $\gamma_{0,j}^{n,m-1}= \gamma_{0,j}^{n,m}$ we can write
		\begin{equation*}\label{xi03}
		 \gamma_{0,{\hat \pi}^{(m-1)}(j)}^{n,m-1} - \gamma_{0,{\hat \pi}^{(m-1)}(j)}^{n,m-1} =  \gamma_{0,{\hat \pi}^{(m)}(j+1)-1}^{n,m} - \gamma_{0,{\hat \pi}^{(m)}(j+1) }^{n,m}.
		\end{equation*}
		
		Since $\pi_{0}^{(m-1)}= \pi_{0}^{(m)}$,   $\left( \pi_{1}^{(m-1)}\right)^{-1}(j) = \left( \pi_{1}^{(m)}\right)^{-1}(j+1)$, and as $j<d$, using \eqref{A4} we get
		\begin{equation*}\label{xi04}
		\theta_{\left( \pi_{1}^{(m-1)}\right)^{-1}(j)}^{(m-1)} = \theta_{\left( \pi_{1}^{(m)}\right)^{-1}(j+1)}^{(m)}.
		\end{equation*}
		As $\xi_{d-1}^{n,m-1}= \gamma_{1,d-1}^{n,m-1}$ and $\gamma_{1,d-1}^{n,m-1}=\gamma_{0,d}^{n,m}$, the two expressions above give for ${\tilde \pi}^{(m)}(d)\leq j < d$ 
		\begin{equation}\label{xi05}
		\xi_{j}^{n,m-1}= \xi_{j+1}^{n,m}.
		\end{equation}
		
		Now assume $\alpha \in \mathcal{A}$ is such that $\pi_{1}^{(m)}(\alpha)>{\tilde \pi}^{(m)}(d)+1$. By \eqref{xi05} we get $\xi_{\pi_{1}^{(m)}(\alpha)}^{n,m}=\xi_{\pi_{1}^{(m)}(\alpha)-1}^{n,m-1}$, and since by \eqref{eqiet6}, we have $\pi_{1}^{(m-1)}(\alpha)= \pi_{1}^{(m)}(\alpha)-1$, this gives $$\xi_{\pi_{1}^{(m)}(\alpha)}^{n,m}=\xi_{\pi_{1}^{(m-1)}(\alpha)}^{n,m-1}.$$
		
Since $\gamma_{0,j}^{n,m-1}=	\gamma_{0,j}^{n,m}$ the proof of the lemma in this case follows from  the definition of $\xi_j^{n,m}$ and ${\hat T}_{\alpha}^{n,m}$.

		Note that $\pi_1^{(m-1)}(\beta_{0,m-1})=\pi_1^{(m)}(\beta_{1,m-1})$ and thus it follows from \eqref{xi05} that  $  \xi_{\pi_{1}^{(m-1)}(\beta_{0,m-1})}^{n,m-1}  =  \xi_{\pi_{1}^{(m)}(\beta_{1,m-1})}^{n,m} $.  By \eqref{A3} we get
		\begin{equation}\label{xi05c}
		\xi_{\pi_{1}^{(m-1)}(\beta_{0,m-1})}^{n,m-1} - \xi_{\pi_{1}^{(m)}(\beta_{0,m-1})}^{n,m} = e^{i\theta_{\beta_{1,m-1}}^{(m)}} \left( \gamma_{0, {\hat \pi}^{(m-1)}(d)}^{n,m} - \gamma_{0, {\hat \pi}^{(m-1)}(d)-1}^{n,m}  \right).
		\end{equation}
		
		Since $\xi_{d-1}^{n,m-1}=\gamma_{1,d-1}^{n,m-1}$, we have
		\begin{equation}\label{xi01'}
		\gamma_{0,d}^{n,m-1} - \gamma_{1,d-1}^{n,m-1} = e^{i\theta_{\beta_{1,m-1}}^{(m-1)}} \left( \gamma_{0, {\hat \pi}^{(m-1)}(d)}^{n,m-1} - \gamma_{0, {\hat \pi}^{(m-1)}(d)-1}^{n,m-1}  \right),
		\end{equation}
		which combined with \eqref{A4} and \eqref{xi05c}, using the fact that $\gamma_{1,d-1}^{n,m-1} = \gamma_{0,d}^{n,m} $, $\gamma_{0,j}^{n,m-1}=\gamma_{0,j}^{n,m}$, when $j<d$ and the definition of $\xi_j^{n,m}$ proves the lemma  for $\alpha= \beta_{0,m-1}$.\\


	From \eqref{A4}, \eqref{xi01} and \eqref{xi01'},  as $\pi_{1}^{(m)}(\beta_{1,m-1})= \pi_{1}^{(m)}(\beta_{0,m})+1$
and $\gamma_{0,d}^{n,m}=\gamma_{1,d-1}^{n,m-1}$, a trivial computation gives
		\begin{equation*}\label{xi09}
		\xi_{{ \tilde \pi  }^{(m)}(d)+1}^{n,m} = e^{i\theta_{\beta_{0,m}}^{(m)}} \left( \gamma_{0, d}^{n,m-1} - \gamma_{0, d}^{n,m}  \right) + \xi_{{ \tilde \pi  }^{(m)}(d)}^{n,m} .
		\end{equation*}
		
		By \eqref{xi01}, \eqref{xi05} and from the definition of $\xi_{{ \tilde \pi  }^{(m)}(d)-1}^{n,m-1}$ we get
		\begin{equation*}\label{}
				\xi_{{ \tilde \pi  }^{(m)}(d)-1}^{n,m-1} = e^{i\theta_{\beta_{0,m}}^{(m-1)}} \left( \gamma_{0, d-1}^{n,m-1} - \gamma_{0, d}^{n,m-1}  \right) + \xi_{{ \tilde \pi  }^{(m)}(d)+1}^{n,m}.
		\end{equation*}
		Combining this with \eqref{A4}, \eqref{xi01} and noting that $\gamma_{0,d-1}^{n,m} = \gamma_{0,d-1}^{n,m-1} $, the relation
		\begin{equation*}
			\xi_{{ \tilde \pi  }^{(m)}(d)-1}^{n,m-1} =	\xi_{{ \tilde \pi  }^{(m)}(d)-1}^{n,m}.
		\end{equation*}		
		simply follows from the definition of $\xi_j^{n,m}$ for $j=\tpi^{(m)}(d)-1$.

	We now prove by induction on $j$ that		
		\begin{equation}\label{xi013}
		\xi_{j}^{n,m-1} =	\xi_{j}^{n,m}.
		\end{equation}
		for $1 \leq j < \tpi^{(m)}(d)$. 
		
		Since $\pi_{0}^{(m-1)}= \pi_{0}^{(m)}$, we get by   \eqref{xi01} that $( \pi_{1}^{(m-1)})^{-1}(j) = ( \pi_{1}^{(m)})^{-1}(j)$, and as $j<d$, by \eqref{A4} we have
		\begin{equation}\label{xi012}
		\theta_{\left( \pi_{1}^{(m-1)}\right)^{-1}(j)}^{(m-1)} = \theta_{\left( \pi_{1}^{(m)}\right)^{-1}(j)}^{(m)},
		\end{equation}
		Combined with \eqref{xi01} this gives
		$$\xi_{j-1}^{n,m-1}= e^{i \theta^{(m)}_{ (\pi_1^{(m)})^{-1} (j)  }}   \left( \gamma^{n,m}_{0,  {\hat \pi}^{(m)}(j) -1  }   -  \gamma^{n,m}_{0,  {\hat \pi}^{(m)}(j) }   \right)+\xi_{j}^{n,m},$$
		which, as $\xi_{j}^{n,m-1} =	\xi_{j}^{n,m}$, by \eqref{xis} shows that $\xi_{j-1}^{n,m-1} =	\xi_{j-1}^{n,m}$, proving \eqref{xi013}.

		Now assume $\alpha \in \mathcal{A}$ is such that $\pi_{1}^{(m)}(\alpha)< \tpi^{(m)}(d)$.
		From \eqref{xi013}, since $\pi_1^{(m-1)}(\alpha)= \pi_1^{(m)}(\alpha)$ we get $\xi_{\pi_1^{(m-1)}(\alpha)}^{n,m-1} =	\xi_{\pi_1^{(m)}(\alpha)}^{n,m}$. This, combined with  \eqref{xi012} and the definition of $\hat{T}_{\alpha}^{(n,m)}$, proves our statement  for $\pi_{1}^{(m)}(\alpha)< \tpi^{(m)}(d)$.

		Since $\pi_1^{(m)}(\beta_{0,m-1})= \tpi^{(m)}(d)$ and since we proved \eqref{xi02'}  for $\alpha = \beta_{0,m-1}$ and $\pi_{1}^{(m)}(\alpha)> \tpi^{(m)}(d)+1$, we get \eqref{xi02'}, for all $\pi_{1}^{(m)}(\alpha)\neq \tpi^{(m)}(d)+1$. As $\pi_{1}^{(m)}(\beta_{1,m-1})= \pi_{1}^{(m)}(\beta_{0,m})+1$ and   $\pi_{1}^{(m)}(\beta_{1,m})=\tpi^{(m)}(d)+1$ we have \eqref{xi02'} for all $\alpha \in \mathcal{A}\backslash \{ \beta_{1,m-1}  \}$.		
	\end{proof}

Note that by \eqref{eqiet8} we can write
		\begin{equation}\label{xi11c}
		{\hat \pi}^{(m-1)}(j)= \left\{\begin{array}{ll}\vspace*{0.2cm}
		{\hat \pi}^{(m)}(j)-1, & \hpi^{(m)}(j)>\hpi^{(m)}(d)+1,\\\vspace*{0.2cm}
		d, & \hpi^{(m)}(j)= \hpi^{(m)}(d)+1,\\
		{\hat \pi}^{(m)}(j), &  \hpi^{(m)}(j)<\hpi^{(m)}(d)+1.
		\end{array}\right.
		\end{equation}

	The following lemma provides a result similar to that of Lemma \ref{LXi0}, but for the case $\varepsilon(m-1)=1$. The main difference, compared to the previous case, comes from the fact that $\xi_{d-1}^{n,m-1}$ does not, beforehand, coincide with $\gamma_{1,d-1}^{n,m-1}$, although we will later establish this equality.

	\begin{lemma}\label{LXi1}
	Let $n\geq 1$ and $0< m \leq n$. If $\varepsilon(m-1)=1$ and $\xi_{d-1}^{n,m-1}=\xi_{d-1}^{n,m}$, then for all $z \in \C$ and $\alpha \in \mathcal{A}\backslash \{ \beta_{0,m-1}, \beta_{1,m-1}  \}$  we have
	\begin{equation}\label{xi11'}
	\hat{T}_{\alpha}^{(n,m-1)}(z)=\hat{T}_{\alpha}^{(n,m)}(z).
	\end{equation}
and
	\begin{equation}\label{xi12'}
	\hat{T}_{\beta_{0,m-1}}^{(n,m-1)}(z)=\hat{T}_{\beta_{0,m-1}}^{(n,m)} \circ \left(\hat{T}_{\beta_{1,m-1}}^{(n,m)}\right)^{-1}(z).
	\end{equation}
    \end{lemma}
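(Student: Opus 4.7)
My plan is to mirror the structure of the proof of Lemma \ref{LXi0}, with the type-0 formulas replaced by their type-1 analogues \eqref{eqiet8}, \eqref{A6a}, \eqref{A7}, and \eqref{xi11c}. I will first translate the hypothesis $\xi_{d-1}^{n,m-1}=\xi_{d-1}^{n,m}$ into a concrete geometric identity, then propagate the $\xi$-equality downward via \eqref{xis} as far as possible, prove \eqref{xi11'} for the ``upper'' range of $\alpha$, bridge the bad level using Lemma \ref{LA} to establish \eqref{xi12'}, and finally extend the propagation and deduce \eqref{xi11'} for the ``lower'' range.

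Set $k=\hat\pi^{(m)}(d)=\hat\pi^{(m-1)}(d)=\pi_0^{(m)}(\beta_{1,m-1})$. Since $\pi_1^{(m)}=\pi_1^{(m-1)}$ we have $\beta_{1,m}=\beta_{1,m-1}$, and by \eqref{A7}, $\theta_{\beta_{1,m-1}}^{(m)}=\theta_{\beta_{1,m-1}}^{(m-1)}$. Expanding both $\xi_{d-1}^{n,m-1}$ and $\xi_{d-1}^{n,m}$ via \eqref{xis}, using $\xi_d^{n,m}=\gamma_{0,d}^{n,m}$ and the identities $\gamma_{0,k-1}^{n,m-1}=\gamma_{0,k-1}^{n,m}$, $\gamma_{0,k}^{n,m-1}=\gamma_{0,k+1}^{n,m}$, $\gamma_{0,d-1}^{n,m-1}=\gamma_{0,d}^{n,m}$ from \eqref{A6a}, the hypothesis reduces to the key geometric identity
\begin{equation*}
e^{i\theta_{\beta_{1,m-1}}^{(m-1)}}\bigl(\gamma_{0,k+1}^{n,m}-\gamma_{0,k}^{n,m}\bigr)=\gamma_{0,d}^{n,m-1}-\gamma_{0,d-1}^{n,m-1}.
\end{equation*}
A downward induction from the hypothesis then shows $\xi_j^{n,m-1}=\xi_j^{n,m}$ for $\tpi^{(m-1)}(d)\le j\le d-1$: the step deriving $\xi_j$ from $\xi_{j+1}$ via \eqref{xis} involves the symbol $(\pi_1^{(m)})^{-1}(j+1)$, and by \eqref{A7}, \eqref{xi11c}, and \eqref{A6a} the increments at levels $m$ and $m-1$ coincide precisely when $(\pi_1^{(m)})^{-1}(j+1)\neq\beta_{0,m-1}$, i.e.\ $j+1\neq\tpi^{(m-1)}(d)$; hence the induction runs down to $j=\tpi^{(m-1)}(d)$ and stalls there.

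Identity \eqref{xi11'} for $\alpha$ with $\pi_1^{(m)}(\alpha)>\tpi^{(m-1)}(d)$ now follows from a direct comparison of \eqref{hatTnm} at levels $m-1$ and $m$: $\pi_1^{(m-1)}(\alpha)=\pi_1^{(m)}(\alpha)$, $\theta_\alpha^{(m)}=\theta_\alpha^{(m-1)}$ by \eqref{A7}, and $\gamma_{0,\pi_0^{(m-1)}(\alpha)}^{n,m-1}=\gamma_{0,\pi_0^{(m)}(\alpha)}^{n,m}$ by examining \eqref{eqiet8} and \eqref{A6a} in the two cases $\pi_0^{(m-1)}(\alpha)<k$ and $\pi_0^{(m-1)}(\alpha)>k$. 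For \eqref{xi12'}, I will use the second statement of Lemma \ref{LA} to express $\hat T_{\beta_{0,m-1}}^{(n,m)}\circ(\hat T_{\beta_{1,m-1}}^{(n,m)})^{-1}(z)$ and compare it with the direct expansion of $\hat T_{\beta_{0,m-1}}^{(n,m-1)}(z)$; the discrepancy reduces, using $\xi_{\tpi^{(m-1)}(d)}^{n,m-1}=\xi_{\tpi^{(m-1)}(d)}^{n,m}$, the recursion \eqref{xis} to expand $\xi_{\tpi^{(m-1)}(d)-1}^{n,m}$, and \eqref{A7} for the angle $\theta_{\beta_{0,m-1}}^{(m)}=\theta_{\beta_{0,m-1}}^{(m-1)}+\theta_{\beta_{1,m-1}}^{(m-1)}$, precisely to the key identity above. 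As a byproduct this yields $\xi_{\tpi^{(m-1)}(d)-1}^{n,m-1}=\xi_{\tpi^{(m-1)}(d)-1}^{n,m}$, which restarts the downward induction (now past the bad level, the symbol $\beta_{0,m-1}$ no longer appears) and extends the equality to all $1\le j<\tpi^{(m-1)}(d)$; \eqref{xi11'} for $\alpha$ with $\pi_1^{(m)}(\alpha)<\tpi^{(m-1)}(d)$ then follows exactly as in the upper case.

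The main obstacle is handling the bad level $j=\tpi^{(m-1)}(d)$, where a naive inductive step across \eqref{xis} fails: \eqref{eqiet8} relocates $\beta_{0,m-1}$ from position $d$ in $\pi_0^{(m-1)}$ to position $k+1$ in $\pi_0^{(m)}$, and \eqref{A7} changes its angle; Lemma \ref{LA} provides the bridge, and the geometric identity extracted from the hypothesis is precisely what reconciles the resulting translation vectors.
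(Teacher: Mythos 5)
Your proposal is correct and follows essentially the same route as the paper's proof: extract the geometric identity $e^{i\theta_{\beta_{1,m-1}}^{(m-1)}}(\gamma_{0,k+1}^{n,m}-\gamma_{0,k}^{n,m})=\gamma_{0,d}^{n,m-1}-\gamma_{0,d-1}^{n,m-1}$ (this is the paper's \eqref{xi16}) from the hypothesis, run the downward $\xi$-induction which stalls at $j=\tpi^{(m-1)}(d)$, bridge the bad level using \eqref{A7}, \eqref{xi11c}, \eqref{xis} and Lemma \ref{LA}, and finish the induction to obtain \eqref{xi11'}. The only structural difference is cosmetic: the paper bridges the bad level directly from the recursion before proving either displayed identity, whereas you obtain the bridge equality $\xi_{\tpi^{(m-1)}(d)-1}^{n,m-1}=\xi_{\tpi^{(m-1)}(d)-1}^{n,m}$ as a byproduct of verifying \eqref{xi12'}; both orderings rest on the same computation.
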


	\begin{proof}

		By  \eqref{A6a} and \eqref{xi11c},   for all $j$ such that $\hpi^{(m)}(j)\notin \{ \hpi^{(m)}(d), \hpi^{(m)}(d)+1    \}$, we get
		\begin{equation}\label{xi11}
		\gamma_{0,\hpi^{(m-1)}(j)}^{n,m-1}= \gamma_{0,\hpi^{(m)}(j)}^{n,m},
		\end{equation}
		similarly, $\gamma_{0,\hpi^{(m-1)}(j)-1}^{n,m-1}= \gamma_{0,\hpi^{(m)}(j)-1}^{n,m}$. In particular, for any $j \notin \{ \tpi^{(m)}(\hpi^{(m)}(d)+1), d    \}$ we have
		\begin{equation}\label{xi12}
		\gamma_{0,\hpi^{(m-1)}(j)-1}^{n,m-1}-\gamma_{0,\hpi^{(m-1)}(j)}^{n,m-1} = \gamma_{0,\hpi^{(m)}(j)-1}^{n,m}-\gamma_{0,\hpi^{(m)}(j)}^{n,m}.
		\end{equation}
		
		As $\pi_1^{(m-1)}=\pi_1^{(m)}$ and by \eqref{A7}, for all $j<d$ we have
		\begin{equation}\label{xi13}
		\theta_{(\pi_1^{(m-1)})^{-1}(j)}^{(m-1)} = \theta_{(\pi_1^{(m)})^{-1}(j)}^{(m)}.
		\end{equation}

	We now prove, by induction on $j$, that
			\begin{equation}\label{xi14}
		\xi_{j}^{n,m-1}= \xi_{j}^{n,m}.
		\end{equation}
		for $\tpi^{(m-1)}(d)\leq j<d$.
		
		We have $\xi_{d-1}^{n,m-1}= \xi_{d-1}^{n,m}$. Take $\tpi^{(m-1)}(d)< j<d$. As $\tpi^{(m)}(\hpi^{(m)}(d)+1) = \tpi^{(m-1)}(d)$, we have that $ j \notin \{\tpi^{(m)}(\hpi^{(m)}(d)+1), d    \}$, hence by \eqref{xi12} and \eqref{xi13} we get $\xi_{j-1}^{n,m-1}= \xi_{j-1}^{n,m}$. This shows that for any $\tpi^{(m-1)}(d) \leq j<d$, \eqref{xi14} holds.

		By \eqref{A6a} and \eqref{xi11c} we have $\gamma_{0,\hpi^{(m-1)}(d)}^{n,m-1}=\gamma_{0,\hpi^{(m)}(d)+1}^{n,m}$ and $\gamma_{0,\hpi^{(m-1)}(d)-1}^{n,m-1}=\gamma_{0,\hpi^{(m)}(d)-1}^{n,m}$, thus by \eqref{xis} we get
		$$  \xi_{d-1}^{n,m-1}= e^{i \theta^{(m-1)}_{ \beta_{1,m-1}  }}   \left( \gamma^{n,m}_{0,  {\hat \pi}^{(m)}(d) -1  }   -  \gamma^{n,m}_{0,  {\hat \pi}^{(m)}(d)+1 }   \right)+\gamma_{0,d}^{n,m-1}, $$
		since $\xi_{d-1}^{n,m-1} =  \xi_{d-1}^{n,m}  $ and $\xi_{d}^{n,m}=\gamma_{0,d-1}^{n,m-1}$, by combining this with the definition of $\xi_{d-1}^{n,m}$ and \eqref{xi13} we get
		\begin{equation}\label{xi16}
		\gamma_{0,d-1}^{n,m-1}- \gamma_{0,d}^{n,m-1} = e^{i \theta_{\beta_{1,m-1}}^{(m-1)}} \left( \gamma^{n,m}_{0,  {\hat \pi}^{(m)}(d) }   -  \gamma^{n,m}_{0,  {\hat \pi}^{(m)}(d)+1 }   \right).
		\end{equation}
		
		By \eqref{xis}, with $j= \tpi^{(m-1)}(d) -1$, we have
		$$  \xi_{\tpi^{(m-1)}(d)-1}^{n,m-1}   =  e^{i \theta_{\beta_{0,m-1}}^{(m-1)}} \left( \gamma^{n,m-1}_{0,  d-1 }   -  \gamma^{n,m-1}_{0,  d }   \right)  +   \xi_{\tpi^{(m-1)}(d)}^{n,m-1},  $$
		which by and \eqref{A7}, \eqref{xi14} and \eqref{xi16}  gives
		$$  \xi_{\tpi^{(m-1)}(d)-1}^{n,m-1}   =  e^{i \theta_{\beta_{0,m-1}}^{(m)}} \left( \gamma^{n,m}_{0,  {\hat \pi}^{(m)}(d) }   -  \gamma^{n,m}_{0,  {\hat \pi}^{(m)}(d)+1 }   \right)  +   \xi_{\tpi^{(m-1)}(d)}^{n,m},  $$
		and as, by \eqref{xi11c}, $\tpi^{(m-1)}(d)= \tpi^{(m)}(\hpi^{(m)}(d)+1)$, combined with \eqref{xis} this shows that $\xi_{\tpi^{(m-1)}(d)-1}^{n,m-1}  = \xi_{\tpi^{(m-1)}(d)-1}^{n,m}$.
		
		Now assume, by induction in $j$, that for some $j<\tpi^{(m-1)}(d)$ we have $\xi_{j}^{n,m-1} = \xi_{j}^{n,m}    $. It is straightforward to see, by definition of $\xi_{j}^{n,m}$, \eqref{xi12} and \eqref{xi13} that $\xi_{j-1}^{n,m-1} = \xi_{j-1}^{n,m}    $.
		Since we had proved before that \eqref{xi14} holds for $\tpi^{(m-1)}(d) \leq j < d$, this shows that \eqref{xi14} is true for all $j<d$.
		
		Now, consider $\alpha \in \mathcal{A} \backslash \{  \beta_{0,m-1},\beta_{1,m-1}  \}$. By taking $j=\pi_{1}^{(m)}(\alpha)$ we get $j \notin \{ \tpi^{(m)}(\hpi^{(m)}(d)+1), d    \}$ and by \eqref{xi11} we obtain
		$ \gamma_{0,\pi_{0}^{(m-1)}(\alpha)}^{n,m-1}= \gamma_{0,\pi_{0}^{(m)}(\alpha)}^{n,m} , $ and thus by \eqref{xi13}, \eqref{xi14} and \eqref{hatTnm} we get \eqref{xi11'}.\\
		
		By \eqref{hatTnm}, for all $z \in \C$, we get
		\begin{equation*}\label{}
		\begin{array}{l}\vspace*{0.2cm}
		\hat{T}_{\beta_{0,m-1}}^{(n,m)} \circ \left(\hat{T}_{\beta_{1,m-1}}^{(n,m)}\right)^{-1}(z) = \xi_{\pi_1^{(m)}(\beta_{0,m-1})}^{n,m} +\\
		e^{i \theta_{ \beta_{0,m-1} }^{ (m) }}\left[    e^{- i \theta_{\beta_{1,m-1}}^{(m)} }  \left( z - \xi_{\pi_1^{(m)}(\beta_{1,m-1})}^{n,m}         \right)  + \gamma_{0,\pi_0^{(m)}(\beta_{1,m-1})}^{n,m}   -   \gamma_{0,\pi_0^{(m)}(\beta_{0,m-1})}^{n,m} \right] ,
		\end{array}
		\end{equation*}
		which by Lemma \ref{LA}  gives
		$$ \hat{T}_{\beta_{0,m-1}}^{(n,m)} \circ \left(\hat{T}_{\beta_{1,m-1}}^{(n,m)}\right)^{-1}(z) =  e^{i\theta_{\beta_{0,m-1}}^{(m-1)}}\left( z -  \gamma_{0,d-1}^{n,m-1}   \right)  + \xi_{0,{\tilde  \pi}^{(m-1)}(d)-1}^{n,m}    ,$$
		 combined with \eqref{xi14} and \eqref{xis} for $j= {\tilde  \pi}^{(m-1)}(d)-1 $, this gives
		$$ \hat{T}_{\beta_{0,m-1}}^{(n,m)} \circ \left(\hat{T}_{\beta_{1,m-1}}^{(n,m)}\right)^{-1}(z) = e^{i\theta_{\beta_{0,m-1}}^{(m-1)}}\left( z -  \gamma_{0,d}^{n,m-1}   \right)  + \xi_{0,{\tilde  \pi}^{(m-1)}(d)}^{n,m} .$$
		By \eqref{hatTnm} this shows that \eqref{xi12'} holds.		
	\end{proof}

\vspace*{0.2in}

Consider now $J=\{ J_k  \}_{0\leq k<r}$, with $r \in \N $, an ordered sequence of disjoint subintervals of $I$. Let $I'$ be a subinterval of $I$, we denote
$$  J \cap I' = \{ J_k\cap I' : J_k \cap I' \neq \emptyset    \}_{0\leq k<r}.  $$
Recall we denote by $J^{(n+1)}$ the ordering of $\left\{   f_{\lambda,\pi}^k\left(I^{(n)}\backslash I^{(n+1)}\right)   \right\}_{0\leq k<r(n)}$ ,
where $r(n) = r_{\lambda,\pi}^{n}\left(I_{\beta_{0,n}}^{(n)}\right)$.

Given $n \in \N$ we define a sequence $\{k(m)\}_{0\leq m \leq n+1}$ of indices of $J^{(n+1)}$ as follows. Set $k(n+1)=0$. For $0 \leq m < n+1$ let $k(m)$ be equal to the number of disjoint subintervals in $J^{(n+1)}\cap I^{(m)}$. It is clear we have
$$  J^{(n+1)} \cap (I^{(m-1)}\backslash I^{(m)}) = \{ J_{k} \}_{k(m)\leq k < k(m-1)}, $$
for $0 < m \leq n+1$.

Denote by $\beta(m) = \beta_{1-\varepsilon(m),m}$ that is, the loser of $(\lambda^{(m)},\pi^{(m)})$. The following two lemmas, which describe $J^{(n+1)}$, will be needed for the following section. 
	
	\begin{lemma}\label{LIET2}
		For all $n \geq 0$, $0<m\leq n+1$ and $0 \leq k <r(m-1)$, if $J_k \cap (I^{(m-1)} \backslash I^{(m)}) \neq \emptyset  $ then $J_k \subseteq I^{(m-1)} \backslash I^{(m)}$.
	\end{lemma}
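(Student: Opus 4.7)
The plan is to prove, by induction on $n$, the following \emph{auxiliary claim}: for every $n \geq 0$, every $\alpha \in \mathcal{A}$, and every integer $k$ with $0 < k < r_{\lambda,\pi}^n(I_\alpha^{(n)})$, there exists $j \in \{1,\ldots,n\}$ such that the Rokhlin tower atom $f_{\lambda,\pi}^k(I_\alpha^{(n)})$ is contained in $I^{(j-1)} \setminus I^{(j)}$. Lemma \ref{LIET2} will then follow almost immediately.

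The starting observation is the elementary fact that $I^{(n)} \setminus I^{(n+1)} \subseteq I_{\beta_{0,n}}^{(n)}$: the removed segment has length $\lambda_{\beta(n)}^{(n)} = \min(\lambda_{\beta_{0,n}}^{(n)}, \lambda_{\beta_{1,n}}^{(n)}) \leq \lambda_{\beta_{0,n}}^{(n)}$, and $I_{\beta_{0,n}}^{(n)}$ is the rightmost subinterval of $I^{(n)}$. Since $r(n) = r_{\lambda,\pi}^n(I_{\beta_{0,n}}^{(n)})$, the map $f_{\lambda,\pi}^k$ acts as a single translation on $I_{\beta_{0,n}}^{(n)}$ for each $0 \leq k < r(n)$, giving $J_k \subseteq f_{\lambda,\pi}^k(I_{\beta_{0,n}}^{(n)})$. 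The lemma then follows by cases on $m$. For $m = n+1$, the case $k = 0$ gives $J_0 = I^{(n)} \setminus I^{(n+1)}$ directly, while for $k > 0$ the set $J_k \subset I \setminus I^{(n)}$ is disjoint from $I^{(n)} \setminus I^{(n+1)} \subset I^{(n)}$ and the hypothesis is vacuous. For $m \leq n$, if $J_k \cap (I^{(m-1)} \setminus I^{(m)}) \neq \emptyset$ then necessarily $k > 0$, and the auxiliary claim places $f_{\lambda,\pi}^k(I_{\beta_{0,n}}^{(n)})$ inside some $I^{(j-1)} \setminus I^{(j)}$; pairwise disjointness of these pieces forces $j = m$ and yields $J_k \subseteq I^{(m-1)} \setminus I^{(m)}$.

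The auxiliary claim itself is proved by induction on $n$. The base case $n = 0$ is vacuous since $r_{\lambda,\pi}^0(I_\alpha) = 1$. For the inductive step $n \to n+1$, I would exploit the cutting-and-stacking description of the Rokhlin tower at step $n+1$ relative to the one at step $n$. Under one Rauzy step the positive-level atoms at step $n+1$ split into two classes: those that coincide with positive-level atoms at step $n$, for which the induction hypothesis applies directly (the pieces $I^{(j-1)} \setminus I^{(j)}$ with $j \leq n$ appear identically in the step-$n$ and step-$(n+1)$ partitions of $I$), and new \emph{stacked} atoms of the form $f_{\lambda,\pi}^j(I^{(n)} \setminus I^{(n+1)})$ for $0 < j < r(n)$, each of which is a subset of the step-$n$ atom $f_{\lambda,\pi}^j(I_{\beta_{0,n}}^{(n)})$ and hence, by the induction hypothesis, lies in a single $I^{(m-1)} \setminus I^{(m)}$ with $m \leq n$.

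The main technical obstacle is rigorously justifying this cutting-and-stacking description in both type cases $\varepsilon(n) = 0$ and $\varepsilon(n) = 1$, which have different combinatorial structures: the loser interval is either absorbed into the winner's rightmost atom (type $0$) or relocated to occupy the rightmost portion of the former winner's atom (type $1$). Correspondingly one must track how $r_{\lambda,\pi}^{n+1}(I_\alpha^{(n+1)})$ relates to $r_{\lambda,\pi}^n(I_\alpha^{(n)})$ (equal for most $\alpha$, and increased by $r(n)$ for the unique symbol whose image under $f_{\lambda^{(n)},\pi^{(n)}}$ lands in $I^{(n)} \setminus I^{(n+1)}$) and confirm that the genuinely new positive-level atoms are precisely the stacked intervals $f_{\lambda,\pi}^j(I^{(n)} \setminus I^{(n+1)})$ listed above. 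With this combinatorial bookkeeping settled, the inductive step closes immediately.
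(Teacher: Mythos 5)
Your argument is correct but follows a genuinely different route from the paper's. The paper argues by contradiction: if some $J_k$ were split by an endpoint of $I^{(m-1)}\setminus I^{(m)}$, then the first-return time $r^n_{\lambda,\pi}$ would fail to be constant on $I^{(n)}_{\beta_{0,n}}$; the verification of this implication is left to the reader. You instead establish an explicit structural auxiliary claim --- every positive-level Rokhlin tower atom $f_{\lambda,\pi}^k(I_\alpha^{(n)})$, with $0<k<r^n_{\lambda,\pi}(I_\alpha^{(n)})$, lies inside a single $I^{(j-1)}\setminus I^{(j)}$ --- proved by induction on $n$, and deduce the lemma from $J_k\subseteq f_{\lambda,\pi}^l(I_{\beta_{0,n}}^{(n)})$ together with pairwise disjointness of the pieces $I^{(j-1)}\setminus I^{(j)}$. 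Your route is longer but it surfaces the Rokhlin-tower refinement mechanism that the paper's terse contradiction argument uses only implicitly, and the auxiliary claim is a clean reusable statement about the tower. Two small points to tidy, neither affecting the soundness of the approach: (i) the paper defines $J^{(n+1)}$ as the \emph{position}-ordering of $\{f_{\lambda,\pi}^k(I^{(n)}\setminus I^{(n+1)})\}_{0\le k<r(n)}$, so $J_k$ is generally $f_{\lambda,\pi}^l(I^{(n)}\setminus I^{(n+1)})$ for some $l\ne k$; you should carry the iterate index $l$ rather than writing $f_{\lambda,\pi}^k(I_{\beta_{0,n}}^{(n)})$; (ii) in the inductive step, the step-$(n+1)$ positive-level atoms that are not newly stacked are in general proper \emph{subsets} of step-$n$ atoms rather than equal to them (the winner's column shrinks in type $0$), and the newly stacked atoms should include the offset $j=0$, i.e.\ $I^{(n)}\setminus I^{(n+1)}$ itself sitting at a positive level of the step-$(n+1)$ tower, which trivially lies inside $I^{(n)}\setminus I^{(n+1)}$.
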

	
	\begin{proof}
		Assume, by contradiction, that there is a $J_k=J_k' \sqcup J_k'' \in J^{(n+1)}$ such that $J_k'\cap (I^{(m-1)}\backslash I^{(m)})= \emptyset$ and $J_k'' \subseteq I^{(m-1)}\backslash I^{(m)}$. Take $l \geq 0$ such that $f_{\lambda,\pi}^{-l}(J_k') \subseteq I^{(n)}\backslash I^{(n+1)} $. It is simple to check, given two points $x' \in f_{\lambda,\pi}^{-l}(J_k')$ and $x'' \in I_{\beta_{0,n}}^{(n)} \backslash f_{\lambda,\pi}^{-l}(J_k')$, that $r_{\lambda,\pi}^{n} (x')\neq r_{\lambda,\pi}^{n} (x'')$, which, as $I^{(n)}\backslash I^{(n+1)} \subseteq I_{\beta_{0,n}}^{(n)}$  contradicts the fact that  $r_{\lambda,\pi}^{n}$ is constant on $I_{\beta_{0,n}}^{(n)}$.
	\end{proof}

	\begin{lemma}\label{LIET1}
		For all $n \geq 0$ we have
		$$ J^{(n+1)} \cap (I^{(n)} \backslash I^{(n+1)}) = \{  I^{(n)}\backslash I^{(n+1)}   \}, $$
		furthermore for all $0<m\leq n$ we have
		\begin{equation}\label{LIET1eq1}
		J^{(n+1)} \cap (I^{(m-1)} \backslash I^{(m)}) = f_{\lambda^{(m-1)},\pi^{(m-1)}}\left(  J^{(n+1)}\cap I_{\beta(m-1)}^{(m)}    \right)  .
		\end{equation}
		In particular there exists a $k'(m)>0$ such that for all $k(m)  \leq k < k(m-1) $ we have
		\begin{equation}\label{LIET1eq2}
		J_k=  f_{\lambda^{(m-1)},\pi^{(m-1)}}\left(  J_{k-k'(m)}     \right) .
		\end{equation}
	\end{lemma}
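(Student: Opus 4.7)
The plan is to prove part (a) directly from the first return time definition, and to establish parts (b) and (c) simultaneously using a key geometric fact about Rauzy induction relating the loser atom at level $m$ to the sliver removed by the induction step.

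For part (a), I would observe that $J_0 = I^{(n)}\backslash I^{(n+1)}$ is trivially an element of $J^{(n+1)}$ lying in $I^{(n)}\backslash I^{(n+1)}$, and no other iterate is: since $I^{(n)}\backslash I^{(n+1)} \subseteq I_{\beta_{0,n}}^{(n)}$ and $r(n)$ is by definition the first return time of $I_{\beta_{0,n}}^{(n)}$ to $I^{(n)}$, the iterates $f_{\lambda,\pi}^k(I^{(n)}\backslash I^{(n+1)})$ for $1\leq k < r(n)$ lie outside $I^{(n)}$ and therefore also outside $I^{(n)}\backslash I^{(n+1)}$. This yields $J^{(n+1)}\cap(I^{(n)}\backslash I^{(n+1)}) = \{I^{(n)}\backslash I^{(n+1)}\}$.

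For parts (b) and (c), the essential geometric fact is that $f_{\lambda^{(m-1)},\pi^{(m-1)}}$ restricts to a bijection from the loser atom $I_{\beta(m-1)}^{(m)}$ onto the removed sliver $I^{(m-1)}\backslash I^{(m)}$, and moreover equals $f_{\lambda,\pi}^{k'(m)}$ on that atom for a uniquely determined integer $k'(m)\geq 1$, namely the first return time of $I_{\beta(m-1)}^{(m)}$ to $I^{(m-1)}$ under $f_{\lambda,\pi}$. I would verify this by a case analysis on $\varepsilon(m-1)$. Since $\pi_1^{(m-1)}(\beta_{1,m-1}) = d$, the image of $I_{\beta_{1,m-1}}^{(m-1)}$ under $f_{\lambda^{(m-1)},\pi^{(m-1)}}$ is the rightmost sub-interval of $I^{(m-1)}$ of length $\lambda_{\beta_{1,m-1}}^{(m-1)}$. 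In type $0$, $I_{\beta(m-1)}^{(m)} = I_{\beta_{1,m-1}}^{(m-1)}$ and this image is exactly $I^{(m-1)}\backslash I^{(m)}$; in type $1$, using \eqref{eqiet9} and \eqref{x0x1} one checks that $I_{\beta(m-1)}^{(m)} = I_{\beta_{0,m-1}}^{(m)}$ occupies the rightmost $\lambda_{\beta_{0,m-1}}^{(m-1)}$-long portion of $I_{\beta_{1,m-1}}^{(m-1)}$, whose image is $I^{(m-1)}\backslash I^{(m)}$.

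Given this fact, (b) follows immediately: for any $J = f_{\lambda,\pi}^l(I^{(n)}\backslash I^{(n+1)}) \in J^{(n+1)}\cap I_{\beta(m-1)}^{(m)}$, the image $f_{\lambda^{(m-1)},\pi^{(m-1)}}(J) = f_{\lambda,\pi}^{l+k'(m)}(I^{(n)}\backslash I^{(n+1)})$ is an element of $J^{(n+1)}$ contained in $I^{(m-1)}\backslash I^{(m)}$, and invertibility yields the reverse inclusion. For (c), since $f_{\lambda^{(m-1)},\pi^{(m-1)}}$ acts as an orientation-preserving translation on $I_{\beta(m-1)}^{(m)}$, it preserves the left-to-right ordering of its inputs. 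Combined with the observation that $J^{(n+1)}\cap I_{\beta(m-1)}^{(m)}$ consists of elements contained in the single interval $I_{\beta(m-1)}^{(m)}$ (so they are contiguous in the $J^{(n+1)}$ position-ordering, by an argument analogous to Lemma \ref{LIET2}), the preimage indices form a contiguous range shifted from $[k(m), k(m-1))$ by a constant $k'(m) > 0$; positivity holds because $I_{\beta(m-1)}^{(m)}\subseteq I^{(m)}$ lies strictly to the left of $I^{(m-1)}\backslash I^{(m)}$ in $I$.

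The principal obstacle is the geometric verification that $f_{\lambda^{(m-1)},\pi^{(m-1)}}$ maps the loser atom bijectively onto the removed sliver; while natural, this requires careful case-by-case tracking of atom positions and lengths after each type of Rauzy induction step. A secondary subtlety for (c) is that the elements of $\mathcal{J}(n+1)$ are initially indexed by the iteration count of $f_{\lambda,\pi}$, whereas $J^{(n+1)}$ uses the spatial left-to-right ordering; the reconciliation relies on the orientation-preserving nature of $f_{\lambda^{(m-1)},\pi^{(m-1)}}$ on $I_{\beta(m-1)}^{(m)}$ and on the fact that elements of $J^{(n+1)}$ lying in a single atom form a contiguous block in the position-ordering.
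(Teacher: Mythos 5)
Your proposal is correct and takes essentially the same approach as the paper: part (a) via $J_0=I^{(n)}\backslash I^{(n+1)}$, and parts (b)--(c) by identifying $f_{\lambda^{(m-1)},\pi^{(m-1)}}$ restricted to $I_{\beta(m-1)}^{(m)}$ as an orientation-preserving translation onto $I^{(m-1)}\backslash I^{(m)}$ realized by a fixed number $k'(m)$ of iterates of $f_{\lambda,\pi}$, with Lemma \ref{LIET2} guaranteeing that whole terms of $J^{(n+1)}$ (not just their intersections) lie in the relevant slices. The paper states the pullback version (preimage of a term in $I^{(m-1)}\backslash I^{(m)}$ lies in $I_{\beta(m-1)}^{(m)}$) while you push forward, but the two directions are equivalent and rest on the same geometric fact and the same invocation of Lemma \ref{LIET2}.
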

	
	\begin{proof}
		Note that we have $J_0 = I^{(n)}\backslash I^{(n+1)}$ from whence the first statement follows.
		
		Assume that $J^{(n+1)}\cap I^{(m-1)}\backslash I^{(m)} \neq \emptyset$, as otherwise the result holds trivially, and take $J_k \in J^{(n+1)}$ such that $J_k \cap (I^{(m-1)}\backslash I^{(m)}) \neq \emptyset$. By Lemma \ref{LIET2} we have $J_k \subseteq I^{(m-1)} \backslash  I^{(m)}$ and thus  it follows from the definition of $J^{(n+1)}$ that there is an $l\geq1$ such that $f_{\lambda^{(m-1)},\pi^{(m-1)}}^{l}(I^{(n)}\backslash I^{(n+1)})=J_k$. Furthermore the pre-image by $f_{\lambda^{(m-1)},\pi^{(m-1)}}$ of $J_k$ is contained in $I_{\beta(m-1)}^{(m)}$ and it is a term $J_{k'}$, with $k'<k$, in the sequence $J^{(n+1)}$. The difference $k'(m)=k-k'$ is independent of the choice of $J_k$, from which \eqref{LIET1eq2} follows.
		Observing that $J^{(n+1)}\cap I_{\beta(m-1)}^{(m)} = \{J_k\}_{k \in K}$, with $K=\{  k(m)-k'(m), ..., k(m-1)-k'(m)   \}$,  and combining this with \eqref{LIET1eq2} we obtain \eqref{LIET1eq1}, thus completing the proof.
	\end{proof}

\section{Existence of a quasi-embedding}

In this section we introduce the notion of \textit{quasi-embedding} and use it to relate the dynamics of $f_{\lambda^{(m)},\pi^{(m)}}$ with that of $T^{(n,m)}$ for any $n\geq 0$ and $0\leq m\leq n$.

We say that $f_{\lambda^{(m)},\pi^{(m)}}$ is \textit{quasi-embedded} into $T^{(n,m)}$, or that $\gamma^{(n)}$ is a \textit{quasi-embedding} of $f_{\lambda^{(m)},\pi^{(m)}}$ into $T^{(n,m)}$, for $x \in I' \subseteq I$ if
\begin{equation*}\label{X1'}
T^{(n,m)}(\gamma^{(n)}(x))= \gamma^{(n)}(f_{\lambda^{(m)},\pi^{(m)}}(x)).
\end{equation*}
Intuitively this means that $T^{(n,m)}$ and $f_{\lambda^{(m)},\pi^{(m)}}$ are \textit{nearly} topologically conjugate, the conjugacy failing only for points in  $I \backslash I'$.

The following theorem establishes that $T_{\alpha}^{(n,m)} = 	{\hat T}_{\alpha}^{(n,m)}$ and that  $\gamma^{(n)}$ is a quasi-embedding of $f_{\lambda^{(m)},\pi^{(m)}}$ into $T^{(n,m)}$ except for points in a subinterval which decreases with  $n$.
	\begin{theorem}\label{fund_lemma}
	For all  $n\geq 0$ and  $0\leq m \leq n$, $\gamma^{(n)}$ is a quasi-embedding of $f_{\lambda^{(m)},\pi^{(m)}}$ into $T^{(n,m)}$ for $x \in I^{(m)}\backslash f_{\lambda^{(m)},\pi^{(m)}}^{-1}\left(  I^{(n)}     \right) $. Furthermore for all $\alpha \in \mathcal{A}$ and $z \in \C$ we have
	\begin{equation}\label{X2'}
		T_{\alpha}^{(n,m)}(z) = {\hat T}_{\alpha}^{(n,m)}(z).
	\end{equation}
	\end{theorem}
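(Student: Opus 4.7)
My approach is a joint backward induction on $m$, descending from $m = n$ to $m = 0$, simultaneously establishing both the identity $T_{\alpha}^{(n,m)} = \hat{T}_{\alpha}^{(n,m)}$ and the quasi-embedding property. The base case $m = n$ is immediate: the identity holds by the defining equation \eqref{Tnn}, and since $f_{\lambda^{(n)},\pi^{(n)}}$ is a bijection of $I^{(n)}$ onto itself, the exceptional set $I^{(n)}\setminus f_{\lambda^{(n)},\pi^{(n)}}^{-1}(I^{(n)})$ is empty, so the quasi-embedding claim is vacuously true.

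For the inductive step from level $m$ to level $m-1$, I would split into the cases $\varepsilon(m-1) = 0$ and $\varepsilon(m-1) = 1$. In each case, before applying the respective endpoint lemma (Lemma \ref{LXi0} or Lemma \ref{LXi1}), I would verify its hypothesis: $\xi_{d-1}^{n,m-1} = \gamma_{1,d-1}^{n,m-1}$ in type 0, or $\xi_{d-1}^{n,m-1} = \xi_{d-1}^{n,m}$ in type 1. Once the hypothesis is verified, the corresponding lemma yields $\hat{T}_{\alpha}^{(n,m-1)} = \hat{T}_{\alpha}^{(n,m)}$ for the generic indices $\alpha$, while Lemma \ref{LA} supplies the analogous identity for the exceptional index ($\beta_{1,m-1}$ in type 0, or $\beta_{0,m-1}$ in type 1). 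Substituting these into the recursive definitions \eqref{Tnm0}, \eqref{Tnm1} together with the inductive hypothesis $T_{\alpha}^{(n,m)} = \hat{T}_{\alpha}^{(n,m)}$ yields $T_{\alpha}^{(n,m-1)} = \hat{T}_{\alpha}^{(n,m-1)}$, completing \eqref{X2'} at level $m-1$.

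For the quasi-embedding claim at level $m-1$, I would exploit that $f_{\lambda^{(m)},\pi^{(m)}}$ is the first return map of $f_{\lambda^{(m-1)},\pi^{(m-1)}}$ to $I^{(m)}$: a point $x \in I^{(m-1)}\setminus f_{\lambda^{(m-1)},\pi^{(m-1)}}^{-1}(I^{(n)})$ either returns to $I^{(m)}$ in a single step (in which case $f_{\lambda^{(m-1)},\pi^{(m-1)}}(x) = f_{\lambda^{(m)},\pi^{(m)}}(x)$) or must first traverse $I^{(m-1)}\setminus I^{(m)}$ before returning. The recursive definitions \eqref{Tnm0}, \eqref{Tnm1} of $T_{\alpha}^{(n,m-1)}$ mirror this Rauzy-induction decomposition geometrically, so combining them with the quasi-embedding at level $m$ and the now-established identity $T = \hat{T}$ propagates the quasi-embedding to level $m-1$, with the exceptional set $I^{(m-1)}\setminus f_{\lambda^{(m-1)},\pi^{(m-1)}}^{-1}(I^{(n)})$ tracking precisely the points whose forward orbit requires an excursion that exits $I^{(n)}$.

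The main obstacle I anticipate is rigorously verifying the endpoint compatibility hypotheses of Lemmas \ref{LXi0} and \ref{LXi1} at each level. This requires tracking how the breaking operators $\mathfrak{Br}(\theta_{\beta_{1,k-1}}^{(k-1)}, J^{(k)})$ used to construct $\gamma^{(n)}$ from $\gamma^{(0)}(x) = x$ cumulatively displace the relevant endpoints $\gamma_{\varepsilon,j}^{n,m-1}$; the combinatorial description of $J^{(n+1)} \cap (I^{(m-1)}\setminus I^{(m)})$ furnished by Lemma \ref{LIET1}, together with the explicit translation terms $\overline{\epsilon}_k$, $\underline{\epsilon}_k$ from \eqref{breq1}--\eqref{breq2}, is the key tool for this bookkeeping. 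It may be convenient to incorporate these endpoint identities as a third clause in the inductive hypothesis, propagating them in tandem with the two main assertions.
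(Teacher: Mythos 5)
Your high-level plan --- base case at $m=n$, backward induction on $m$, a type split, and use of Lemmas \ref{LA}, \ref{LXi0}, \ref{LXi1} to propagate $T_{\alpha}^{(n,m)}=\hat T_{\alpha}^{(n,m)}$ --- captures part of the paper's argument, but it misses the essential structure. The paper does \emph{not} run a single backward induction on $m$ at fixed $n$. It runs a joint induction over the pair $(n,m)$: hypothesis (H1) assumes the full theorem for $\gamma^{(n)}$ at every level $m'\leq n$, hypothesis (H2) is the backward-on-$m$ hypothesis for $\gamma^{(n+1)}$, and the target is $\gamma^{(n+1)}$ at level $m-1$; the base case for the inner induction is not just $m=n+1$ but also $m=n$, which requires the separate Lemma \ref{lfund_lemma1}. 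The outer induction on $n$ is indispensable for the quasi-embedding half, and here is the concrete obstruction to your plan.

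To descend from level $m$ to level $m-1$ you must establish the quasi-embedding on the new portion $I^{(m-1)}\setminus I^{(m)}$ of the interval. Take $x\in I_{\beta_{1,m-1}}^{(m-1)}$ with $y=f_{\lambda^{(m-1)},\pi^{(m-1)}}(x)\in I^{(m-1)}\setminus I^{(m)}$; unwinding \eqref{Tnm0} (say) reduces the claim to an identity of the form $T^{(n,m)}_{\beta_{0,m-1}}(\gamma^{(n)}(y))=\gamma^{(n)}(f_{\lambda^{(m-1)},\pi^{(m-1)}}(y))$ at a point $y\notin I^{(m)}$, which the level-$m$ quasi-embedding simply does not supply. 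The paper resolves this by exploiting that $\gamma^{(n+1)}=\mathfrak{Br}(\theta^{(n)}_{\beta_{1,n}},J^{(n+1)})\cdot\gamma^{(n)}$: Lemmas \ref{LKa} and \ref{LKb}, entirely absent from your plan, take as input the full level-$n$ result at level $m-1$ (this is exactly what (H1) provides and is why the outer induction is necessary), the identity $T^{(n,m-1)}=\hat T^{(n,m-1)}$, and a local seed obtained from Lemma \ref{LA}, and then bootstrap the $\gamma^{(n+1)}$-quasi-embedding piece by piece across the ordered intervals $J_k^{(n+1)}$, using the combinatorics of Lemma \ref{LIET1}. Finally, the logical order in your sketch is also inverted: in the paper the endpoint compatibilities $\xi_{d-1}^{n+1,m-1}=\gamma_{1,d-1}^{n+1,m-1}$ (type $0$) and $\xi_{d-1}^{n+1,m-1}=\xi_{d-1}^{n+1,m}$ (type $1$) are \emph{deduced} from the already-established quasi-embedding on $I_{\beta_{1,m-1}}^{(m-1)}$ (Step 4 of Case 1), rather than verified in advance by directly tracking the breaking displacements --- the direct verification for general $m<n$ would be intractable, and the indirect route via the quasi-embedding is what makes the hypotheses of Lemmas \ref{LXi0} and \ref{LXi1} accessible.
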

The remainder of this section is reserved for the proof of several lemmas culminating in the proof of Theorem \ref{fund_lemma}.

\vspace*{0.2in}


The first lemma is a particular case of Theorem \ref{fund_lemma} where $n \geq 1$ and $m=n-1$. We separate the cases $\varepsilon(m-1)=0,1$ as $T_{\alpha}^{(n,m)}$ is given by different expressions in each. For the case $\varepsilon(m-1)=0$ we use Lemma \ref{LA} to obtain an expression for $T_{\beta_{1,n-1}}^{(n,n-1)}$ and distinguish between the cases $\alpha=\beta_{1,n-1}$ and $\alpha \neq \beta_{1,n-1}$, as the first can be addressed directly while the second requires the use of Lemma \ref{LXi0}. In the case $\varepsilon(m-1)=1$ we also distinguish between $\alpha=\beta_{1,n-1}$ and $\alpha \neq \beta_{1,n-1}$ as, unlike the first, the latter case requires the use of Lemma \ref{LXi1}.

\begin{lemma}\label{lfund_lemma1}
	Let  $n\geq 1$ and $\alpha \in \mathcal{A}$. Then $\gamma^{(n)}$ is a quasi-embedding of $f_{\lambda^{(n-1)},\pi^{(n-1)}}$ into $T^{(n,n-1)}$ for $x \in I^{(n-1)}\backslash f_{\lambda^{(n-1)},\pi^{(n-1)}}^{-1}\left(  I^{(n)}     \right) $. Furthermore for all $z \in \C$ we have	
	\begin{equation}\label{X2'l}
	T_{\alpha}^{(n,n-1)}(z) = 	{\hat T}_{\alpha}^{(n,n-1)}(z).
	\end{equation}
\end{lemma}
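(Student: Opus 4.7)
The proof plan is to split into the two cases $\varepsilon(n-1)=0$ and $\varepsilon(n-1)=1$, and in each case reduce the lemma to Lemmas~\ref{LA}, \ref{LXi0}, and~\ref{LXi1} via a key boundary identity at step $n$.

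The crux is to verify the hypothesis of Lemma~\ref{LXi0} (resp.\ Lemma~\ref{LXi1}) when $m=n$, namely $\xi_{d-1}^{n,n-1}=\gamma_{1,d-1}^{n,n-1}$ (resp.\ $\xi_{d-1}^{n,n-1}=\xi_{d-1}^{n,n}$). This encodes geometrically the fact that the rotation angle $\theta_{\beta_{1,n-1}}^{(n-1)}$ used in the $n$-th application of the breaking operator is exactly what is required for the image of the broken segment $J_0^{(n)}=I^{(n-1)}\setminus I^{(n)}$ to align with the endpoint structure dictated by the permutation $\pi^{(n-1)}$ and the putative map $\hat T^{(n,n-1)}$. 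I would prove it by expanding \eqref{xis} at $j=d-1$, using $(\pi_1^{(n-1)})^{-1}(d)=\beta_{1,n-1}$ and $\hat\pi^{(n-1)}(d)=\pi_0^{(n-1)}(\beta_{1,n-1})$, and evaluating $\gamma^{(n)}$ at the relevant endpoints via \eqref{breq1}-\eqref{breq3}. The continuity relation $\gamma^{(n)}(x_{1,d-1}^{(n-1)})=\gamma^{(n-1)}(x_{1,d-1}^{(n-1)})$, valid because $x_{1,d-1}^{(n-1)}=y_0$ is the left endpoint of the first break segment, reduces the identity to a finite additive relation, which is verified by tracking the accumulated shifts $\overline\epsilon_k,\underline\epsilon_k$ over $J^{(n)}$ and invoking the combinatorial description given by Lemmas~\ref{LIET1}-\ref{LIET2}, in particular the fact that the preimage of $J_0^{(n)}$ under $f_{\lambda^{(n-1)},\pi^{(n-1)}}$ coincides with the last break segment $J_{r(n-1)-1}^{(n)}$.

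Once the boundary identity is in hand, Lemma~\ref{LXi0} (or~\ref{LXi1}) gives $\hat T_\alpha^{(n,n-1)}=\hat T_\alpha^{(n,n)}$ on the respective subsets of $\mathcal{A}$, and combining this with $T_\alpha^{(n,n)}=\hat T_\alpha^{(n,n)}$ from \eqref{Tnn} and with the passive branches of \eqref{Tnm0}-\eqref{Tnm1} gives \eqref{X2'l} on those symbols. For the remaining ``active'' symbol I would use the non-trivial branch of \eqref{Tnm0}-\eqref{Tnm1} and Lemma~\ref{LA} to rewrite the composition as an affine map with rotation factor $e^{i\theta_{\beta_{1,n-1}}^{(n-1)}}$, identifying its base point and translation with those prescribed by \eqref{hatTnm} via \eqref{xis} and the boundary identity. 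For the quasi-embedding claim, $f_{\lambda^{(n-1)},\pi^{(n-1)}}$ maps $I^{(n-1)}\setminus f^{-1}(I^{(n)})$ onto $J_0^{(n)}$; on the source, $T^{(n,n-1)}=\hat T^{(n,n-1)}$ rotates by $\theta_{\beta_{1,n-1}}^{(n-1)}$, while on the target the breaking operator \eqref{breq1} rotates $\gamma^{(n-1)}$ by the same angle. Since $f_{\lambda^{(n-1)},\pi^{(n-1)}}$ restricts to a translation on each $I_\alpha^{(n-1)}$, substitution yields the required conjugacy $T^{(n,n-1)}(\gamma^{(n)}(x))=\gamma^{(n)}(f_{\lambda^{(n-1)},\pi^{(n-1)}}(x))$ on this set.

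The main obstacle is the boundary identity: establishing it requires careful bookkeeping of the shifts introduced by the breaking operator along all segments of $J^{(n)}$ and precise alignment between the combinatorial data of $J^{(n)}$ and the renormalized partition $\{I_\alpha^{(n-1)}\}$. Once this alignment is in place, the rest of the argument is routine algebraic substitution using the preceding structural lemmas.
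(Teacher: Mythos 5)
Your overall architecture matches the paper's — you split on $\varepsilon(n-1)\in\{0,1\}$, use Lemma~\ref{LA} (case $0$), Lemmas~\ref{LXi0} and~\ref{LXi1} for the passive symbols, and the definitions \eqref{Tnn}, \eqref{Tnm0}, \eqref{Tnm1} to reduce the active symbol. However, you invert the logical order relative to the paper: you want to establish the boundary identity $\xi_{d-1}^{n,n-1}=\gamma_{1,d-1}^{n,n-1}$ (resp.\ $\xi_{d-1}^{n,n-1}=\xi_{d-1}^{n,n}$) \emph{first} by direct computation and derive the quasi-embedding afterward, whereas the paper proves the quasi-embedding relation first for $x\in I^{(n-1)}_{\beta_{1,n-1}}\setminus f_{\lambda^{(n-1)},\pi^{(n-1)}}^{-1}(I^{(n)})$ (equations \eqref{e4}--\eqref{e5}, resp.\ \eqref{e7}--\eqref{e8}), extends it by continuity to the right endpoint of $I_{\beta_{1,n-1}}^{(n-1)}$ to get $T_{\beta_{1,n-1}}^{(n,n-1)}=\hat T_{\beta_{1,n-1}}^{(n,n-1)}$, and only then reads off the boundary identity as a consequence. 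Either order can work; the paper's is cleaner because the boundary identity drops out automatically.

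The genuine problem is your proposed method for verifying the boundary identity, which contains a concrete error. You claim the argument ``invokes \ldots the fact that the preimage of $J_0^{(n)}$ under $f_{\lambda^{(n-1)},\pi^{(n-1)}}$ coincides with the last break segment $J_{r(n-1)-1}^{(n)}$.'' This is false. That preimage is a subinterval of $I_{\beta_{1,n-1}}^{(n-1)}$, which lies entirely inside $I^{(n)}$, while every break segment $J_k^{(n)}$ with $k\ge 1$ lies outside $I^{(n-1)}$ (they are $f_{\lambda,\pi}$-iterates of $I^{(n-1)}\setminus I^{(n)}$ before first return); so the preimage is disjoint from all of them. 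Relatedly, your plan to ``track the accumulated shifts $\overline\epsilon_k,\underline\epsilon_k$ over $J^{(n)}$'' is unnecessary: the boundary identity only involves $\gamma^{(n)}$ evaluated at points of $\overline{I^{(n-1)}}$ (the endpoints $x_{0,d}^{(n-1)}$, $x_{1,d-1}^{(n-1)}$, $x_{0,\hat\pi^{(n-1)}(d)\pm 0,1}^{(n-1)}$), and only the first break segment $J_0^{(n)}=I^{(n-1)}\setminus I^{(n)}$ meets $\overline{I^{(n-1)}}$, so only $\overline\epsilon_0$ enters. The correct simple facts you should use instead are: (i) $\gamma^{(n-1)}|_{\overline{I^{(n-1)}}}=\mathrm{id}$, since each earlier application of the breaking operator fixes $[0,|I^{(m)}|)$ and $|I^{(n-1)}|\le|I^{(m)}|$ for $m<n-1$; and (ii) $\gamma^{(n)}$ on $J_0^{(n)}$ is $x\mapsto x\,e^{i\theta_{\beta_{1,n-1}}^{(n-1)}}+|I^{(n)}|(1-e^{i\theta_{\beta_{1,n-1}}^{(n-1)}})$, which is the paper's \eqref{e4a}. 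Plugging these into \eqref{xis} at $j=d-1$ does give the boundary identity after using $\lambda_{\beta_{1,n-1}}^{(n-1)}=|I^{(n-1)}|-|I^{(n)}|$ in the type-$0$ case. With this correction your strategy goes through, but as written the bookkeeping you propose would fail because of the incorrect combinatorial claim.
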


\begin{proof}
	
We distinguish the cases $\varepsilon(n-1)=0$ and $\varepsilon(n-1)=1$.

Given $n\geq 1$ assume  $\varepsilon(n-1)=0$ . 
Lemma \ref{LA} for $m=n$ combined with \eqref{Tnm0} gives
\begin{equation}\label{e4}
T_{\beta_{1,n-1}}^{(n,n-1)} (z)= e^{i\theta_{\beta_{1,n-1}}^{(n-1)}}\left( z - \gamma_{0,{\hat  \pi}^{(n-1)}(d)-1}^{n,n-1}    \right)   +  \gamma_{1,d-1}^{n,n-1}  .
\end{equation} 
for all $z \in \C$.

By Lemma \ref{LIET1}, $J^{(n)} =\{ I^{(n-1)} \backslash I^{(n)} \}$.
Let $x \in I^{(n-1)}_{\beta_{1,n-1}}\backslash f_{\lambda^{(n-1)},\pi^{(n-1)}}^{-1}(I^{(n)})$. Since we have $ f_{\lambda^{(n-1)},\pi^{(n-1)}}(x) \in I^{(n-1)} \backslash I^{(n)}$, it follows from our definitions of breaking operator and breaking sequence that
\begin{equation}\label{e4a}
\gamma^{(n)}(f_{\lambda^{(n-1)},\pi^{(n-1)}}(x)) = f_{\lambda^{(n-1)},\pi^{(n-1)}}(x) e^{i {\theta}_{\beta_{1,n-1}}^{(n-1)}} + |I^{(n)}|(1-e^{i {\theta}_{\beta_{1,n-1}}^{(n-1)}}),
\end{equation}
since $x_{1,d-1}^{(n-1)}= |I^{(n)}|$,  $\gamma_{0, \hpi^{(n-1)}(d)-1}^{n,n-1} = x_{0, \hpi^{(n-1)}(d)-1}^{(n-1)}$ as $\gamma^{(n)}(x)=x$ and $|I^{(n)}|=\gamma_{1,d-1}^{n,n-1}$, this gives
\begin{equation}\label{e5}
\gamma^{(n)}(f_{\lambda^{(n-1)},\pi^{(n-1)}}(x))-e^{i {\theta}_{\beta_{1,n-1}}^{(n-1)}} \gamma^{(n)}(x) = \gamma_{1,d-1}^{n,n-1} - e^{i {\theta}_{\beta_{1,n-1}}^{(n-1)}}\gamma_{0, \hpi^{(n-1)}(d)-1}^{n,n-1}.
\end{equation}
As $I_{\alpha}^{(n-1)}\backslash f_{\lambda^{(n-1)},\pi^{(n-1)}}^{-1}(I^{(n)}) = \emptyset$ for $\alpha \neq \beta_{1,n-1}$, \eqref{e4} together with \eqref{e5} gives
\begin{equation*}\label{X1'l}
	T^{(n,n-1)}(\gamma^{(n)}(x))= \gamma^{(n)}(f_{\lambda^{(n-1)},\pi^{(n-1)}}(x)),
\end{equation*}
which proves that the map $\gamma^{(n)}$ is a quasi-embedding of $f_{\lambda^{(n-1)},\pi^{(n-1)}}$ into $T^{(n,n-1)}$ for $x \in I_{\alpha}^{(n-1)}\backslash f_{\lambda^{(n-1)},\pi^{(n-1)}}^{-1}\left(  I^{(n)}     \right) $.

By continuity of $f_{\lambda^{(n-1)},\pi^{(n-1)}}$ in $I_{\beta_{1,n-1}}^{(n-1)}= [x_{0,\hpi^{(n-1)}(d)-1}^{(n-1)},x_{0,\hpi^{(n-1)}(d)}^{(n-1)})$ and from \eqref{e5} we get
\begin{equation*}
\gamma_{1,d-1}^{n,n-1} - e^{i {\theta}_{\beta_{1,n-1}}^{(n-1)}}\gamma_{0, \hpi^{(n-1)}(d)-1}^{n,n-1} = \gamma_{1,d}^{n,n-1} - e^{i {\theta}_{\beta_{1,n-1}}^{(n-1)}}\gamma_{0, \hpi^{(n-1)}(d)}^{n,n-1}, 
\end{equation*}
which combined with \eqref{e4}, \eqref{xis} and \eqref{hatTnm} gives \eqref{X2'l} for $\alpha = \beta_{1,n-1}$.

Since $\xi_{d-1}^{n,n-1}=\gamma_{1,d-1}^{n,n-1}$, by Lemma \ref{LXi0} we prove the second statement in our lemma for all $\alpha \in \mathcal{A}$.

Now assume $\varepsilon(n-1)=1$. 
It follows directly from our definitions of 
${\hat T}_{\alpha}^{(n,m)} (z)$, $T_{\alpha}^{(n,n)} (z)$ and  $\xi_j^{n,m}$ using \eqref{Tnm1} that
\begin{equation}\label{e7}
{T}_{\beta_{1,n-1}}^{(n,n-1)} (z)=  e^{ i \theta_{\beta_{1,n-1}}^{(n-1)} }   \left( z - \gamma_{0, \hpi^{(n)}(d)  }^{n,n}   \right)   +   \gamma_{ 1,d }^{n,n},
\end{equation}
for all $z \in \C$. Again, in this case we also have 
$J^{(n)}=\{  I^{(n-1)} \backslash I^{(n)}  \}$ and we can use \eqref{e4a} as before which since $\theta_{\beta_{1,n-1}}^{(n-1)}  =  \theta_{\beta_{1,n}}^{(n)}$, $\gamma_{1,d}^{n,n} = x_{1,d}^{(n)}$ and  $\gamma_{0,\hpi^{(n)}(d)}^{n,n}= x_{0,\hpi^{(n)}(d)}^{(n)}$,  gives
\begin{equation}\label{e8}
\gamma^{(n)}(f_{\lambda^{(n-1)},\pi^{(n-1)}}(x))-e^{i {\theta}_{\beta_{1,n-1}}^{(n-1)}} \gamma^{(n)}(x) = \gamma_{1,d}^{n,n} - e^{i {\theta}_{\beta_{1,n-1}}^{(n-1)}}\gamma_{0, \hpi^{(n)}(d)}^{n,n},
\end{equation}
for all $x \in  I_{\beta_{1,n-1}}^{(n-1)} \backslash f_{\lambda^{(n-1)},\pi^{(n-1)}}^{-1}(I^{(n)})$. 
As $I_{\alpha}^{(n-1)}\backslash f_{\lambda^{(n-1)},\pi^{(n-1)}}^{-1}(I^{(n)}) = \emptyset$ for $\alpha \neq \beta_{1,n-1}$, combining \eqref{e7} and \eqref{e8} we prove the first statement in the lemma.

By continuity of $f_{\lambda^{(n-1)},\pi^{(n-1)}}$ in $I_{\beta_{1,n-1}}^{(n-1)}= [x_{0,\hpi^{(n-1)}(d)-1}^{(n-1)},x_{0,\hpi^{(n-1)}(d)}^{n-1})$ and from \eqref{e8} we 
can relate the image by $\gamma^{(n)}$ of the $d$-th endpoint of the partitions associated to $f_{\lambda^{(n-1)}, \pi^{(n-1)}}$ and  $f_{\lambda^{(n)}, \pi^{(n)}}$ as follows

\begin{equation*}
\gamma_{1,d}^{n,n-1} - e^{i {\theta}_{\beta_{1,n-1}}^{(n-1)}}\gamma_{0, \hpi^{(n-1)}(d)}^{n,n-1} = \gamma_{1,d}^{n,n} - e^{i {\theta}_{\beta_{1,n-1}}^{(n-1)}}\gamma_{0, \hpi^{(n)}(d)}^{n,n}.
\end{equation*}
As $\gamma_{1,d}^{n,n-1}= \xi_{d}^{n,n-1}$, this together with \eqref{e7} and \eqref{hatTnm}, proves  \eqref{X2'l} for $\alpha = \beta_{1,n-1}$. Using the definition of $\xi_j^{n,m}$ this can be rewritten as
$$ \xi_{d-1}^{n,n-1}= e^{i {\theta}_{\beta_{1,n-1}}^{(n-1)}  }   \left( \gamma^{n,n-1}_{0,  {\hat \pi}^{(n-1)}(d) -1  }   -  \gamma^{n,n}_{0,  {\hat \pi}^{(n)}(d) }   \right)+\xi_{d}^{n,n}, $$
and since $ \gamma^{n,n-1}_{0,  {\hat \pi}^{(n-1)}(d) -1  }  = \gamma^{n,n}_{0,  {\hat \pi}^{(n)}(d) -1  }   $, by \eqref{xis} and \eqref{A7}  we get that $\xi_{d-1}^{n,n-1} = \xi_{d-1}^{n,n}$. 
Hence by Lemma \ref{LXi1}, \eqref{hatTnm}, \eqref{Tnn} and \eqref{Tnm1} we prove the second statement in the lemma for all $\alpha \in \mathcal{A}$.
	
\end{proof}



Recall we denote by $J^{(n+1)}$ the ordering of $\left\{   f_{\lambda,\pi}^k\left(I^{(n)}\backslash I^{(n+1)}\right)   \right\}_{0\leq k<r(n)}$ , where $r(n) = r_{\lambda,\pi}^{n}\left(I_{\beta_{0,n}}^{(n)}\right)$.
Given $0<m\leq n+1$, by Lemma \ref{LIET1} there exist $0< k(m)< k(m-1)$ such that
$$J^{(n+1)}\cap (I^{(m-1)}\backslash I^{(m)}) = \{ J_k  \}_{k(m) \leq k < k(m-1)}, $$
and there exists $k'(m)>0$ such that
$$  J_k= f_{\lambda^{(m-1)},\pi^{(m-1)}}(J_{k-k'(m)}). $$
In particular we have the following relations
\begin{equation}\label{C5'}
[ x_{0,d}^{(m)}, y_{k(m)}   ) = f_{\lambda^{(m-1)},\pi^{(m-1)}}\left( [  f_{\lambda^{(m-1)},\pi^{(m-1)}}^{-1}(x_{0,d}^{(m)})  , y_{k(m)-k'(m)} )           \right),
\end{equation}
\begin{equation}\label{C6'}
[  y_{k(m+1)-1} + \Delta  , x_{0,d}^{(m-1)}   ) = f_{\lambda^{(m-1)},\pi^{(m-1)}}\left( [  y_{k(m+1)-1-k'(m)}  , x_{0,{\hat \pi}^{(m-1)}(d)}^{(m-1)} )           \right),
\end{equation}
recalling we denote $ J_k = [  y_k, y_{k}+\Delta   )$, for all $k(m)\leq k < k(m+1)$ we have
\begin{equation}\label{C7'}
J_k = f_{\lambda^{(m-1)},\pi^{(m-1)}}\left([ y_{k(m)-k'(m)} , y_{k(m)-k'(m)} + \Delta )           \right),
\end{equation}
and denoting $J_k'=[  y_k + \Delta, y_{k+1}  )$, for all $k(m)\leq k < k(m+1)-1$ we have
\begin{equation}\label{C8'}
J_k' = f_{\lambda^{(m-1)},\pi^{(m-1)}}\left([  y_{k(m)-k'(m)} + \Delta, y_{k(m)+1-k'(m)} )           \right).
\end{equation}


With the assumptions that $T_{\alpha}^{(n,m-1)} ={ \hat T}_{\alpha}^{(n,m-1)}$  and that $\gamma^{(n)}$ and $\gamma^{(n+1)}$ are quasi-embeddings of $f_{\lambda^{(m-1)},\pi^{(m-1)}}$ respectively into $T^{(n,m-1)}$ for all $x \in I^{(m-1)}\backslash f_{\lambda^{(m-1)},\pi^{(m-1)}}^{-1}\left(  I^{(n)}     \right) $ and into $T^{(n+1,m-1)}$ for some point in $I_{\beta_{1,m-1}}^{(m)}$, Lemmas \ref{LKa} and \ref{LKb} provide a means to extend the quasi-embedding $\gamma^{(n+1)}$ for points in a larger subinterval of $I_{\beta_{1,m-1}}^{(m)}$.
In particular provided that the quasi-embedding equation holds for some $y \in J'_{k-k'(m)-1}$, Lemma \ref{LKa} extends this quasi-embedding for points $x \in J'_{k-k'(m)-1}$ such that $x>y$. Lemma \ref{LKb} provides a similar extension for points in $J_{k-k'(m)}$.

\begin{lemma}\label{LKa}
	Given $n \geq 0$ and $0<m\leq n+1$ assume that $\gamma^{(n)}$ is a quasi-embedding of $f_{\lambda^{(m-1)},\pi^{(m-1)}}$ into $T^{(n,m-1)}$ for $x \in I^{(m-1)}\backslash f_{\lambda^{(m-1)},\pi^{(m-1)}}^{-1}\left(  I^{(n)}     \right) $, that for all $\alpha \in \mathcal{A}$ and $z \in \C$,
	\begin{equation}\label{C2'a}
	T_{\alpha}^{(n,m-1)} (z)={ \hat T}_{\alpha}^{(n,m-1)} (z).
	\end{equation}
	and that with ${\hat x}  = f_{\lambda^{(m-1)},\pi^{(m-1)}}^{-1}(x_{0,d}^{(m)})$  we have
	\begin{equation}\label{C5}
	T_{\beta_{1,m-1}}^{(n+1,m-1)}(z)=e^{i \theta_{\beta_{1,m-1}}^{(m-1)}}(z- \gamma^{(n+1)}({\hat x})) + \gamma^{(n+1)}(f_{\lambda^{(m-1)},\pi^{(m-1)}}({\hat x})).
	\end{equation}

	Furthermore for  $k(m)\leq k \leq k(m+1)$ assume that $\gamma^{(n+1)}$ is a quasi-embedding of $f_{\lambda^{(m-1)},\pi^{(m-1)}}$ into $T^{(n+1,m-1)}$ for $y \in I_{\beta_{1,m-1}}^{(m-1)} \cap J_{k-k'(m)-1}'$.
	
	If $y_k < x_{0,d}^{(m-1)}$, then $\gamma^{(n+1)}$ is a quasi-embedding of $f_{\lambda^{(m-1)},\pi^{(m-1)}}$ into $T^{(n+1,m-1)}$ for all $x \in [y, y_{k-k'(m)}]$. If $y_k \geq x_{0,d}^{(m-1)}$ then $\gamma^{(n+1)}$ is a quasi-embedding for $x \in [y, x_{0,d}^{(m-1)})$.
\end{lemma}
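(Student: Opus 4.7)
The plan is to reduce the conclusion to a statement about differences of $\gamma^{(n+1)}$ along $f := f_{\lambda^{(m-1)},\pi^{(m-1)}}$, and then to exploit the fact that on each ``gap'' interval $J'_j$ of the breaking data $J^{(n+1)}$ the curve $\gamma^{(n+1)}$ is merely a constant translate of $\gamma^{(n)}$, so that the quasi-embedding property of $\gamma^{(n)}$ can be transported to $\gamma^{(n+1)}$.

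\textbf{Step 1 (reformulation).} Writing $\theta := \theta_{\beta_{1,m-1}}^{(m-1)}$, hypothesis \eqref{C5} presents $T_{\beta_{1,m-1}}^{(n+1,m-1)}$ as a rotation by $e^{i\theta}$, while the assumed quasi-embedding of $\gamma^{(n+1)}$ at $y$ reads $T_{\beta_{1,m-1}}^{(n+1,m-1)}(\gamma^{(n+1)}(y)) = \gamma^{(n+1)}(f(y))$. Subtracting these two identities, the claim at a generic point $x$ of the stated interval becomes equivalent to
\begin{equation*}
\gamma^{(n+1)}(f(x)) - \gamma^{(n+1)}(f(y)) = e^{i\theta}\bigl(\gamma^{(n+1)}(x) - \gamma^{(n+1)}(y)\bigr).
\end{equation*}

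\textbf{Step 2 (translation structure on gap intervals).} By construction $\gamma^{(n+1)} = \mathfrak{Br}(\theta_{\beta_{1,n}}^{(n)}, J^{(n+1)}) \cdot \gamma^{(n)}$, so formula \eqref{breq1} gives $\gamma^{(n+1)}(z) = \gamma^{(n)}(z) + \underline{\epsilon}_j$ whenever $z$ belongs to the gap $J'_j = [y_j+\Delta, y_{j+1})$. Since $y \in J'_{k-k'(m)-1}$ and $[y, y_{k-k'(m)}) \subset J'_{k-k'(m)-1}$, and since by \eqref{C8'} $f$ sends this gap onto $J'_{k-1}$, the additive constants $\underline{\epsilon}_{k-k'(m)-1}$ and $\underline{\epsilon}_{k-1}$ cancel in the relevant differences, yielding
\begin{equation*}
\gamma^{(n+1)}(x) - \gamma^{(n+1)}(y) = \gamma^{(n)}(x) - \gamma^{(n)}(y), \qquad \gamma^{(n+1)}(f(x)) - \gamma^{(n+1)}(f(y)) = \gamma^{(n)}(f(x)) - \gamma^{(n)}(f(y)).
\end{equation*}
Continuity of $\gamma^{(n+1)}$ extends these equalities to the right endpoint $x = y_{k-k'(m)}$ in the first case and to the limit $x \to x_{0,d}^{(m-1)}$ in the second.

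\textbf{Step 3 (import the quasi-embedding of $\gamma^{(n)}$).} Combining \eqref{C2'a} with \eqref{hatTnm}, the map $T_{\beta_{1,m-1}}^{(n,m-1)}$ is itself a rotation by the same angle $e^{i\theta}$. Applying the quasi-embedding hypothesis for $\gamma^{(n)}$ at both $x$ and $y$ and subtracting yields $\gamma^{(n)}(f(x)) - \gamma^{(n)}(f(y)) = e^{i\theta}(\gamma^{(n)}(x) - \gamma^{(n)}(y))$, which together with Step 2 is exactly the identity demanded by Step 1.

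\textbf{Main obstacle.} The principal technicality is combinatorial bookkeeping. One must verify that every $x$ in the asserted interval actually lies in $I^{(m-1)} \setminus f^{-1}(I^{(n)})$, so that the quasi-embedding of $\gamma^{(n)}$ is applicable, and that the index $k - k'(m) - 1$ lies in the range in which \eqref{C8'} is valid. The dichotomy $y_k < x_{0,d}^{(m-1)}$ versus $y_k \geq x_{0,d}^{(m-1)}$ precisely detects whether the image gap $J'_{k-1}$ still lies inside $I^{(m-1)}$ or overruns its right endpoint $x_{0,d}^{(m-1)} = |I^{(m-1)}|$; in the latter situation Step 2 must appeal to \eqref{C6'} in place of \eqref{C8'}, and the extension has to stop at $x_{0,d}^{(m-1)}$ rather than at $y_{k-k'(m)}$.
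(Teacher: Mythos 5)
Your proposal follows essentially the same argument as the paper's proof. The paper also works on the gap intervals $J'_j$ where the breaking operator acts by translation, invokes \eqref{C8'} to track the image gap under $f$, uses the quasi-embedding of $\gamma^{(n)}$ together with \eqref{C2'a} to get the rotational identity, and then substitutes back using \eqref{C5} and the quasi-embedding of $\gamma^{(n+1)}$ at $y$ — with continuity handling the endpoint. Your presentation is organized slightly more transparently by reformulating the goal as a difference identity in Step 1 and cancelling the additive breaking constants at the outset, but the mechanics and the inputs are the same; the combinatorial verification you flag as the main obstacle (that $[y, y_{k-k'(m)}] \subseteq I^{(m-1)} \setminus f^{-1}(I^{(n)})$ and that the relevant index is in the range of \eqref{C8'} vs.\ \eqref{C6'}) is treated at the same level of detail in the paper, i.e.\ asserted rather than fully unpacked.
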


\begin{proof}
	As $x \in I_{\beta_{1,m-1}}^{(m-1)}$, by \eqref{C5'}, \eqref{C6'} and \eqref{C8'} we have $f_{\lambda^{(m-1)},\pi^{(m-1)}}(x) \in [ f_{\lambda^{(m-1)},\pi^{(m-1)}}(y), y_{k}   ]$, thus by \eqref{breq1}, \eqref{gamman} and continuity of $\gamma^{(n+1)}$ we get
	\begin{equation*}\label{KC3}
	\gamma^{(n+1)}(f_{\lambda^{(m-1)},\pi^{(m-1)}}(x)  )  = \gamma^{(n)}(f_{\lambda^{(m-1)},\pi^{(m-1)}}(x))+ \underline{\epsilon}_{k-1}.
	\end{equation*}
	Since $\gamma^{(n)}$ is a quasi-embedding of $f_{\lambda^{(m-1)},\pi^{(m-1)}}$ into $T^{(n,m-1)}$ for $ x \in [y, y_{k-k'(m)}]$ we have
	\begin{equation*}\label{KC2}
	\gamma^{(n)}(f_{\lambda^{(m-1)},\pi^{(m-1)}}(x)  )  = {T}^{(n,m-1)}(\gamma^{(n)}(x)).
	\end{equation*}
	Combining these two formulas and using \eqref{C2'a} we obtain
	\begin{equation*}
	\gamma^{(n+1)}(f_{\lambda^{(m-1)},\pi^{(m-1)}}(x)  )  = \left[ e^{i \theta_{\beta_{1,m-1}}^{(m-1)}}(\gamma^{(n)}(x)- \gamma^{(n)}(y)) + \gamma^{(n)}(f_{\lambda^{(m-1)},\pi^{(m-1)}}(y))     \right]+ \underline{\epsilon}_{k-1}.
	\end{equation*}
	Finally, using the definitions of breaking operator and breaking sequence one gets
	\begin{equation}\label{KC9}
	\gamma^{(n+1)}(f_{\lambda^{(m-1)},\pi^{(m-1)}}(x)  )  =  e^{i \theta_{\beta_{1,m-1}}^{(m-1)}}(\gamma^{(n+1)}(x)- \gamma^{(n+1)}(y)) + \gamma^{(n+1)}(f_{\lambda^{(m-1)},\pi^{(m-1)}}(y))   .
	\end{equation}

	Since $\gamma^{(n+1)}$ is a quasi-embedding of $f_{\lambda^{(m-1)},\pi^{(m-1)}}$ into $T^{(n+1,m-1)}$ for $y \in I_{\beta_{1,m-1}}^{(m-1)} \cap J_{k-k'(m)-1}'$ we have
	\begin{equation*}
	\gamma^{(n+1)}(f_{\lambda^{(m-1)},\pi^{(m-1)}}(y)  ) = T^{(n+1,m-1)}(\gamma^{(n+1)}(y)),
	\end{equation*}
	
	which combined with \eqref{C5}  gives that for any $z \in \C$,
	\begin{equation*}\label{KC10}
	T_{\beta_{1,m-1}}^{(n+1,m-1)}(z)=e^{i \theta_{\beta_{1,m-1}}^{(m-1)}}(z-\gamma^{(n+1)}(y)) + \gamma^{(n+1)}(f_{\lambda^{(m-1)},\pi^{(m-1)}}(y)).
	\end{equation*}
	Combined with \eqref{KC9}, we get 
	\begin{equation}\label{K2'a}
		\gamma^{(n+1)}(f_{\lambda^{(m-1)},\pi^{(m-1)}}(x)  ) = T^{(n+1,m-1)}(\gamma^{(n+1)}(x)).
	\end{equation}  
	for all $x \in [y, y_{k-k'(m)}]$ and therefore  $\gamma^{(n+1)}$ is a quasi-embedding of $f_{\lambda^{(m-1)},\pi^{(m-1)}}$ into $T^{(n+1,m-1)}$ in this interval.
	Moreover, it can be proved in a similar way that if $y_k \geq x_{0,d}^{(m-1)}$, then \eqref{K2'a} holds for all $x \in [y,  x_{0,d}^{(m-1)})$.
\end{proof}

\begin{lemma}\label{LKb}
	Given $n \geq 0$ and $0<m\leq n+1$ assume that $\gamma^{(n)}$ is a quasi-embedding of $f_{\lambda^{(m-1)},\pi^{(m-1)}}$ into $T^{(n,m-1)}$ for $x \in I^{(m-1)}\backslash f_{\lambda^{(m-1)},\pi^{(m-1)}}^{-1}\left(  I^{(n)}     \right) $, that for all $\alpha \in \mathcal{A}$, $z \in \C$ we have \eqref{C2'a}, and that with ${\hat x}  = f_{\lambda^{(m-1)},\pi^{(m-1)}}^{-1}(x_{0,d}^{(m)})$  we have \eqref{C5}.
	
	Furthermore for $k(m)\leq k \leq k(m+1)-1$ assume that $\gamma^{(n+1)}$ is a quasi-embedding of $f_{\lambda^{(m-1)},\pi^{(m-1)}}$ into $T^{(n+1,m-1)}$ for $y \in J_{k-k'(m)}$.
	
	If $y_k+\Delta \neq x_{0,d}^{(m-1)}$ then $\gamma^{(n+1)}$ is a quasi-embedding of $f_{\lambda^{(m-1)},\pi^{(m-1)}}$ into $T^{(n+1,m-1)}$ for $x \in [y, y_{k-k'(m)}+\Delta]$. If $y_k+\Delta = x_{0,d}^{(m-1)}$, then $\gamma^{(n+1)}$ is a quasi-embedding for $x \in [y, x_{0,d}^{(m-1)})$.
\end{lemma}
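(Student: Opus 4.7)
Denoting $f := f_{\lambda^{(m-1)},\pi^{(m-1)}}$ for brevity, the plan is to mirror the strategy of Lemma \ref{LKa}, now exploiting that both $x \in J_{k-k'(m)}$ and $f(x) \in J_k$ (by \eqref{C7'}) lie on rotated, rather than translated, pieces of the breaking sequence. First I would apply the rotational branch of the breaking operator \eqref{breq1} to write
\[
\gamma^{(n+1)}(x) = e^{i\varphi}\gamma^{(n)}(x) + \overline{\epsilon}_{k-k'(m)}, \quad \gamma^{(n+1)}(f(x)) = e^{i\varphi}\gamma^{(n)}(f(x)) + \overline{\epsilon}_{k},
\]
where $\varphi = \theta_{\beta_{1,n}}^{(n)}$, together with the analogous identities at $y$. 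Crucially, since $y$ lies in the same interval $J_{k-k'(m)}$, the same shift $\overline{\epsilon}_{k-k'(m)}$ appears at both $x$ and $y$, and similarly $\overline{\epsilon}_{k}$ at $f(x)$ and $f(y)$.

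The key algebraic point is that, upon subtracting, the $\overline{\epsilon}$ corrections cancel, giving $\gamma^{(n+1)}(x) - \gamma^{(n+1)}(y) = e^{i\varphi}(\gamma^{(n)}(x) - \gamma^{(n)}(y))$ and the same identity for the $f$-images. Combining this with the hypothesis that $\gamma^{(n)}$ is a quasi-embedding on $I^{(m-1)} \setminus f^{-1}(I^{(n)})$, together with \eqref{C2'a} and the fact that $\hat{T}_{\beta_{1,m-1}}^{(n,m-1)}$ is an isometric rotation by $e^{i\theta_{\beta_{1,m-1}}^{(m-1)}}$, I would derive
\[
\gamma^{(n+1)}(f(x)) - \gamma^{(n+1)}(f(y)) = e^{i\theta_{\beta_{1,m-1}}^{(m-1)}}\bigl(\gamma^{(n+1)}(x) - \gamma^{(n+1)}(y)\bigr),
\]
the two factors of $e^{i\varphi}$ matching up and cancelling on each side of the equation.

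To conclude I would invoke the quasi-embedding of $\gamma^{(n+1)}$ at $y$ to replace $\gamma^{(n+1)}(f(y))$ by $T^{(n+1,m-1)}(\gamma^{(n+1)}(y))$, and then use \eqref{C5}, which identifies $T_{\beta_{1,m-1}}^{(n+1,m-1)}$ as a rotation by $e^{i\theta_{\beta_{1,m-1}}^{(m-1)}}$ and hence allows recentering around $\gamma^{(n+1)}(y)$. This yields $\gamma^{(n+1)}(f(x)) = T^{(n+1,m-1)}(\gamma^{(n+1)}(x))$, proving the quasi-embedding on $[y, y_{k-k'(m)}+\Delta]$ when $y_k+\Delta \neq x_{0,d}^{(m-1)}$. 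The boundary case $y_k + \Delta = x_{0,d}^{(m-1)}$ is treated analogously, with the extension restricted to $[y, x_{0,d}^{(m-1)})$, relying on the continuity of $\gamma^{(n+1)}$ at the right endpoint of $I^{(m-1)}$.

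The main obstacle will be the careful bookkeeping of which $\overline{\epsilon}$ shift applies to which point, so that the cancellation is transparent. A secondary technicality is verifying that $\gamma^{(n)}$ is indeed a quasi-embedding at the relevant points of $J_{k-k'(m)}$, which reduces to showing $J_k \cap I^{(n)} = \emptyset$ for $k \geq 1$ — a direct consequence of the fact, recalled before the lemma, that $J^{(n+1)}$ is formed by the first-return iterates of $I^{(n)}\setminus I^{(n+1)}$ that only re-enter $I^{(n)}$ at time $r(n)$.
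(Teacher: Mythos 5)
Your proposal matches the paper's proof. Both arguments use the rotational branch of \eqref{breq1} to relate $\gamma^{(n+1)}$ and $\gamma^{(n)}$ at $x,y,f(x),f(y)$, cancel the $\overline{\epsilon}$ shifts using \eqref{C7'} to land on the displaced-difference identity \eqref{KC9}, and then close the argument by invoking the level-$n$ quasi-embedding with \eqref{C2'a}, the quasi-embedding at $y$, and \eqref{C5}, exactly as in Lemma~\ref{LKa}.
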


\begin{proof}
	
	Assume first that $y_k +\Delta \neq x_{0,d}^{(m-1)}$ and take $x \in [y, y_{k-k'(m)}+\Delta]$. As $(I^{(m-1)}\backslash I^{(m)})\cap J_k\neq \emptyset$, by Lemma \ref{LIET2} we must have $y_k +\Delta<x_{0,d}^{(m-1)}$ and thus $x \in I_{\beta_{1,m-1}}^{(m-1)}$. By \eqref{C7'} we have $f_{\lambda^{(m-1)},\pi^{(m-1)}}(x) \in [ f_{\lambda^{(m-1)},\pi^{(m-1)}}(y), y_k +\Delta   ]$, hence by \eqref{breq1}, \eqref{gamman} and continuity of $\gamma^{(n+1)}$ we get
	\begin{equation*}\label{Kb7}
	\gamma^{(n+1)}(f_{\lambda^{(m-1)},\pi^{(m-1)}}(x)  )  = \gamma^{(n)}(f_{\lambda^{(m-1)},\pi^{(m-1)}}(x)) e^{i \theta_{\beta_{1,n}}^{(n)}}+ \overline{\epsilon}_{k}.
	\end{equation*}
	As $[y, y_{k-k'(m)}+\Delta] \subseteq I^{(m-1)}\backslash f_{\lambda^{(m-1)},\pi^{(m-1)}}^{-1}\left(  I^{(n)}     \right) $, we have that $\gamma^{(n)}$ is a quasi-embedding of $f_{\lambda^{(m-1)},\pi^{(m-1)}}$ into $T^{(n,m-1)}$ for $x \in [y, y_{k-k'(m)}+\Delta]$ from whence we have
	\begin{equation*}
	\gamma^{(n)}(f_{\lambda^{(m-1)},\pi^{(m-1)}}(x)  )  = {T}^{(n,m-1)}(\gamma^{(n)}(x)).
	\end{equation*}
	Combining these two formulas and using \eqref{C2'a} we obtain
	\begin{equation*}\label{Kb8}
	\gamma^{(n+1)}(f_{\lambda^{(m-1)},\pi^{(m-1)}}(x)  )  = \left[ e^{i \theta_{\beta_{1,m-1}}^{(m-1)}}(\gamma^{(n)}(x)- \gamma^{(n)}(y)) + \gamma^{(n)}(f_{\lambda^{(m-1)},\pi^{(m-1)}}(y))     \right]e^{i \theta_{\beta_{1,n}}^{(n)}} + \overline{\epsilon}_{k}.
	\end{equation*}	
	As before, using the definitions of breaking operator and breaking sequence one gets \eqref{KC9}. We omit the conclusion of the proof as it is completely analogous to that of Lemma \ref{LKa}.
\end{proof}

\vspace*{0.2in}


We now proceed with the proof of Theorem \ref{fund_lemma}. The argument is structured as follows. The theorem holds trivially in the case $n\geq 0$ and from Lemma \ref{lfund_lemma1} in the case $n \geq 1$ and $m=n-1$. Next we assume, by induction on $m$, that given a fixed $n\geq1$, the theorem is true for $T^{(n,m)}$, with $0\leq m\leq n$ and also for  $T^{(n+1,m)}$, with $0< m\leq n+1$ and we prove it for  $T^{(n+1,m-1)}$.

We prove that $f_{\lambda^{(m-1)},\pi^{(m-1)}}$ is quasi-embedded into $T^{(n+1,m-1)}$ in  $I_{\beta_{1,m-1}}^{(m-1)}\backslash f_{\lambda^{(m-1)},\pi^{(m-1)}}^{-1}(I^{(n+1)})$ by induction in $k$, considering separate subintervals in $J^{(n+1)}$. In particular we achieve this by applying Lemmas \ref{LKa} and \ref{LKb} in an alternate way to extend the quasi-embedding throughout the interval. It follows that our theorem is true for $x \in I_{\beta_{1,m-1}}^{(m-1)}\backslash f_{\lambda^{(m-1)},\pi^{(m-1)}}^{-1}(I^{(n+1)})$.

To prove it is true for $I_{\alpha}^{(m-1)}\backslash f_{\lambda^{(m-1)},\pi^{(m-1)}}^{-1}(I^{(n+1)})$, with $\alpha \neq  \beta_{1,m-1}$, we separate cases $\varepsilon(m-1)=0$ and $\varepsilon(m-1)=1$. In both cases we distinguish between $\alpha \neq  \beta_{0,m-1}$, which requires the use of Lemmas \ref{LXi0} and \ref{LXi1}, and the case $\alpha =  \beta_{0,m-1}$ which follows from a distinct straightforward argument.

\vspace*{0.2in}

\begin{proof}[Proof of Theorem \ref{fund_lemma}]


Both statements in our theorem are trivial to prove for  $n \geq 0$ and $m=n$, as $I_{\alpha}^{(m)}\backslash f_{\lambda^{(m)},\pi^{(m)}}^{-1}\left(  I^{(n)}     \right)= \emptyset$. For $m=n-1$, both statements follow directly from Lemma \ref{lfund_lemma1}.


Given $n \geq 0$, we now assume the following.

\textit{(H1).} For all $0\leq m' \leq n$ and $\alpha \in \mathcal{A}$ that $\gamma^{(n)}$ is a quasi-embedding of $f_{\lambda^{(m')},\pi^{(m')}}$ into $T^{(n,m')}$ for $x \in I^{(m')}\backslash f_{\lambda^{(m')},\pi^{(m')}}^{-1}\left(  I^{(n)}     \right)$,
	and that for all $z \in \C$,
	\begin{equation*}\label{C2'}
	T_{\alpha}^{(n,m')} (z)={ \hat T}_{\alpha}^{(n,m')} (z).
	\end{equation*}
	
	\textit{(H2).} Given $0 < m \leq n+1$, we also assume that for all $\alpha \in \mathcal{A}$ that $\gamma^{(n+1)}$ is a quasi-embedding of $f_{\lambda^{(m)},\pi^{(m)}}$ into $T^{(n+1,m)}$ for $x \in I^{(m)}\backslash f_{\lambda^{(m)},\pi^{(m)}}^{-1}\left(  I^{(n+1)}     \right)$,	
	and that for $z \in \C$,
	\begin{equation}\label{C4'}
	T_{\alpha}^{(n+1,m)} (z)={ \hat T}_{\alpha}^{(n+1,m)} (z).
	\end{equation}	
%
	We need to relate the breaking sequence at the $(m-1)$-step of the Rauzy induction with our map $ T_{\alpha}^{(n+1,m-1)}$.
	
	\vspace{0.2in}
	
	\textbf{Case 1.} Fix $\alpha=\beta_{1,m-1}$. The Rauzy induction is either of type 1 or type 0 and we have $\beta_{\epsilon,m}= (\pi_{\epsilon}^{(m)})^{-1}(d)$.
	We prove now that $\gamma^{(n+1)}$ is a quasi-embedding of $f_{\lambda^{(m-1)},\pi^{(m-1)}}$ into $T^{(n+1,m-1)}$ for all $x \in I_{\alpha}^{(m-1)}\backslash  f_{\lambda^{(m-1)},\pi^{(m-1)}}^{-1}(I^{(n+1)})$ that is
	\begin{equation}\label{K2'}
	\gamma^{(n+1)}(f_{\lambda^{(m-1)},\pi^{(m-1)}}(x)  ) = T^{(n+1,m-1)}(\gamma^{(n+1)}(x)).
	\end{equation} 
	
	\textit{Step 1.} We begin by showing that we have \eqref{C5}, with ${\hat x}  = f_{\lambda^{(m-1)},\pi^{(m-1)}}^{-1}(x_{0,d}^{(m)})$.
	Assume first that $\varepsilon(m-1)=0$. From \eqref{Tnm0} and \eqref{C4'}, we have $$T_{\alpha}^{(n+1,m-1)}(z)=\left( {\hat T}_{  \beta_{0,m-1}   }^{(n+1,m)} \right)^{-1} \circ {\hat T}_{\alpha}^{(n+1,m)}(z),$$ for all $z \in \C$. By definition of $T_{\alpha}^{(n+1,m-1)}$ and Lemma \ref{LA} we get \eqref{C5}.

	Assume now that $\varepsilon(m-1)=1$. In this case, we have $ f_{\lambda^{(m-1)},\pi^{(m-1)}}(x) = f_{\lambda^{(m)},\pi^{(m)}}(x)   $ for $x \in I_{\alpha}^{(m)}$. In particular, if $x \in  I_{\alpha}^{(m)}\backslash   f_{\lambda^{(m)},\pi^{(m)}}^{-1}(I^{(n+1)})  $, then $x \in  I_{\alpha}^{(m)}\backslash   f_{\lambda^{(m-1)},\pi^{(m-1)}}^{-1}(I^{(n+1)})  $ as well.

	 By (H2) and \eqref{A7} we get
	\begin{equation}\label{C*1}
	\xi_{\pi_1^{(m)}(\alpha)}^{n+1,m}- e^{i \theta_{\alpha}^{(m)}}\gamma_{0,\pi_{0}^{(m)}(\alpha)}^{n+1,m}=\gamma^{(n+1)}\left(  f_{\lambda^{(m-1)},\pi^{(m-1)}}(x)  \right) - e^{i \theta_{\alpha}^{(m-1)}} \gamma^{(n+1)}(x),
	\end{equation} 
	for all $x \in  I_{\alpha}^{(m)}\backslash   f_{\lambda^{(m-1)},\pi^{(m-1)}}^{-1}(I^{(n+1)})$. By \eqref{Tnm1} and \eqref{C4'} we have $T_{\alpha}^{(n+1,m-1)}={\hat T}_{\alpha}^{(n+1,m)}$, hence by \eqref{C*1} and \eqref{hatTnm} we get \eqref{K2'} for $x \in   I_{\alpha}^{(m)}\backslash   f_{\lambda^{(m-1)},\pi^{(m-1)}}^{-1}(I^{(n+1)})  $.

	Since $\gamma^{(n+1)}$ is a continuous map and $f_{\lambda^{(m-1)},\pi^{(m-1)}}$ is continuous at ${\hat x}$,  we get \eqref{K2'} for $x={\hat x}$. Since $T_{\alpha}^{(n+1,m-1)}={\hat T}_{\alpha}^{(n+1,m)}$,  \eqref{C5} holds as well.
	
	\vspace{0.2in}
	
	
	 \textit{Step 2.} Recall we denote by $J^{(n+1)}$ the ordering of $\left\{   f_{\lambda,\pi}^k\left(I^{(n)}\backslash I^{(n+1)}\right)   \right\}_{0\leq k<r(n)}$ and that we have the relations \eqref{C5'}-\eqref{C8'}.

	By Lemma \ref{LIET2},  $ J_{k(m)-1} \subseteq I^{(m)}$ and $J_{k(m)}  \subseteq I^{(m-1)}\backslash I^{(m)}$. Thus, either $y_{k(m)-1}+\Delta \leq x_{0,d}^{(m)}<y_{k(m)}$ or $x_{0,d}^{(m)}=y_{k(m)}$.

	Assuming first that $y_{k(m)-1}+\Delta \leq x_{0,d}^{(m)}<y_{k(m)}$, from \eqref{C5} we get that $\gamma^{(n+1)}$ is a quasi-embedding of $f_{\lambda^{(m-1)},\pi^{(m-1)}}$ into $T^{n+1,m-1}$ for $y=f_{\lambda^{(m-1)},\pi^{(m-1)}}^{-1}(x_{0,d}^{(m)})$, that is
	\begin{equation}\label{K1'}
	\gamma^{(n+1)}(f_{\lambda^{(m-1)},\pi^{(m-1)}}(y)  ) = T^{(n+1,m-1)}(\gamma^{(n+1)}(y)).
	\end{equation}
	Since we are assuming (H1) we can apply Lemma \ref{LKa}, and thus we have \eqref{K2'} either for all $x \in  I_{\alpha}^{(m-1)}$ if $y_{k(m)}= x_{0,d}^{(m-1)}$, or for all $x \in [ f_{\lambda^{(m-1)},\pi^{(m-1)}}^{-1}(x_{0,d}^{(m)})  , y_{k(m) - k'(m)} ]$ if $y_{k(m)}< x_{0,d}^{(m-1)}$. In particular we have \eqref{K1'} with $y =y_{k(m) - k'(m)}$.
	
	Now assume that $x_{0,d}^{(m)}=y_{k(m)}$. By \eqref{C5} we also have \eqref{K1'} with $y=y_{k(m)- k'(m)}$. Therefore by Lemma \ref{LKa} we have \eqref{K2'} either for all $x \in  I_{\alpha}^{(m-1)}$ if $y_{k(m)}+\Delta = x_{0,d}^{(m-1)}$, or for all $ x \in [ f_{\lambda^{(m-1)},\pi^{(m-1)}}^{-1}(x_{0,d}^{(m)})  , y_{k(m) - k'(m)}+\Delta ]$ if $y_{k(m)}+\Delta < x_{0,d}^{(m-1)}$.
	
	\vspace{0.2in}
	
	\textit{Step 3.} Now assume, by induction on $k$, for $k(m)+1 \leq k \leq k(m+1)$, and with $y_{k-1} +\Delta< x_{0,d}^{(m-1)}$, that $\gamma^{(n+1)}$ is a quasi-embedding of $f_{\lambda^{(m-1)},\pi^{(m-1)}}$ into $T^{n+1,m-1}$ for all $ x \in [ f_{\lambda^{(m-1)},\pi^{(m-1)}}^{-1}(x_{0,d}^{(m)})  , y_{k - k'(m)-1}+\Delta ]$. In particular we have \eqref{K1'} with $y=  y_{k-k'(m)-1} +\Delta $. 
	Thus by Lemma \ref{LKa} we have \eqref{K2'} either for all $x \in  I_{\alpha}^{(m-1)}$ if $y_{k}\geq x_{0,d}^{(m-1)}$, or for all $x \in [ f_{\lambda^{(m-1)},\pi^{(m-1)}}^{-1}(x_{0,d}^{(m)})  , y_{k - k'(m)} ]$ if $y_{k}< x_{0,d}^{(m-1)}$.
	In particular we get that  $\gamma^{(n+1)}$ is a quasi-embedding of $f_{\lambda^{(m-1)},\pi^{(m-1)}}$ into $T^{n+1,m-1}$ for $y=y_{k-k'(m)}$. Since we are assuming (H1) we can apply Lemma \ref{LKb} and thus we have \eqref{K2'} either for all $x \in  I_{\alpha}^{(m-1)}$ if $y_{k}+\Delta = x_{0,d}^{(m-1)}$, or for all $x \in [ f_{\lambda^{(m-1)},\pi^{(m-1)}}^{-1}(x_{0,d}^{(m)})  , y_{k - k'(m)} +\Delta]$ if $y_{k}\neq x_{0,d}^{(m-1)}$.
	Since $f_{\lambda^{(m-1)},\pi^{(m-1)}}^{-1}([x_{0,d}^{(m)},  x_{0,d}^{(m-1)}  )   )  = I_{\alpha}^{(m-1)} \cap f_{\lambda^{(m-1)},\pi^{(m-1)}}^{-1}(   I_{\beta_{0,m-1}}^{(m-1)}  )  $, this shows that we have \eqref{K2'} for all $x \in I_{\alpha}^{(m-1)} \cap f_{\lambda^{(m-1)},\pi^{(m-1)}}^{-1}(   I_{\beta_{0,m-1}}^{(m-1)}  ) $.
	In particular if $\varepsilon(m-1)=0$, this shows that $\gamma^{(n+1)}$ is a quasi-embedding of $f_{\lambda^{(m-1)},\pi^{(m-1)}}$ into $T^{n+1,m-1}$ for all $x \in I_{\alpha}^{(m-1)}$. If $\varepsilon(m-1)=1$, since $  f_{\lambda^{(m-1)},\pi^{(m-1)}}^{-1}(   I_{\beta_{0,m-1}}^{(m-1)}  )= I_{\alpha}^{(m-1)} \backslash I_{\alpha}^{(m)}  $ and we already proved that \eqref{K2'} holds for all $x \in I_{\alpha}^{(m)} \backslash f_{\lambda^{(m-1)},\pi^{(m-1)}}^{-1}(  I^{(n+1)}  ) $, this shows that it is true for all $x \in I_{\alpha}^{(m-1)} \backslash f_{\lambda^{(m-1)},\pi^{(m-1)}}^{-1}(  I^{(n+1)}  ) $.	
	
	\vspace{0.2in}	
	
	\textit{Step 4.} Combining \eqref{K2'} and \eqref{C5}, for any $x \in I_{\alpha}^{(m-1)} \backslash   f_{\lambda^{(m-1)},\pi^{(m-1)}}^{-1}(  I^{(n+1)} )   $ and $z \in \C $ 
	replacing $x= x_{0,\hpi^{(m-1)(d)}}^{(m-1)} - \delta$ and taking $\delta\rightarrow 0^+$, we get
	\begin{equation}\label{C*3}
	T_{\alpha}^{(n+1,m-1)}(z)= e^{i \theta_{\alpha}^{(m-1)}}\left(  z - \gamma_{0,\hpi^{(m-1)}(d)}^{n+1,m-1}    \right)  + \gamma_{0,d}^{n+1,m-1},
	\end{equation}
	and this can be written as
	\begin{equation*}\label{C*3a}
	T_{\beta_{1,m-1}}^{(n+1,m-1)}(z) = {\hat T}_{\beta_{1,m-1}}^{(n+1,m-1)}(z).
	\end{equation*}
	
	\vspace{0.2in}
	
	In the next cases we establish a relation between $T_{\alpha}^{(n+1,m-1)}$, when  $\alpha\neq \beta_{1,m-1}$ and the breaking sequence at the step $n+1$.
	
	 Note first that since we are assuming that $\gamma^{(n+1)}$ is a quasi-embedding of $f_{\lambda^{(m)},\pi^{(m)}}$ into $T^{(n+1,m)}$ for $x \in I_{\alpha}^{(m)}\backslash f_{\lambda^{(m)},\pi^{(m)}}^{-1}\left(  I^{(n+1)}     \right) $ it follows that for these values of $x$ we have
	\begin{equation}\label{CX1''}
	T^{(n+1,m)}(\gamma^{(n+1)}(x))= \gamma^{(n+1)}(f_{\lambda^{(m)},\pi^{(m)}}(x)).
	\end{equation}

	\textbf{Case 2.} Set $\alpha\neq \beta_{1,m-1}$ and $\varepsilon(m-1)=0$.
	Since $f_{\lambda^{(m-1)}, \pi^{(m-1)} }^{-1}( x_{0,d}^{(m)}  )  = x_{0,\hpi^{(m-1)}(d)-1}^{(m-1)}   $, by \eqref{C5} we get
	\begin{equation*}\label{}
	T_{\beta_{1,m-1}}^{(n+1,m-1)}(z)= e^{i \theta_{\beta_{1,m-1}}^{(m-1)}}\left(  z - \gamma_{0,\hpi^{(m-1)}(d)-1}^{n+1,m-1}    \right)  + \gamma_{0,d-1}^{n+1,m-1},
	\end{equation*}
	which by \eqref{C*3} and \eqref{xis} shows that $\xi_{d-1}^{n+1,m-1} =  \gamma_{1,d-1}^{n+1,m-1}  $. Hence by Lemma \ref{LXi0} we get that $ T_{\alpha}^{(n+1,m-1)} = {\hat T}_{\alpha}^{(n+1,m-1)} $.
	
	By \eqref{Tnm0} and \eqref{C4'} we get that $ T_{\alpha}^{(n+1,m-1)} = T_{\alpha}^{(n+1,m)} $ and by \eqref{CX1''} we get
	$$  T_{\alpha}^{(n+1,m-1)}(\gamma^{(n+1)}(x))= \gamma^{(n+1)}(f_{\lambda^{(m)},\pi^{(m)}}(x)),   $$
    for $x \in I_{\alpha}^{(m)} \backslash  f_{\lambda^{(m)},\pi^{(m)}}^{-1}( I^{(n+1)} ) $. Since $f_{\lambda^{(m-1)},\pi^{(m-1)}}(x)  =  f_{\lambda^{(m)},\pi^{(m)}}(x) $, for $x \in  I_{\alpha}^{(m)}   $, we get
    \begin{equation}\label{C*4}
     T_{\alpha}^{(n+1,m-1)}(\gamma^{(n+1)}(x))= \gamma^{(n+1)}(f_{\lambda^{(m-1)},\pi^{(m-1)}}(x)),
    \end{equation}
	for all $x \in I_{\alpha}^{(m)} \backslash  f_{\lambda^{(m-1)},\pi^{(m-1)}}^{-1}( I^{(n+1)} ) $. In particular, for $\alpha \neq \beta_{0,m-1}  $ we get \eqref{C*4} for $x \in I_{\alpha}^{(m-1)} \backslash  f_{\lambda^{(m-1)},\pi^{(m-1)}}^{-1}( I^{(n+1)} ) $.
	
	Now take $\alpha = \beta_{0,m-1}$ and $x \in (I_{\alpha}^{(m-1)} \backslash I_{\alpha}^{(m)}) \backslash  f_{\lambda^{(m-1)},\pi^{(m-1)}}^{-1}( I^{(n+1)} )$. Since $f_{\lambda^{(m-1)},\pi^{(m-1)}}^{-1}(x) \in I_{\beta_{1,m-1}}^{(m-1)}$, we get by \eqref{K2'},
	$$\gamma^{n+1}(x)= T_{\beta_{1,m-1}}^{(n+1,m-1)}(\gamma^{(n+1)}(f_{\lambda^{(m-1)},\pi^{(m-1)}}^{-1}(x)))$$
	and since $x \in I_{\alpha}^{(m-1)}$, by \eqref{Tnm0} this gives
	$$  T_{\alpha}^{(n+1,m-1)}(\gamma^{(n+1)}(x))= T_{\beta_{1,m-1}}^{(n+1,m)}(\gamma^{(n+1)}(f_{\lambda^{(m-1)},\pi^{(m-1)}}^{-1}(x))). $$
	As $I_{\beta_{1,m-1}}^{(m)} =  I_{\beta_{1,m-1}}^{(m-1)} $ and $f_{\lambda^{(m-1)},\pi^{(m-1)}}^2(x')=f_{\lambda^{(m)},\pi^{(m)}}(x')$, for $x' \in I_{\beta_{1,m-1}}$, we get that $f_{\lambda^{(m-1)},\pi^{(m-1)}}^{-1}(x) \in I_{\beta_{1,m-1}}^{(m-1)} \backslash  f_{\lambda^{(m)},\pi^{(m)}}^{-1}( I^{(n+1)} ) $ and $f_{\lambda^{(m-1)},\pi^{(m-1)}}^{-1}(x)  =  f_{\lambda^{(m)},\pi^{(m)}}^{-1} \circ f_{\lambda^{(m-1)},\pi^{(m-1)}} (x)   $, thus by \eqref{CX1''} we get \eqref{C*4} for $x \in I_{\alpha}^{(m-1)} \backslash  f_{\lambda^{(m-1)},\pi^{(m-1)}}^{-1}( I^{(n+1)} ) $.\\
	
	\textbf{Case 3.} Now assume $\varepsilon(m-1)=1$ and $\alpha \neq \beta_{1,m-1}$. By \eqref{Tnm1} and \eqref{C4'} we have $ T_{\beta_{1,m-1}}^{(n+1,m-1)} = {\hat T}_{\beta_{1,m-1}}^{(n+1,m)} $, and combining this with \eqref{A7} and \eqref{hatTnm} we get
	$$  T_{\beta_{1,m-1}}^{(n+1,m-1)}(z) =  e^{i \theta_{\beta_{1,m-1}}^{(m-1)}} (z  -  \gamma_{0,\hpi^{(m)}(d)}^{n+1,m}) + \xi_{d}^{n+1,m} , $$
	for any $z \in \C$. As $  \gamma_{1,d}^{n+1,m-1} =  \xi_{d}^{n+1,m-1} $, from \eqref{C*3} we get
	$$   \xi_{d}^{n+1,m-1}  - e^{i \theta_{\beta_{1,m-1}}^{(m-1)}} \gamma_{0,\hpi^{(m-1)}(d)}^{n+1,m-1}   = \xi_{d}^{n+1,m}  - e^{i \theta_{\beta_{1,m-1}}^{(m-1)}} \gamma_{0,\hpi^{(m-1)}(d)}^{n+1,m} ,  $$
	which by \eqref{xis} with $j=d-1$, gives 
	$$   \xi_{d-1}^{n+1,m-1} =   e^{i \theta_{\beta_{1,m-1}}^{(m-1)}} \left(    \gamma_{0,\hpi^{(m-1)}(d)-1}^{n+1,m-1}  -\gamma_{0,\hpi^{(m-1)}(d)}^{n+1,m}   \right)  +   \xi_{d}^{n+1,m}.  $$
	Recalling \eqref{xi11} we have $  \gamma_{0,\hpi^{(m-1)}(d)-1}^{n+1,m-1}  = \gamma_{0,\hpi^{(m)}(d)-1}^{n+1,m}    $ and again by \eqref{xis} we get that $ \xi_{d-1}^{n+1,m-1} = \xi_{d-1}^{n+1,m} $. Thus, by Lemma \ref{LXi1}, \eqref{hatTnm},\eqref{Tnn} and \eqref{Tnm1} we obtain $ T_{\alpha}^{(n+1,m-1)} = {\hat T}_{\alpha}^{(n+1,m-1)} $.

	By a reasoning analogous to the case $\varepsilon(m-1) = 0$, we have that \eqref{C*4} is true for all $x \in I_{\alpha}^{(m)} \backslash f_{\lambda^{(m-1)},\pi^{(m-1)}}^{-1} (I^{(n+1)}) $. In particular, for $\alpha \neq \beta_{0,m-1}$ we get \eqref{C*4} for all $x \in I_{\alpha}^{(m-1)} \backslash f_{\lambda^{(m-1)},\pi^{(m-1)}}^{-1} (I^{(n+1)}) $.
	
	Now take $\alpha = \beta_{0,m-1}$ and $x \in I_{\alpha}^{(m-1)}  \backslash  f_{\lambda^{(m-1)},\pi^{(m-1)}}^{-1}( I^{(n+1)} )$. Since $f_{\lambda^{(m-1)},\pi^{(m-1)}}^{-1}(x) \in I_{\beta_{1,m-1}}^{(m-1)}$, we get by \eqref{K2'} and \eqref{Tnm1},
	$$\gamma^{n+1}(x)= T_{\beta_{1,m-1}}^{(n+1,m-1)}(\gamma^{(n+1)}(f_{\lambda^{(m-1)},\pi^{(m-1)}}^{-1}(x)))$$
	and since $x \in I_{\alpha}^{(m-1)}$, by \eqref{Tnm1} this gives
	$$  T_{\alpha}^{(n+1,m-1)}(\gamma^{(n+1)}(x))= T_{\alpha}^{(n+1,m)}(\gamma^{(n+1)}(f_{\lambda^{(m-1)},\pi^{(m-1)}}^{-1}(x))). $$
	As $f_{\lambda^{(m-1)},\pi^{(m-1)}}^{-1}(I_{\alpha}^{(m-1)}) =  I_{\alpha}^{(m-1)} $ and $f_{\lambda^{(m-1)},\pi^{(m-1)}}^2(x')=f_{\lambda^{(m)},\pi^{(m)}}(x')$, for $x' \in I_{\alpha}$, we get that $f_{\lambda^{(m-1)},\pi^{(m-1)}}^{-1}(x) \in I_{\alpha}^{(m-1)} \backslash  f_{\lambda^{(m)},\pi^{(m)}}^{-1}( I^{(n+1)} ) $ and that $f_{\lambda^{(m-1)},\pi^{(m-1)}}^{-1}(x)  = f_{\lambda^{(m)},\pi^{(m)}}^{-1} \circ f_{\lambda^{(m-1)},\pi^{(m-1)}} (x)$, thus by \eqref{CX1''} we get \eqref{C*4} for $x \in I_{\alpha}^{(m-1)} \backslash  f_{\lambda^{(m-1)},\pi^{(m-1)}}^{-1}( I^{(n+1)} ) $.
	
	\vspace*{0.2in}
	
	\textbf{Conclusion.} We proved that for all $z \in \C$,
	\begin{equation*}\label{}
	T_{\alpha}^{(n+1,m-1)} (z)={ \hat T}_{\alpha}^{(n+1,m-1)} (z),
	\end{equation*}
	and from \eqref{C*4} we get for all $\alpha \in \mathcal{A}$ that $\gamma^{(n+1)}$ is a quasi-embedding of $f_{\lambda^{(m-1)},\pi^{(m-1)}}$ into $T^{(n+1,m-1)}$ for $x \in I^{(m-1)}\backslash f_{\lambda^{(m-1)},\pi^{(m-1)}}^{-1}\left(  I^{(n+1)}     \right)$.
	Thus for all $0\leq m \leq n+1$ and $\alpha \in \mathcal{A}$ we have that \eqref{C4'} and \eqref{CX1''} hold and therefore $\gamma^{(n+1)}$ is a quasi-embedding of $f_{\lambda^{(m)},\pi^{(m)}}$ into $T^{(n+1,m)}$ for $x \in I^{(m)}\backslash f_{\lambda^{(m)},\pi^{(m)}}^{-1}\left(  I^{(n+1)}     \right)$.
	
	This shows that for all $n\geq 0$, $0\leq m \leq n$ and $\alpha \in \mathcal{A}$ that $\gamma^{(n)}$ is a quasi-embedding of $f_{\lambda^{(m)},\pi^{(m)}}$ into $T^{(n,m)}$ for $x \in I^{(m)}\backslash f_{\lambda^{(m)},\pi^{(m)}}^{-1}\left(  I^{(n)}     \right)$ and for all $z \in \C$ we have \eqref{X2'}. This finishes our proof.
	
%
	
\end{proof}

\section{Existence of embeddings of IETs into PWIs}

In this section we prove the existence of non-trivial embeddings of IETs into PWIs. We introduce the family $\mathcal{F}_{\theta}$ of PWIs which are $\theta$-adapted to an IET $(\lambda,\pi)$ and show, in Theorem \ref{tOmembed}, that when $\gamma_{\theta}^{(n)}$ converges to a topological embedding $\gamma_{\theta}$, then the latter is an isometric embedding of $(I,f_{\lambda,\pi})$ into any $\theta$-adapted PWI.
We recall some classical notions of the theory of IETs, in particular the Zorich cocycle and the characterization of its Oseledets flags and associated Lyapunov spectrum, as well as the translation surface of genus $g(\mathfrak{R})$ associated to an IET.  We use these results to prove an important bound on a quantitity determined by the Rauzy cocycle in Lemma \ref{lsumrauzy}.
We introduce a submanifold $W_{[\lambda],\pi}^{\delta}$ of the torus $\T^{\mathcal{A}}$ related to the Oseledets flags of the Zorich cocycle for the underlying IET and use Lemma \ref{lsumrauzy} to determine a bound for the sequence $\theta^{(n)}$ when $\theta \in W_{[\lambda],\pi}^{\delta}$. This result together with Theorem \ref{tOmembed}  are the key ingredients in the proof of Theorem \ref{taexistembeds} which states that for a full measure set of IETs, if $\theta \in W_{[\lambda],\pi}^{\delta}$, then $\gamma_{\theta}^{(n)}$ converges to a Lipschitz map $\gamma_{\theta}$, which is an isometric embedding of $(I,f_{\lambda,\pi})$ into any $\theta$-adapted PWI.
The embedding resulting from Theorem \ref{taexistembeds} may, however, be trivial. Thus we define a submanifold $\mathcal{W}_{[\lambda],\pi}^{\delta} \subset W_{[\lambda],\pi}^{\delta}$ which we show, in Theorem \ref{taeexistnontrivembeds}, has full measure when $g(\mathfrak{R})\geq 2$, for which the embedding $\gamma_{\theta}$ is guaranteed to be non-trivial.

Given $(\lambda,\pi)\in \R_+^{\mathcal{A}}\times \mathfrak{S}({\mathcal{A}})$, recall we denote by $\Theta_{\lambda,\pi}$ the set of all $\theta \in \T^{\mathcal{A}}$ such that for all $n \geq 0$, $\gamma_{\theta}^{(n)}:I \rightarrow \C$ is an injective map. Let $\Theta_{\lambda,\pi}'$ denote the set of all $\theta \in \Theta_{\lambda,\pi}$ for which there exists a topological embedding $\gamma_{\theta}:I \rightarrow \C$ such that for all $x \in I$,
\begin{equation*}\label{gamma}
\gamma_{\theta}(x)= \lim_{n \rightarrow +\infty}\gamma_{\theta}^{(n)}(x).
\end{equation*}

Furthermore, given $\theta \in \Theta_{\lambda,\pi}'$, we say that a PWI $T:X \rightarrow X$ together with a partition $\{X_{\alpha}\}_{\alpha \in \mathcal{A}}$ is $\theta$-\emph{adapted to} $(\lambda,\pi)$ if for all $\alpha \in \mathcal{A}$,\vspace*{0.2cm}\\
\vspace*{0.2cm}
i) $X_{\alpha}\supseteq \gamma_{\theta}(I_{\alpha})$ ;\\
ii) with $x_j= x_{0,j}^{(0)}$, and
\begin{equation}\label{PWIinv}
T_{\alpha}(z)=e^{i \theta_{\alpha}}\left( z -  \gamma_{\theta}(x_{\pi_0(\alpha)-1})   \right) +   \gamma_{\theta}\left(f_{\lambda,\pi}(x_{\pi_0(\alpha)-1})\right),
\end{equation}
for all $z \in \C$, we have $T(z)=T_{\alpha}(z)$, for all $z \in X_{\alpha}$.

We denote the family of PWIs which are $\theta$-\emph{adapted to} $(\lambda,\pi)$ by $\mathcal{F}_{\theta}$.\\	

Recall that we say there is a \emph{embedding} of an IET $(I,f_{\lambda,\pi})$ into a PWI $(X,T)$ if there exists a topological embedding $\gamma:I \rightarrow \C$ such that for all $x\in I$,
\begin{equation*}\label{embed}
\gamma\circ f_{\lambda,\pi}(x) = T\circ \gamma (x).
\end{equation*}

Given $x \in I$, consider the family $\Pi(x)$ of points $0=t_0<t_1<...<t_N=x$. Given $\theta \in \Theta_{\lambda,\pi}'$ define a map $\mathcal{L}_{\theta}: I \rightarrow \R_+   $ by
$$ \mathcal{L}_{\theta}(x)= \sup_{(t_0,...,t_N) \in \Pi(x)} \sum_{j=0}^{N-1}\left|  \gamma_{\theta}(t_{j+1}) - \gamma_{\theta}(t_{j})    \right|.  $$
We say a map $\gamma_{\theta}$ is an \textit{isometric embedding} of an IET $(I,f_{\lambda,\pi})$ into a PWI $(X,T)$ if it is an embedding and $\mathcal{L}_{\theta}(x)=x$ for all $x \in I$.

The following theorem states that when $\gamma_{\theta}^{(n)}$ converges to a topological embedding $\gamma_{\theta}$ it is also an isometric embedding of $(I,f_{\lambda,\pi})$ into any PWI which is $\theta$-adapted to $(\lambda,\pi)$. The proof follows from estimates related to the facts that the restriction of any PWI in $\mathcal{F}_{\theta}$ to $\gamma_{\theta}(I)$ can be approximated by the map $T^{(n,0)}$ with increasing precision as $n \rightarrow +\infty$, and that Theorem \ref{fund_lemma} guarantees that $\gamma^{n}$ is a quasi-embedding of $f_{\lambda,\pi}$ into $T^{(n,0)}$ for points in $I \backslash f_{\lambda,\pi}^{-1}(I^{(n)})$ which implies that the conjugacy between these two maps only fails to hold for points in an interval which is decreasing with $n$.
\begin{theorem}\label{tOmembed}
	Let $(\lambda,\pi)\in \R_+^{\mathcal{A}}\times \mathfrak{S}({\mathcal{A}})$, $\theta \in \Theta_{\lambda,\pi}'$ and $(X,T)$ be a PWI $\theta$-adapted to $(\lambda,\pi)$. Then $\gamma_{\theta}$ is an isometric embedding of $(I,f_{\lambda,\pi})$ into $(X,T)$.
\end{theorem}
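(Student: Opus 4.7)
The plan is to establish two claims: (a) the semi-conjugacy $T\circ\gamma_\theta=\gamma_\theta\circ f_{\lambda,\pi}$, which together with the hypothesis $\theta\in\Theta_{\lambda,\pi}'$ makes $\gamma_\theta$ a topological embedding intertwining the two dynamics; and (b) the arc-length normalization $\mathcal{L}_\theta(x)=x$ for all $x\in I$.

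For (a), I would apply Theorem \ref{fund_lemma} at level $m=0$. It supplies both the quasi-embedding equation $T^{(n,0)}(\gamma^{(n)}(x))=\gamma^{(n)}(f_{\lambda,\pi}(x))$ on $I\setminus f_{\lambda,\pi}^{-1}(I^{(n)})$, and the identity $T^{(n,0)}_\alpha=\hat T^{(n,0)}_\alpha$, which by \eqref{hatTnm} is the explicit affine map $z\mapsto e^{i\theta_\alpha}(z-\gamma^{(n)}(x_{\pi_0(\alpha)-1}))+\xi^{n,0}_{\pi_1(\alpha)}$. Since IDOC forces $|I^{(n)}|\to 0$, for any $x$ in the interior of some $I_\alpha$ the quasi-embedding relation applies for all sufficiently large $n$. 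Pointwise convergence $\gamma^{(n)}\to\gamma_\theta$ at partition endpoints pins down the center of rotation, while the translation term $\xi^{n,0}_{\pi_1(\alpha)}$ is identified by applying the quasi-embedding relation to points just to the right of $x_{\pi_0(\alpha)-1}$, giving $\xi^{n,0}_{\pi_1(\alpha)}\to\gamma_\theta(f_{\lambda,\pi}(x_{\pi_0(\alpha)-1}))$. Hence $T^{(n,0)}_\alpha\to T_\alpha$ pointwise on $\C$, with $T_\alpha$ as in \eqref{PWIinv}. Passing to the limit in the quasi-embedding equation yields the semi-conjugacy on interior points, and extending by continuity of $\gamma_\theta$ and right-continuity of $f_{\lambda,\pi}$ covers the partition endpoints.

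For (b), each $\gamma^{(n)}\in\mathcal{PL}(\ell)$ is 1-Lipschitz, and since the convergence $\gamma^{(n)}\to\gamma_\theta$ is uniform by equicontinuity, $\gamma_\theta$ itself is 1-Lipschitz, giving $\mathcal{L}_\theta(x)\leq x$. For the reverse inequality I would exploit part (a): for $x,y\in I_\alpha$ one has $|\gamma_\theta(f_{\lambda,\pi}(x))-\gamma_\theta(f_{\lambda,\pi}(y))|=|\gamma_\theta(x)-\gamma_\theta(y)|$, so the absolutely continuous measure $d\mathcal{L}_\theta=\rho\,dx$ with $0\leq\rho\leq 1$ is $f_{\lambda,\pi}$-invariant. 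The main obstacle is to pin down $\rho\equiv 1$ almost everywhere. In the uniquely ergodic setting available in the applications of this theorem, the $f_{\lambda,\pi}$-invariance of $\rho$ forces it to be a constant $c\in[0,1]$, and the remaining task is to rule out $c<1$. My plan for this hardest step is to combine the Rauzy--Veech tower structure at level $n$ with the explicit construction of the breaking sequence: summing chord lengths of $\gamma_\theta$ over the floors of a tower based at $I_\alpha^{(n)}$, together with careful control over the corrections $\overline{\epsilon}_k$ and $\underline{\epsilon}_k$ supplied by Lemma \ref{brth1}, should match the chord-length sums in the piecewise linear $\gamma^{(N)}$ for $N$ large, whose total arc length is exactly $\ell$; this would then force $\mathcal{L}_\theta(\ell)=\ell$ and hence $c=1$.
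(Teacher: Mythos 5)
Your part (a) essentially reproduces the paper's proof. Both arguments invoke Theorem \ref{fund_lemma} at $m=0$ to get the quasi-embedding on $I\setminus f_{\lambda,\pi}^{-1}(I^{(n)})$ and the identity $T^{(n,0)}_\alpha=\hat T^{(n,0)}_\alpha$, and then pass to the limit using $|I^{(n)}|\to 0$. The paper organizes the limit as a triangle inequality for $\|\gamma_\theta\circ f_{\lambda,\pi}-T\circ\gamma_\theta\|_\infty$ through the intermediate terms $\gamma^{(n)}\circ f_{\lambda,\pi}$ and $T^{(n,0)}\circ\gamma^{(n)}$, controlling the discrepancy on the failure set by a bound of order $|I^{(n)}|$; you argue pointwise and then extend by continuity, but the content is the same.

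Part (b) contains a genuine gap. You reduce the lower bound $\mathcal{L}_\theta(x)\ge x$ to showing that the density $\rho=d\mathcal{L}_\theta/dx$ is identically $1$, deduce constancy of $\rho$ from unique ergodicity, and then defer the final step $c=1$ to an unrealized ``Rauzy-tower chord-matching'' plan. Two separate problems. First, unique ergodicity is not a hypothesis of Theorem \ref{tOmembed}: the standing assumption throughout the paper is IDOC, which gives minimality but not unique ergodicity, so invariance of $d\mathcal{L}_\theta$ alone does not force $\rho$ to be constant. Second, and more fundamentally, even granting unique ergodicity the only inequality you can easily extract (from the $1$-Lipschitz bound, or equivalently from lower semicontinuity of arc length under uniform convergence) is $c\le 1$; nothing in the semi-conjugacy or invariance argument produces $c\ge 1$, and this is precisely the inequality at stake. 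Your proposed chord-sum matching would have to account for the shortfall of chord length relative to parameter length across the corners of $\gamma^{(n)}$ introduced by the breaking operator, which is exactly the difficulty and is left unaddressed. The paper's treatment does not pass through invariant measures at all: it fixes a partition $(t_0,\dots,t_N)$, uses that $\gamma^{(n)}\in\mathcal{PL}(|I|)$ makes chord length equal to parameter length on segments where $\gamma^{(n)}$ is linear, and lets $n\to\infty$. Your invariant-measure detour both adds a hypothesis the theorem does not have and stops short of the conclusion.
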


\begin{proof}
	For any map $g:I \rightarrow \C$ denote $\| g  \|_{\infty} = \sup_{x \in I}|g(x)|$.
	
	As $f_{\lambda,\pi}$ is a bijective map we have
	$$ \| \gamma_{\theta} \circ f_{\lambda,\pi} -  \gamma_{\theta}^{(n)} \circ f_{\lambda,\pi}   \|_{\infty} = \| \gamma_{\theta}  -  \gamma_{\theta}^{(n)}   \|_{\infty}, $$
	which as $\theta \in \Theta_{\lambda,\pi}'$, shows that
	\begin{equation}\label{tA1}
	\lim_{n \rightarrow +\infty}\| \gamma_{\theta} \circ f_{\lambda,\pi} -  \gamma_{\theta}^{(n)} \circ f_{\lambda,\pi}   \|_{\infty}=0.
	\end{equation}
	
	From \eqref{hatTnm} and Theorem \ref{fund_lemma}, for any $\alpha \in \mathcal{A}$ and $x \in I$ we have
	$$  T_{\alpha}^{(n,0)}(\gamma_{\theta}^{(n)}(x)) = e^{i \theta_{\alpha}}\left( \gamma_{\theta}^{(n)}(x) -  \gamma_{0,\pi_0(\alpha)-1}^{n,0}  \right) +   \gamma_{1,\pi_1(\alpha)-1}^{n,0},   $$
	and by \eqref{PWIinv} applying the triangle inequality we get
	\begin{equation*}
	\begin{array}{lr}
	  \|   T_{\alpha}^{(n,0)} \circ \gamma_{\theta}^{(n)}   - T_{\alpha} \circ \gamma_{\theta}    \|_{\infty} \leq 
	\\
	 \|     \gamma_{\theta}^{(n)}   -  \gamma_{\theta}     \|_{\infty}  +    |  \gamma_{\theta}(x_{\pi_0(\alpha)-1})   - \gamma_{0,\pi_0(\alpha)-1}^{n,0}|   +   |\gamma_{1,\pi_1(\alpha)-1}^{n,0}   -  \gamma_{\theta}\left(f_{\lambda,\pi}(x_{\pi_0(\alpha)-1})\right) |,  
	 \end{array} 
	\end{equation*}
		
	which, as $\theta \in \Theta_{\lambda,\pi}'$, shows that
	\begin{equation}\label{tA2}
	\lim_{n \rightarrow +\infty} \|   T^{(n,0)} \circ \gamma_{\theta}^{(n)}   - T \circ \gamma_{\theta}    \|_{\infty}=0.
	\end{equation}
	
	By Theorem \ref{fund_lemma}, $\gamma^{(n)}$ is a quasi-embedding of $f_{\lambda,\pi}$ into $T^{(n,0)}$ for all $x \in I \backslash f_{\lambda,\pi}^{-1}(I^{(n)})$ and thus we have
	$$  \gamma^{(n)}\left(   f_{\lambda,\pi}(x)   \right) = T^{(n,0)}\left(  \gamma^{(n)}(x) \right),$$
	in particular this gives
	$$ \| \gamma^{(n)} \circ  f_{\lambda,\pi}   -  T^{(n,0)} \circ \gamma^{(n)} \|_{\infty} \leq \sup_{x \in f_{\lambda,\pi}^{-1}(I^{(n)}) } \left|    \gamma^{(n)}\left(   f_{\lambda,\pi}(x)   \right) - T^{(n,0)}\left(  \gamma^{(n)}(x)  \right)     \right|.$$
	For a sufficiently large $N>0$ we have $f_{\lambda,\pi}^{-1}(I^{(n)})   \subseteq I_{\pi_1^{-1}(1)}   $, whenever $n>N$.
	
	~
	
	As $T^{(n,0)}(\gamma_{\theta}^{(n)}(x_{\hpi^{(n)}(1)}^{(n)}))= x_1^{(n)} \in \overline{I^{(n)}} $ and since $T_{\pi_1^{-1}(1)}^{(n,0)}$ is an isometry, we get that
	$$   \sup_{x \in f_{\lambda,\pi}^{-1}(I^{(n)}) } \left|   T^{(n,0)}\left(  \gamma^{(n)}(x)  \right)    \right|  \leq 2 |I^{(n)}|.$$
	Since $\sup_{x \in f_{\lambda,\pi}^{-1}(I^{(n)}) } \left|  \gamma^{(n)}\left(   f_{\lambda,\pi}(x)   \right)    \right|  \leq  |I^{(n)}|$ and $|I^{(n)}| \rightarrow 0$ as $n \rightarrow + \infty$, this shows that
	\begin{equation}\label{tA3}
	\lim_{n \rightarrow +\infty} \| \gamma^{(n)} \circ  f_{\lambda,\pi}   -  T^{(n,0)} \circ \gamma^{(n)}    \|_{\infty}=0.
	\end{equation}
	
	By the triangle inequality we have
	$$ \| \gamma_{\theta} \circ  f_{\lambda,\pi}   -  T \circ \gamma_{\theta}    \|_{\infty} \leq \| \gamma_{\theta} \circ f_{\lambda,\pi} -  \gamma_{\theta}^{(n)} \circ f_{\lambda,\pi}   \|_{\infty}   +  \| \gamma^{(n)} \circ  f_{\lambda,\pi}   -  T^{(n,0)} \circ \gamma^{(n)}    \|_{\infty}  + \|   T^{(n,0)} \circ \gamma_{\theta}^{(n)}   - T \circ \gamma_{\theta}    \|_{\infty}   . $$

	Taking the limit as $n \rightarrow  + \infty$ and by \eqref{tA1}, \eqref{tA2} and \eqref{tA3} we get
	\begin{equation*}\label{}
	\gamma_{\theta}\circ f_{\lambda,\pi}(x) = T \circ \gamma_{\theta} (x),
	\end{equation*}
	for all $x \in I$, which proves that $\gamma_{\theta}$ is an embedding of $(I,f_{\lambda,\pi})$ into $(X,T)$.
	
	Finally, given $x \in I$, consider $0=t_0<t_1<...<t_N=x$. For all $n\geq 0$, $\gamma_{\theta}^{(n)} \in \mathcal{PL}(|I|)$ from which follows that $\left| \gamma_{\theta}^{(n)}(t_{j+1}) - \gamma_{\theta}^{(n)}(t_{j})   \right|= \left|  t_{j+1}-t_{j}  \right|$,  for any $j=0,...,N-1$. Hence, as $\theta \in \Theta_{\lambda,\pi}'$, we get
	$$ x = \sum_{j=0}^{N-1}\left|  \gamma_{\theta}^{(n)}(t_{j+1}) - \gamma_{\theta}^{(n)}(t_{j})    \right| \rightarrow  \sum_{j=0}^{N-1}\left|  \gamma_{\theta}(t_{j+1}) - \gamma_{\theta}(t_{j})    \right|, \ \textrm{as} \ n \rightarrow + \infty, $$
	which shows that $\mathcal{L}_{\theta}(x)=x$ finishing our proof.
\end{proof}


Following \cite{AF,AL}, let $\mathbb{P}_+^{\mathcal{A}} = \mathbb{P}(\R_+^{\mathcal{A}}) \simeq \mathbb{P}_+^{\mathcal{A}}$ denote the projectivization of $\R_{+}^{\mathcal{A}}$. Let $\mathfrak{R} \subseteq \mathfrak{S}({\mathcal{A}})$ be a Rauzy class. Since $\mathcal{R}$ commutes with dilations on $\R_{+}^{\mathcal{A}}$ it projectivizes to a map  $\mathcal{R}_R : \mathbb{P}_{+}^{\mathcal{A}} \times \mathfrak{R} \rightarrow \mathbb{P}_{+}^{\mathcal{A}} \times \mathfrak{R}$ called the \textit{Rauzy renormalization map}  which is defined in the complement of countably many hyperplanes.
Moreover we have that if $[\lambda] = [\lambda']$, then 
$B_R(\lambda',\pi)=B_R(\lambda,\pi)$ for any $\pi \in \mathfrak{R}$, hence the application $([\lambda],\pi) \rightarrow B_R([\lambda],\pi)$ is well defined. We refer to this cocycle as the Rauzy cocycle as well.

An induction scheme $\mathcal{S}: \R_{+}^{\mathcal{A}} \times \mathfrak{R} \rightarrow \R_{+}^{\mathcal{A}}  \times \mathfrak{R}$ is an \textit{acceleration of Rauzy induction} if there exists an integral application $m: \R_{+}^{\mathcal{A}} \times \mathfrak{R} \rightarrow \Z_+$, such that for every $(\lambda,\pi) \in \R_{+}^{\mathcal{A}} \times \mathfrak{R}$ we have $m(a\lambda,\pi)= m(\lambda,\pi)$ for all $a>0$ and
$$\mathcal{S}(\lambda,\pi) = \mathcal{R}^{m(\lambda,\pi)}(\lambda,\pi).$$

It is direct to see that $\mathcal{S}$ also commutes with dilations on $\R_{+}^{\mathcal{A}}$ and hence it projectivizes to a map  $\mathcal{S}_R : \mathbb{P}_{+}^{\mathcal{A}} \times \mathfrak{R} \rightarrow \mathbb{P}_{+}^{\mathcal{A}} \times \mathfrak{R}$ which we call an \textit{acceleration of Rauzy renormalization}.
Moreover we have that if $A:\mathbb{P}_{+}^{\mathcal{A}} \times \mathfrak{R} \rightarrow SL({\mathcal{A}},\Z)$ defines a cocycle over $\mathcal{S}$, then its projectivization $([\lambda],\pi) \rightarrow A([\lambda],\pi)$ is well defined.

A \textit{flag}, on  an $N$-dimensional vector space $F$, is a decreasing family of vector subspaces $\{F^j\}_{j=1,...,k+1}$, with $k\leq N$,
$$ F = F^1 \supsetneq F^2 \supsetneq ... \supsetneq F^k \supsetneq \{0\}= F^{k+1} .  $$
The flag is said to be \textit{complete} if $k=N$ and $\dim F^j = N+1-j$, for all $j=1,...,N$.

The following well known result follows from Oseledets Theorem \cite{O}.
\begin{theorem}\label{toseledets}
	Let $\mathfrak{R} \subseteq \mathfrak{S}({\mathcal{A}})$ be a Rauzy class, $\mathcal{S}_R: \mathbb{P}_{+}^{\mathcal{A}} \times \mathfrak{R} \rightarrow \mathbb{P}_{+}^{\mathcal{A}} \times \mathfrak{R}$ be an acceleration of Rauzy renormalization which is measurable with respect to an ergoric measure $m_{\mathfrak{R}}$ and let $A:\mathbb{P}_{+}^{\mathcal{A}} \times \mathfrak{R} \rightarrow SL({\mathcal{A}},\Z)$ be a $m_{\mathfrak{R}}$-measurable cocycle over $\mathcal{S}_R$.
	
	There exist $\kappa(\mathfrak{R}) \in \N$, real numbers $\nu_1(\mathfrak{R})>...>\nu_{\kappa(\mathfrak{R})}(\mathfrak{R})$ and for $m_{\mathfrak{R}}$-almost every $([\lambda],\pi) \in \mathbb{P}_{+}^{\mathcal{A}} \times \mathfrak{R}$ there exists a flag $ \R^{\mathcal{A}} = V_{[\lambda],\pi}^1  \supsetneq ... \supsetneq V_{[\lambda],\pi}^{\kappa(\mathfrak{R})} \supsetneq \{0\} = V_{[\lambda],\pi}^{\kappa(\mathfrak{R})+1}  $ such that $A([\lambda],\pi) \cdot V_{[\lambda],\pi}^{j}=V_{\mathcal{S}_R([\lambda],\pi)}^{j}$ and
	$$ \lim_{n\rightarrow +\infty}\frac{1}{n}\log \|  A^{(n)}([\lambda],\pi)\cdot v \| = \nu_j(\mathfrak{R}),  $$
	for all $v \in V_{[\lambda],\pi}^{j} \backslash V_{[\lambda],\pi}^{j+1}$, $j=1,...,\kappa(\mathfrak{R})$.
\end{theorem}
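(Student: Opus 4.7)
The plan is to apply Oseledets' multiplicative ergodic theorem in its flag form directly to the linear cocycle $(\mathcal{S}_R, A)$ on $\mathbb{P}_+^{\mathcal{A}} \times \mathfrak{R} \times \R^{\mathcal{A}}$. By hypothesis, $\mathcal{S}_R$ preserves the ergodic probability measure $m_{\mathfrak{R}}$, and $A$ being an $m_{\mathfrak{R}}$-measurable cocycle means, by the definition fixed earlier in the paper, that the log-integrability condition
$$  \int_{\mathbb{P}_+^{\mathcal{A}} \times \mathfrak{R}} \log^+ \| A([\lambda], \pi)\| \, dm_{\mathfrak{R}}([\lambda],\pi) < +\infty  $$
holds. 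This is precisely the hypothesis Oseledets requires as input, so no auxiliary verification is needed.

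Applying Oseledets' theorem then produces finitely many distinct Lyapunov exponents which, by ergodicity of $m_{\mathfrak{R}}$, are $m_{\mathfrak{R}}$-almost everywhere constant and hence depend only on $\mathfrak{R}$; denote them $\nu_1(\mathfrak{R}) > \cdots > \nu_{\kappa(\mathfrak{R})}(\mathfrak{R})$. For $m_{\mathfrak{R}}$-almost every $([\lambda], \pi)$ the theorem also produces a measurable decreasing flag
$$  \R^{\mathcal{A}} = V_{[\lambda],\pi}^1 \supsetneq V_{[\lambda],\pi}^2 \supsetneq \cdots \supsetneq V_{[\lambda],\pi}^{\kappa(\mathfrak{R})} \supsetneq V_{[\lambda],\pi}^{\kappa(\mathfrak{R})+1} = \{0\},  $$
equivariant under the cocycle in the sense $A([\lambda],\pi)\cdot V_{[\lambda],\pi}^j = V_{\mathcal{S}_R([\lambda],\pi)}^j$, and characterized intrinsically by exponential growth: membership of a nonzero $v$ in $V_{[\lambda],\pi}^j \setminus V_{[\lambda],\pi}^{j+1}$ is exactly the condition
$$  \lim_{n\to+\infty}\tfrac{1}{n}\log \|A^{(n)}([\lambda],\pi)\cdot v\| = \nu_j(\mathfrak{R}).  $$
Transcribing these conclusions yields the theorem verbatim.

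Since Oseledets' theorem is a classical result whose hypotheses are already encoded in the definition of measurable cocycle used here, there is no serious obstacle to overcome. The only mild point worth flagging is that one invokes the \emph{flag} version rather than the stronger Oseledets splitting: the renormalization $\mathcal{S}_R$ is generally a one-sided map on $\mathbb{P}_+^{\mathcal{A}} \times \mathfrak{R}$ and no integrability hypothesis is imposed on $A^{-1}$, so the decreasing flag statement is the appropriate one for this setting (and is exactly what is needed in the subsequent applications of Theorem \ref{toseledets} to the Zorich cocycle).
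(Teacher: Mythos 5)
Your proposal matches the paper's approach exactly: the paper simply states that this theorem ``follows from Oseledets Theorem \cite{O}'' without giving details, and you correctly flesh out that citation by checking the hypotheses (ergodicity of $m_{\mathfrak{R}}$, log-integrability encoded in the paper's definition of measurable cocycle) and invoking the one-sided flag version, which is the right formulation here since $\mathcal{S}_R$ is not assumed invertible and no integrability is imposed on $A^{-1}$.
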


The spaces $V_{[\lambda],\pi}^j$ are called \textit{Oseledets subspaces} and the numbers $\nu_j(\mathfrak{R})$ are called the \textit{Lyapunov exponents} of the cocycle.
The integer $\dim V_{[\lambda],\pi}^j -\dim V_{[\lambda],\pi}^{j+1}$ is called the \textit{multiplicity} of the Lyapunov exponent $\nu_j(\mathfrak{R})$ and it is constant in a full measure set. The \textit{Lyapunov spectrum} of the cocycle is the set of its Lyapunov exponents counted with multiplicity.

In \cite{V1}, Veech proved that Rauzy renormalization admits an absolutely continuous ergodic measure. This measure, however is not finite and thus the Rauzy cocycle is not measurable with respect to it.

In \cite{Z1} Zorich defined an acceleration of Rauzy induction as follows. Given $(\lambda,\pi) \in \R_{+}^{\mathcal{A}} \times \mathfrak{S}({\mathcal{A}})$, let $n(\lambda,\pi)$ denote the smallest $n \in \N$ such that $\varepsilon(n) \neq \varepsilon(0)$ and set
$$  \mathcal{Z}(\lambda,\pi)= \mathcal{R}^{n(\lambda,\pi)}(\lambda,\pi).  $$
The map $\mathcal{Z}$ is called \textit{Zorich induction} and it projectivizes to a map $\mathcal{Z}_R : \mathbb{P}_{+}^{\mathcal{A}} \times \mathfrak{R} \rightarrow \mathbb{P}_{+}^{\mathcal{A}} \times \mathfrak{R}$ called \textit{Zorich renormalization}.

\begin{theorem}[\cite{Z1}]
	Let $\mathfrak{R}\subset \mathfrak{S}({\mathcal{A}})$ be a Rauzy class. Then $\mathcal{Z}_R :\mathbb{P}_{+}^{\mathcal{A}} \times \mathfrak{R}    \rightarrow \mathbb{P}_{+}^{\mathcal{A}} \times \mathfrak{R}$ admits a unique ergodic absolutely continuous probability measure $\mu_{\mathfrak{R}}$. Its density is positive and analytic.
\end{theorem}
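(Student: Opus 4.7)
The plan is to derive $\mu_{\mathfrak{R}}$ from Veech's $\sigma$-finite invariant measure for $\mathcal{R}_R$ via the theory of induced (first return) maps. Recall that Veech constructed an absolutely continuous ergodic $\mathcal{R}_R$-invariant measure $\nu$ on $\mathbb{P}_+^{\mathcal{A}} \times \mathfrak{R}$ with an explicit positive analytic density (essentially $|\lambda|^{-d}$ on each simplex, up to the normalisation induced by projectivisation). This $\nu$ has infinite total mass because its density blows up along the boundary faces $\{\lambda_\alpha = 0\}$, so it cannot be renormalised to a probability measure directly; Zorich's acceleration is designed precisely to compensate for this pathology.

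First I would identify $\mathcal{Z}_R$ with the first return map of $\mathcal{R}_R$ to a suitable subset $U \subset \mathbb{P}_+^{\mathcal{A}} \times \mathfrak{R}$, namely the locus on which the type of the point differs from that of its $\mathcal{R}_R$-image. With this description the return time coincides, up to a shift, with the function $n(\lambda,\pi)$ used to define $\mathcal{Z}$, and the standard theory of induced systems yields a candidate invariant measure $\mu_{\mathfrak{R}} := \nu|_U / \nu(U)$ for $\mathcal{Z}_R$, provided $\nu(U) < \infty$.

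Proving $\nu(U) < \infty$ is the main technical obstacle. On $U$ a type change forces non-trivial lower bounds on both $\lambda_{\beta_0}$ and $\lambda_{\beta_1}$ relative to $|\lambda|$, and these bounds tame the $|\lambda|^{-d}$ singularity enough to make the integral converge. Concretely, I would partition $U$ according to the triple $(\pi, \varepsilon(0), \varepsilon(1))$, translate the defining inequalities into linear constraints in the simplex coordinates, and bound the resulting integrals of $|\lambda|^{-d}$ by direct calculation. An equivalent formulation of this step, and probably the cleanest one, is to verify that the $\sigma$-algebra of cylinders defined by a single Zorich step sums to a finite $\nu$-measure against $\nu$.

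Once $\mu_{\mathfrak{R}}$ is defined, absolute continuity is immediate, and positivity plus analyticity of its density follow because $\nu$ has these properties and $U$ is open in $\mathbb{P}_+^{\mathcal{A}} \times \mathfrak{R}$. Ergodicity of $\mathcal{Z}_R$ transfers from that of $\mathcal{R}_R$: any $\mathcal{Z}_R$-invariant set $A \subset U$ of positive $\mu_{\mathfrak{R}}$-measure saturates under $\mathcal{R}_R$ to $\widetilde{A} = \bigcup_{k \geq 0} \mathcal{R}_R^{-k}(A)$, which is $\mathcal{R}_R$-invariant and hence has full $\nu$-measure, so $A$ has full $\mu_{\mathfrak{R}}$-measure. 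Finally, uniqueness among ergodic absolutely continuous probabilities is the standard argument that two such measures would be mutually absolutely continuous ergodic invariant probabilities and so must coincide.
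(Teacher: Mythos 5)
The paper does not prove this theorem; it is stated as a black box citing Zorich's 1996 paper \cite{Z1}, so there is no internal proof to compare against. Your high-level strategy --- derive $\mu_{\mathfrak{R}}$ from Veech's $\sigma$-finite invariant measure $\nu$ via an inducing/acceleration argument, with the essential technical step being a finite-mass estimate --- is the right idea and is consistent with how the modern literature (Zorich, Marmi--Moussa--Yoccoz, Viana) treats this.

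However, there is a genuine gap in the central identification. You claim that $\mathcal{Z}_R$ is the first return map of $\mathcal{R}_R$ to the set $U = \{(\lambda,\pi): \varepsilon(\lambda,\pi)\neq \varepsilon(\mathcal{R}_R(\lambda,\pi))\}$, ``up to a shift.'' This cannot be made to work inside the base phase space $\mathbb{P}_+^{\mathcal{A}}\times\mathfrak{R}$. The reason is that the condition ``the type just changed'' is a condition on one step of \emph{history}, not on the current point: $\mathcal{R}_R$ is generically two-to-one (each point has a type-$0$ and a type-$1$ preimage), so which preimage one came from is extra information not encoded in $(\lambda,\pi)$. Concretely, $\mathcal{Z}|_U = \mathcal{R}_R|_U$ (since $n\equiv 1$ on $U$), and $\mathcal{Z}$ does not map $U$ into $U$; what one actually gets is the identity $\mathcal{Z}\circ(\mathcal{R}_R|_U) = (\mathcal{R}_R|_U)\circ R_U$ on $U$, where $R_U$ is the first return map. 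So $\mathcal{Z}$ is \emph{conjugate} to $R_U$ via the bijection $\mathcal{R}_R|_U: U\to\mathcal{R}_R(U)$, but $\mathcal{R}_R(U)$ is essentially the whole space, and the conjugating map destroys the naive ``restrict-and-normalise'' recipe. In particular, your candidate measure $\nu|_U/\nu(U)$ is $R_U$-invariant but \emph{not} $\mathcal{Z}_R$-invariant, and its density vanishes on $U^c$ (which has positive measure), directly contradicting the positivity of the Zorich density asserted in the theorem. To make the inducing picture rigorous one must pass to the natural extension of $\mathcal{R}_R$ (equivalently, adjoin a memory coordinate recording the previous type), realise Zorich as a first return map there, and then project --- or else verify invariance of the explicit Zorich density by direct computation, which is essentially what Zorich does in \cite{Z1}. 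Your uniqueness argument also needs repair: two absolutely continuous ergodic invariant probabilities for the same map need not be mutually absolutely continuous in general; uniqueness here relies on the connectedness of the Rauzy class and the Markovian structure of the renormalisation, not just on abstract ergodic theory.
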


Define the matrix function $B_{Z} : \R^{\mathcal{A}} \times \mathfrak{R} \rightarrow SL({\mathcal{A}},\Z)$ by
$$ B_{Z}(\lambda,\pi) = B(\lambda^{\left(  n(\lambda,\pi) -1 \right)},\pi^{\left(  n(\lambda,\pi) -1 \right)}) \cdot ... \cdot B(\lambda^{(1)},\pi^{(1)}) \cdot B(\lambda,\pi).$$

The \textit{Zorich cocycle} is the linear cocycle over the Zorich induction $(\mathcal{Z},B_{Z})$  on $\R_+^{\mathcal{A}} \times \mathfrak{R}\times \R^{\mathcal{A}}$. Its projectivization $(\mathcal{Z}_R,B_{Z})$ is well defined and also called Zorich cocycle.

Let $\| \cdot \|$ denote a matrix norm on $SL({\mathcal{A}},\Z)$ and let $\| A\|_{0}= \max\{ \| A \|,  \| A \|^{-1}   \}$ for any $A \in SL({\mathcal{A}},\Z)$. Recall we denote $\log^+y= \max\{\log(y) ,0  \}$ for any $y>0$.
\begin{theorem}[\cite{Z1}]\label{tzor}
	Let $\mathfrak{R}\subset \mathfrak{S}({\mathcal{A}})$ be a Rauzy class. Then
	$$  \int_{\mathbb{P}_{+}^{\mathcal{A}}\times \mathfrak{R} } \log^+ \| B_Z  \|_0 d\mu_{\mathfrak{R}} < + \infty. $$
	In particular $B_Z$ is a measurable cocycle with respect to $\mu_{\mathfrak{R}} $.
\end{theorem}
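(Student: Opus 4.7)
The plan is to control $\|B_Z(\lambda,\pi)\|_0$ by a geometric ratio on the parameter simplex and then to verify log-integrability of that ratio using the positive, analytic, bounded density of $\mu_{\mathfrak{R}}$ established in \cite{Z1}.

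First I would use the identity $\lambda = (B_Z(\lambda,\pi))^{*}\cdot \lambda^{(n(\lambda,\pi))}$, which follows immediately from \eqref{BR} applied at time $n(\lambda,\pi)$ and the definition of Zorich induction. Since $B_Z(\lambda,\pi)$ is a product of elementary matrices of the form $\mathbf{1}_d + E_{a,b}$, all of its entries are non-negative integers, and the identity forces
$$\bigl(B_Z(\lambda,\pi)\bigr)_{\alpha,\beta}\cdot \lambda^{(n(\lambda,\pi))}_{\alpha} \;\leq\; \lambda_\beta,$$
whence
$$\|B_Z(\lambda,\pi)\| \;\leq\; C(d)\cdot \frac{|\lambda|}{\min_\alpha \lambda^{(n(\lambda,\pi))}_{\alpha}}.$$
As $B_Z(\lambda,\pi) \in SL(\mathcal{A},\Z)$, Cramer's rule yields $\|B_Z(\lambda,\pi)^{-1}\| \leq C'(d)\,\|B_Z(\lambda,\pi)\|^{d-1}$, so
$$\log^+\|B_Z(\lambda,\pi)\|_0 \;\leq\; d\cdot \log^+\!\frac{|\lambda|}{\min_\alpha \lambda^{(n(\lambda,\pi))}_{\alpha}} \;+\; O(1).$$

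Next, working on the projective simplex with the normalization $|\lambda|=1$ and factoring $\lambda^{(n)} = |\lambda^{(n)}|\cdot\tilde{\lambda}^{(n)}$ with $\tilde{\lambda}^{(n)}$ a projective representative, I would split the upper bound into two pieces. Piece (i), $-\log\min_\alpha \tilde{\lambda}^{(n(\lambda,\pi))}_\alpha$, by $\mathcal{Z}_R$-invariance of $\mu_{\mathfrak{R}}$, transforms into $-\log\min_\alpha \lambda_\alpha$ on $\mathbb{P}_+^{\mathcal{A}}\times\mathfrak{R}$, which is integrable because $\log$-singularities on codimension-one faces of the open simplex are integrable against any bounded density. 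Piece (ii), $-\log|\lambda^{(n(\lambda,\pi))}|$, is the logarithm of the projective expansion accomplished by a single Zorich step, and it is the nontrivial term.

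The main obstacle will be the integrability of (ii). I would handle it by stratifying $\mathbb{P}_+^{\mathcal{A}}\times\mathfrak{R}$ into the level sets $C_N = \{n(\lambda,\pi)=N\}$ of the Zorich return time. On each $C_N$ the quantity $|\log|\lambda^{(N)}||$ grows at most linearly in $N$, so it suffices to show that $\mu_{\mathfrak{R}}(C_N)$ decays polynomially fast enough for $\sum_N N\cdot\mu_{\mathfrak{R}}(C_N)$ to converge. This is the combinatorial content of Zorich's argument in \cite{Z1}: each $C_N$ is a finite union of cylinders cut out by affine inequalities, and the analyticity, positivity, and boundedness of the invariant density force the measure of these cylinders to shrink at a summable rate, the required tail bound being essentially inherited from the Gauss-type nature of $\mathcal{Z}_R$. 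Summing over $N$ then yields integrability of (ii); combining with the bound on (i) gives the theorem. The measurability of $B_Z$ as a cocycle follows at once, since $B_Z$ is a piecewise-constant, integer-valued function of $(\lambda,\pi)$ and the integrability is the non-trivial component of the "measurable cocycle" definition.
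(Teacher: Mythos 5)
This statement is a citation from Zorich's paper \cite{Z1}; the present paper does not prove it, so there is no internal proof to compare against. Your attempt must therefore be judged on its own terms as a sketch of Zorich's integrability theorem.

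Your opening steps are sound: from $\lambda = (B_Z(\lambda,\pi))^{*}\lambda^{(n(\lambda,\pi))}$ and nonnegativity of the entries, you correctly obtain $(B_Z)_{\alpha\beta}\leq \lambda_\beta/\lambda^{(n)}_\alpha$, hence $\|B_Z\|\lesssim |\lambda|/\min_\alpha\lambda^{(n)}_\alpha$; Cramer's rule correctly reduces $\|B_Z^{-1}\|$ to a power of $\|B_Z\|$; and splitting the bound into a projective distortion term and a total-length contraction term is a reasonable decomposition. The observation that the loser is always less than half the total length, giving $-\log|\lambda^{(N)}|\leq N\log 2$ on the level set $C_N$, is also correct.

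The genuine gap is in your handling of piece (ii). The entire difficulty of Zorich's theorem is concentrated in the estimate $\sum_N N\cdot\mu_{\mathfrak{R}}(C_N)<\infty$, i.e., a quantitative tail bound on the Zorich return time, and your argument defers exactly this to ``the combinatorial content of Zorich's argument in \cite{Z1}.'' That is circular: you are using the hard part of the theorem to prove the theorem. A self-contained proof would have to establish the tail estimate directly --- for instance, by showing that, conditioned on the combinatorial data, the return time $n(\lambda,\pi)$ has a geometric-type distribution, which forces geometric decay of $\mu_{\mathfrak{R}}(C_N)$. This is the content you are missing. There is also a secondary issue in piece (i): you invoke ``boundedness'' of the density to conclude integrability of $-\log\min_\alpha\lambda_\alpha$, but the paper's statement of Zorich's existence theorem only asserts that the density is positive and analytic on the open simplex; boundedness up to the boundary is an extra fact that needs to be argued (it is true, but is not immediate from ``analytic'').
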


Recall the linear map $\Omega_{\pi}$ in \eqref{eqiet3}. Let $H_{\pi}$ be the image subspace of $\Omega_{\pi}$, that is, $H_{\pi}=\Omega_{\pi}(\R^{\mathcal{A}})$. From \cite{AF,V4} it follows that
\begin{equation}\label{BH}
 B(\lambda,\pi)\cdot H_{\pi} = H_{\pi^{(1)}},
\end{equation}
from which follows that $\dim H_{\pi}$ only depends on the Rauzy class $\mathfrak{R} \subset \mathfrak{S}({\mathcal{A}})$ of $\pi$. 

A \textit{translation surface} (as defined in \cite{AF}), is a surface with a finite number of conical singularities endowed with an atlas such that coordinate changes are given by translations in $\R^2$.
Given $(\lambda,\pi) \in \R_+^{\mathcal{A}} \times \mathfrak{R}$ it is possible (see for instance \cite{V1}) to associate, via a suspension construction, a translation surface, with genus $g(\mathfrak{R})\geq1$ and $\kappa$ singularities depending only on $\mathfrak{R}$. Moreover $\dim H_{\pi} = 2 g(\mathfrak{R})$.

By \eqref{BH}, it is direct to see that $H_{\pi}$ is an invariant subspace for both Rauzy and Zorich cocycles. Hence we can consider restrictions $B_R([\lambda],\pi)|_{H_\pi}$ and $B_Z([\lambda],\pi)|_{H_\pi}$ as integral cocycles over $\mathcal{R}_R$ and $\mathcal{Z}_R$ respectively, which we call \textit{restricted} Rauzy and Zorich cocycles. To simplify the notation we, at times, write $B_R([\lambda],\pi)$ and $B_Z([\lambda],\pi)$ instead of $B_R([\lambda],\pi)|_{H_\pi}$ and $B_Z([\lambda],\pi)|_{H_\pi}$.\\

As a consequence of theorems \ref{toseledets} and \ref{tzor}, for any Rauzy class $\mathfrak{R}\subset \mathfrak{S}({\mathcal{A}})$ there exist $k(\mathfrak{R}) \in \N$ such that for $\mu_{\mathfrak{R}}$-almost every $([\lambda],\pi) \in \mathbb{P}_{+}^{\mathcal{A}} \times \mathfrak{R}$ there exists a flag of Oseledets subspaces $ H_{\pi} = F_{[\lambda],\pi}^1  \supsetneq ... \supsetneq F_{[\lambda],\pi}^{k(\mathfrak{R})} \supsetneq \{0\} = F_{[\lambda],\pi}^{k(\mathfrak{R})+1}  $  with an associated Lyapunov spectrum $$\vartheta_{1}(\mathfrak{R})>...> \vartheta_{k(\mathfrak{R})}(\mathfrak{R}). $$ 


In \cite{Z1} it is shown that $k(\mathfrak{R}) \leq 2 g(\mathfrak{R})$ and that $\vartheta_j(\mathfrak{R})= - \vartheta_{k(\mathfrak{R})+1-j}(\mathfrak{R})$, for all $j =1,...,k(\mathfrak{R})$.
In \cite{AV} the authors proved that the Lyapunov spectrum of the restricted Zorich cocycle is \textit{simple} on every Rauzy class, that is, all Lyapunov exponents have multiplicity 1. Consequently, the spectral properties of the restricted Zorich cocycle can be summarized as follows.

\begin{theorem}\label{tspectralzor}
	Let $\mathfrak{R}\subset \mathfrak{S}({\mathcal{A}})$ be a Rauzy class. There exist Lyapunov exponents,
	\begin{equation*}\label{lspec}
	\vartheta_1(\mathfrak{R})>...>\vartheta_{g(\mathfrak{R})}(\mathfrak{R})>0>\vartheta_{g(\mathfrak{R})+1}(\mathfrak{R})=-\vartheta_{g(\mathfrak{R})}(\mathfrak{R})>...>\vartheta_{2g(\mathfrak{R})}(\mathfrak{R})=-\vartheta_{1}(\mathfrak{R}),
	\end{equation*} 
	 and, for $\mu_{\mathfrak{R}}$-almost every $([\lambda],\pi) \in \mathbb{P}_{+}^{\mathcal{A}}\times \mathfrak{R}$, there exists a complete flag $$ H_{\pi} = F_{[\lambda],\pi}^1  \supsetneq ... \supsetneq F_{[\lambda],\pi}^{2g(\mathfrak{R})} \supsetneq \{0\} = F_{[\lambda],\pi}^{2g(\mathfrak{R})+1},  $$ such that $B_{Z}([\lambda],\pi)|_{H_\pi} \cdot F_{[\lambda],\pi}^{j}=F_{\mathcal{Z}_R([\lambda],\pi)}^{j}$. For all $v \in F_{[\lambda],\pi}^{j} \backslash F_{[\lambda],\pi}^{j+1}$, $j=1,...,2g(\mathfrak{R})$,
	 $$ \lim_{n\rightarrow +\infty}\frac{1}{n}\log \|  B_{Z}([\lambda],\pi)|_{H_\pi}\cdot v \| = \vartheta_j(\mathfrak{R}).  $$
\end{theorem}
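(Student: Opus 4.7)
The plan is to obtain Theorem \ref{tspectralzor} as a packaging of three results that have already been cited in the excerpt: Oseledets' multiplicative ergodic theorem (Theorem \ref{toseledets}), the symmetry of the Zorich spectrum established in \cite{Z1}, and the Avila--Viana simplicity theorem \cite{AV}. Since the statement is essentially a compilation, there is no fundamentally new technical content to generate; the work consists in matching hypotheses and doing the dimension bookkeeping correctly.

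First, I would apply Theorem \ref{toseledets} to the restricted Zorich cocycle $B_Z|_{H_\pi}$ over $\mathcal{Z}_R$ with respect to the measure $\mu_{\mathfrak{R}}$. Ergodicity of $\mu_{\mathfrak{R}}$ for $\mathcal{Z}_R$ is already given; invariance of $H_\pi$, so that the restriction is a well-defined cocycle, follows from \eqref{BH} applied iteratively along the Zorich orbit; and the log-integrability required by Oseledets is precisely Theorem \ref{tzor}, since the restriction to an invariant subspace does not increase the norm (up to a dimension-dependent constant from the choice of compatible norms on $H_\pi$ and $\R^{\mathcal{A}}$). This step produces an integer $k(\mathfrak{R})$, distinct Lyapunov exponents $\vartheta_1(\mathfrak{R}) > \cdots > \vartheta_{k(\mathfrak{R})}(\mathfrak{R})$ whose multiplicities sum to $\dim H_\pi = 2g(\mathfrak{R})$, and for $\mu_{\mathfrak{R}}$-almost every $([\lambda],\pi)$ an equivariant Oseledets flag realizing these exponents with the claimed growth property. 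This already delivers the general shape of the conclusion, but not yet the precise form of the spectrum.

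Next, I would invoke the symmetry of the spectrum proved in \cite{Z1}: the Lyapunov spectrum of $B_Z|_{H_\pi}$ is symmetric about zero, with $\vartheta_j(\mathfrak{R}) = -\vartheta_{k(\mathfrak{R})+1-j}(\mathfrak{R})$ and matching multiplicities, together with the upper bound $k(\mathfrak{R}) \leq 2g(\mathfrak{R})$ recalled in the excerpt. The symmetry is a manifestation of the symplectic structure carried by $H_\pi$, which arises from the algebraic intersection form on the first absolute homology of the genus-$g(\mathfrak{R})$ translation surface associated to the Rauzy class and is preserved by the cocycle.

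Finally, I would invoke the Avila--Viana simplicity theorem \cite{AV}, which states that every Lyapunov exponent of the restricted Zorich cocycle has multiplicity one. Since the sum of multiplicities equals $2g(\mathfrak{R})$, simplicity forces $k(\mathfrak{R}) = 2g(\mathfrak{R})$ and the Oseledets flag is complete. The pairing $\vartheta_j(\mathfrak{R}) = -\vartheta_{2g(\mathfrak{R})+1-j}(\mathfrak{R})$ is then immediate from the symmetry, and no exponent can vanish: a zero exponent would be paired with itself and would therefore need multiplicity at least two, contradicting simplicity. Hence $\vartheta_{g(\mathfrak{R})}(\mathfrak{R}) > 0 > \vartheta_{g(\mathfrak{R})+1}(\mathfrak{R})$ and the full chain of strict inequalities displayed in the statement follows. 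The only delicate point in this plan is to confirm that the Avila--Viana simplicity result applies verbatim to the cocycle presented here, namely $B_Z|_{H_\pi}$ over $\mathcal{Z}_R$ with the measure $\mu_{\mathfrak{R}}$; this is precisely the setting treated in \cite{AV}, so no further verification is required.
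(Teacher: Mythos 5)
Your proposal matches the paper's own (implicit) argument exactly: the paper gives no separate proof of Theorem \ref{tspectralzor} but instead prefaces it with precisely the three ingredients you use — Oseledets (Theorem \ref{toseledets}) applied to the restricted Zorich cocycle $B_Z|_{H_\pi}$ with integrability from Theorem \ref{tzor}, the symmetry $\vartheta_j = -\vartheta_{k+1-j}$ and bound $k(\mathfrak{R}) \le 2g(\mathfrak{R})$ from \cite{Z1}, and the Avila--Viana simplicity theorem \cite{AV} — and then states the theorem as their combination. Your bookkeeping (simplicity forces $k=2g$, symmetry then excludes a zero exponent) is the right way to fill in what the paper leaves implicit, though the cleanest phrasing of the non-vanishing step is that a zero exponent would force $\vartheta_g = \vartheta_{g+1} = 0$, contradicting the strict ordering, rather than an appeal to a multiplicity-two self-pairing.
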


We say $([\lambda],\pi) \in \mathbb{P}_{+}^{\mathcal{A}}\times \mathfrak{R}$ is \textit{generic} if $([\lambda],\pi)$ is in the full measure set of $\mathbb{P}_{+}^{\mathcal{A}}\times \mathfrak{R}$ from Theorem \ref{tspectralzor}.

Let $\|\cdot\|_1:SL({\mathcal{A}},\Z)\rightarrow \R_+$ be the norm,
$$\|A\|_1= \sum_{\alpha \in {\mathcal{A}}}\sum_{\beta \in {\mathcal{A}}} |A_{\alpha \beta}|.$$

Denote by $\textrm{Leb}$ the Lebesgue measure in $\mathbb{P}_{+}^{\mathcal{A}}$ and by $c_{\mathfrak{R}}$ the counting measure in a Rauzy class $\mathfrak{R}$. The following theorem is a restatement of a result by Marmi, Moussa and Yoccoz \cite{MMY} and gives a bound for the growth of the Zorich cocycle for a full measure set of $([\lambda],\pi)$. The proof can be found in Section 4.7 in \cite{MMY}.
\begin{theorem}[\cite{MMY}]\label{tMMY}
	For $\textrm{Leb} \times c_{\mathfrak{R}}$-almost every $([\lambda],\pi) \in \mathbb{P}_+^{\mathcal{A}} \times \mathfrak{R}$ and $\varepsilon'>0$, there exists $C_{\varepsilon'}>0$ such that for any $m\geq0$,
	$$ \|  B_{Z}\left( \mathcal{Z}_R^{m}([\lambda],\pi)    \right)  \|_1 < C_{\varepsilon'} \|  B_{Z}^{(m)}\left(  [\lambda],\pi    \right)  \|_1^{\varepsilon'} $$
\end{theorem}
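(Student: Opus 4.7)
The plan is to combine two ingredients: the $L^1$ integrability of the Zorich cocycle generator (Theorem \ref{tzor}) and the strict positivity of the top Lyapunov exponent $\vartheta_1(\mathfrak{R})$ (Theorem \ref{tspectralzor}). Roughly, the first forces each individual step $B_Z\circ\mathcal{Z}_R^m$ to grow subexponentially along a typical orbit, while the second forces the cumulative product $B_Z^{(m)}$ to grow at a definite exponential rate; the desired polynomial comparison is then immediate. Throughout I work $\mu_{\mathfrak{R}}$-almost surely; since the Zorich measure $\mu_{\mathfrak{R}}$ is absolutely continuous with positive analytic density with respect to $\mathrm{Leb}\times c_{\mathfrak{R}}$, any $\mu_{\mathfrak{R}}$-a.s.\ statement transfers verbatim to a $\mathrm{Leb}\times c_{\mathfrak{R}}$-a.s.\ one.

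For the subexponential step, set $\phi=\log^+\|B_Z\|_1$; by equivalence of matrix norms in finite dimension together with Theorem \ref{tzor}, $\phi\in L^1(\mu_{\mathfrak{R}})$. Applying Birkhoff's ergodic theorem to $\phi$ under the ergodic map $\mathcal{Z}_R$ and writing $S_m\phi=\sum_{k=0}^{m-1}\phi\circ\mathcal{Z}_R^k$, one has $S_m\phi/m\to\int\phi\,d\mu_{\mathfrak{R}}$ pointwise a.s., so
\begin{equation*}
\frac{\phi(\mathcal{Z}_R^m([\lambda],\pi))}{m} = \frac{S_{m+1}\phi}{m+1}\cdot\frac{m+1}{m} - \frac{S_m\phi}{m} \longrightarrow 0 \quad \mu_{\mathfrak{R}}\text{-a.s.},
\end{equation*}
which gives $\log\|B_Z(\mathcal{Z}_R^m([\lambda],\pi))\|_1=o(m)$. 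For the exponential lower bound on the product, Theorem \ref{tspectralzor} provides, for $\mu_{\mathfrak{R}}$-a.e.\ $([\lambda],\pi)$, a vector $v\in F_{[\lambda],\pi}^1\setminus F_{[\lambda],\pi}^2\subset H_{\pi}$ with $\frac{1}{m}\log\|B_Z^{(m)}([\lambda],\pi)\cdot v\|_1\to\vartheta_1(\mathfrak{R})$; together with the elementary bound $\|Av\|_1\leq d\,\|A\|_1\,\|v\|_1$ (from entrywise norm comparison) this yields $\liminf_m\frac{1}{m}\log\|B_Z^{(m)}([\lambda],\pi)\|_1\geq\vartheta_1(\mathfrak{R})>0$.

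Combining these two facts, for any $\varepsilon'>0$ and $\mu_{\mathfrak{R}}$-a.e.\ $([\lambda],\pi)$ there exists $M=M(\varepsilon',[\lambda],\pi)$ such that for all $m\geq M$, $\log\|B_Z(\mathcal{Z}_R^m([\lambda],\pi))\|_1\leq\varepsilon'\log\|B_Z^{(m)}([\lambda],\pi)\|_1$, i.e.\ $\|B_Z(\mathcal{Z}_R^m([\lambda],\pi))\|_1\leq\|B_Z^{(m)}([\lambda],\pi)\|_1^{\varepsilon'}$. Since $B_Z^{(m)}\in SL({\mathcal{A}},\Z)$ forces $\|B_Z^{(m)}\|_1\geq 1$, the ratio $\|B_Z(\mathcal{Z}_R^m([\lambda],\pi))\|_1/\|B_Z^{(m)}([\lambda],\pi)\|_1^{\varepsilon'}$ is finite for each of the finitely many $m<M$, and setting $C_{\varepsilon'}$ to be the supremum over these finitely many values plus $1$ yields the conclusion. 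The only delicate point is ensuring the operator norm of the full cocycle on $\R^{\mathcal{A}}$ grows at least like $e^{m\vartheta_1(\mathfrak{R})}$, rather than only its restriction to $H_{\pi}$; the bound $\|Av\|_1\leq d\|A\|_1\|v\|_1$ applied to a generic Oseledets vector inside $H_{\pi}$ is precisely what achieves this transfer, so no genuine obstacle arises.
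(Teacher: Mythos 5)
The paper does not prove Theorem~\ref{tMMY}; it cites the result to \cite{MMY} (Section 4.7) and uses it as a black box. Your task therefore amounts to reconstructing an argument from the other results quoted in the paper, and your reconstruction is correct.

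Both ingredients are used correctly. For the subexponential control of the single step, the observation that $\phi\in L^1(\mu_{\mathfrak{R}})$ and $\mu_{\mathfrak{R}}$ is $\mathcal{Z}_R$-ergodic (Theorems~\ref{tzor} and the Zorich measure statement) yields $\phi\circ\mathcal{Z}_R^m/m\to 0$ a.s.\ via the standard telescoping trick with Birkhoff sums; the computation $\phi\circ\mathcal{Z}_R^m = S_{m+1}\phi - S_m\phi$ with $S_m\phi/m\to\int\phi$ is exactly right. For the exponential lower bound on the product, taking $v\in F^1_{[\lambda],\pi}\setminus F^2_{[\lambda],\pi}\subset H_\pi$ and using $\|B_Z^{(m)}v\|\le \mathrm{const}\cdot\|B_Z^{(m)}\|_1\|v\|$ together with $\vartheta_1(\mathfrak{R})>0$ from Theorem~\ref{tspectralzor} gives $\liminf_m\frac1m\log\|B_Z^{(m)}\|_1\ge\vartheta_1>0$; restricting to $H_\pi$ is harmless since that subspace is invariant under the full cocycle on $\R^{\mathcal{A}}$. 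Combining, for $m$ large $\log\|B_Z(\mathcal{Z}_R^m)\|_1\le\eps'\log\|B_Z^{(m)}\|_1$, and the finitely many small $m$ are absorbed into $C_{\eps'}$ using $\|B_Z^{(m)}\|_1\ge 1$ (the Zorich matrices are non-negative integer matrices of determinant one). The passage from $\mu_{\mathfrak{R}}$-a.e.\ to $\textrm{Leb}\times c_{\mathfrak{R}}$-a.e.\ is justified by the positive analytic density of $\mu_{\mathfrak{R}}$.

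One cosmetic point worth being explicit about: the quantifier ``for any $\eps'>0$ and $\mu_{\mathfrak{R}}$-a.e.\ $([\lambda],\pi)$'' should be read as ``there is a full-measure set on which, for every $\eps'>0$, \dots''. That is what your argument actually delivers, because the two almost-sure limits (Birkhoff and Oseledets) hold on a single full-measure set independent of $\eps'$, and the choice of $M$ and $C_{\eps'}$ is then made pointwise for each $\eps'$. Stating this once makes the proof airtight. Compared to what \cite{MMY} do in Section~4.7, your argument is a streamlined version that leans on the stated Zorich/Oseledets facts rather than deriving the integrability and spectral gap from scratch, which is entirely appropriate here.
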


Given $([\lambda],\pi) \in \mathbb{P}_{+}^{\mathcal{A}}\times \mathfrak{R}$ and $m \geq 0$, denote the sum of the $m$ first Zorich acceleration times by
$$ s^m([\lambda],\pi)  = \sum_{k<m} n (\mathcal{Z}_R^{k}([\lambda],\pi)).$$

So far the choice of vector norm $\| \cdot \|$ has not been relevant as Theorem \ref{toseledets} does not depend on any particular choice. However in what follows we consider $\| \cdot \|$ to be the euclidean norm.

In the following lemma we combine estimates from theorems \ref{tspectralzor} and \ref{tMMY} to obtain an important bound for the growth of the Rauzy cocycle, restricted to $F_{[\lambda],\pi}^{g(\mathfrak{R})+1} \backslash \{0\}$, for a full measure set of parameters.

\begin{lemma}\label{lsumrauzy}
	 For $\textrm{Leb} \times c_{\mathfrak{R}}$-almost every $([\lambda],\pi) \in \mathbb{P}_+^{\mathcal{A}} \times \mathfrak{R}$, there exists $K\geq 1$ such that for all $v \in F_{[\lambda],\pi}^{g(\mathfrak{R})+1} \backslash \{0\}$ we have
	\begin{equation}\label{SR'}
	\sum_{n=0}^{+\infty} \|B_{R}^{(n)}([\lambda],\pi)\cdot v \| < K \| v  \|.
	\end{equation}
\end{lemma}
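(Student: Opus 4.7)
The plan is to partition $\mathbb{N}$ into the blocks $[s^m,s^{m+1})$ delimited by Zorich acceleration times, to bound each block's contribution using Theorems \ref{tspectralzor} and \ref{tMMY}, and then sum a geometric series. By construction $B_R^{(s^m)}([\lambda],\pi)=B_Z^{(m)}([\lambda],\pi)$, so for $n=s^m+k$ with $0\leq k<n(\mathcal{Z}_R^m([\lambda],\pi))$ the cocycle identity gives $B_R^{(n)}([\lambda],\pi)=B_R^{(k)}(\mathcal{Z}_R^m([\lambda],\pi))\cdot B_Z^{(m)}([\lambda],\pi)$.

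Within a single block, the partial products are controlled by the full Zorich matrix: each Rauzy factor is of the form $I_d+E_{\alpha\beta}$, so the entries of the accumulated product only grow with $k$, and one checks entry-wise that $B_R^{(k)}(\mathcal{Z}_R^m([\lambda],\pi))\leq B_Z(\mathcal{Z}_R^m([\lambda],\pi))$. By equivalence of norms on $SL({\mathcal{A}},\Z)$ this gives $\|B_R^{(k)}(\mathcal{Z}_R^m([\lambda],\pi))\|\leq C_0\|B_Z(\mathcal{Z}_R^m([\lambda],\pi))\|$; since each Rauzy step adds at least one unit to $\|\cdot\|_1$, the block length also satisfies $n(\mathcal{Z}_R^m([\lambda],\pi))\leq \|B_Z(\mathcal{Z}_R^m([\lambda],\pi))\|_1$.

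Next I would invoke Theorem \ref{tspectralzor} twice. Applied to the full restricted cocycle it yields $\|B_Z^{(m)}([\lambda],\pi)\|\leq C_1\,e^{m(\vartheta_1(\mathfrak{R})+\delta)}$; applied to its further restriction to the invariant Oseledets subspace $F_{[\lambda],\pi}^{g(\mathfrak{R})+1}$, whose Lyapunov exponents are all strictly negative with top exponent $\vartheta_{g(\mathfrak{R})+1}(\mathfrak{R})$, it yields the uniform operator-norm bound $\|B_Z^{(m)}([\lambda],\pi)|_{F_{[\lambda],\pi}^{g(\mathfrak{R})+1}}\|\leq C_2\,e^{m(\vartheta_{g(\mathfrak{R})+1}(\mathfrak{R})+\delta)}$. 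Combining the first estimate with Theorem \ref{tMMY} gives $\|B_Z(\mathcal{Z}_R^m([\lambda],\pi))\|\leq C_3\,e^{m\varepsilon'(\vartheta_1(\mathfrak{R})+\delta)}$ for any $\varepsilon'>0$.

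Putting everything together, the contribution of the $m$-th block is bounded by $n(\mathcal{Z}_R^m)\cdot \|B_R^{(k)}(\mathcal{Z}_R^m)\|\cdot \|B_Z^{(m)}v\|\leq C'\,e^{m[2\varepsilon'(\vartheta_1(\mathfrak{R})+\delta)+\vartheta_{g(\mathfrak{R})+1}(\mathfrak{R})+\delta]}\|v\|$. Choosing $\varepsilon'$ and $\delta$ small enough makes the bracketed exponent strictly negative, so summing over $m$ yields a geometric series bounded by $K\|v\|$ for some finite $K=K([\lambda],\pi)$, proving \eqref{SR'}. The main subtlety is the uniformity in $v$: the form of Oseledets' theorem quoted in Theorem \ref{tspectralzor} is pointwise, and to obtain the required operator-norm decay one applies it to the restricted cocycle $B_Z|_{F^{g(\mathfrak{R})+1}}$ and uses that the top Lyapunov exponent of a cocycle controls the growth of its operator norm, which is standard in the finite-dimensional invertible setting.
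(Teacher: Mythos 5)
Your proposal follows the same strategy as the paper's proof: partition $\mathbb{N}$ into the Zorich blocks $[s^m,s^{m+1})$, dominate both the partial Rauzy products and the block lengths by $\|B_Z(\mathcal{Z}_R^m([\lambda],\pi))\|_1$, then combine Theorem \ref{tMMY} with the Oseledets decay from Theorem \ref{tspectralzor} on $F_{[\lambda],\pi}^{g(\mathfrak{R})+1}$ to make the $m$-th block contribute $O(e^{-cm})\|v\|$ and sum the resulting geometric series. The paper's version is identical in substance, merely working throughout with $\|\cdot\|_1$ and making the exponent choice explicit via $\varepsilon'=\tfrac{1}{4}\eta^2\vartheta_{g(\mathfrak{R})}(\mathfrak{R})/\vartheta_1(\mathfrak{R})$.
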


\begin{proof}
	
	By Theorem \ref{tspectralzor}, for $\mu_{\mathfrak{R}}$-almost every $([\lambda],\pi) \in \mathbb{P}_+^{\mathcal{A}} \times \mathfrak{R}$ and any $0<\eta<1$ there exists $K_{\eta}>0$ such that for every $m \geq 0$,
	\begin{equation*}\label{SR1}
	\|  B_{Z}^{(m)}\left(  [\lambda],\pi    \right)  \|_1< K_{\eta} e^{\eta^{-1}\vartheta_1(\mathfrak{R}) m}
	\end{equation*}
	As, by Theorem \ref{tzor}, $\mu_{\mathfrak{R}}$ has positive density, this also holds for $\textrm{Leb} \times c_{\mathfrak{R}}$-a.e. $([\lambda],\pi) \in \mathbb{P}_+^{\mathcal{A}} \times \mathfrak{R}$. Combined with Theorem \ref{tMMY}, for $\varepsilon'=\frac{1}{4}\eta^2 \vartheta_{g(\mathfrak{R})}(\mathfrak{R}) / \vartheta_1(\mathfrak{R}) $, this gives
	\begin{equation}\label{SR2}
	\|  B_{Z}\left(  \mathcal{Z}_R^{m}([\lambda],\pi)    \right)  \|_1< K_{\eta} C_{\varepsilon'} e^{\frac{1}{4}\eta\vartheta_{g(\mathfrak{R})}(\mathfrak{R}) m},
	\end{equation}
	for $\textrm{Leb} \times c_{\mathfrak{R}}$-a.e. $([\lambda],\pi) \in \mathbb{P}_+^{\mathcal{A}} \times \mathfrak{R}$.
	
	By Theorem \ref{tspectralzor} we also get that for $\textrm{Leb} \times c_{\mathfrak{R}}$-a.e. $([\lambda],\pi) \in \mathbb{P}_+^{\mathcal{A}} \times \mathfrak{R}$ there exists $K_{\eta}'>0$, such that, for any $v \in F_{[\lambda],\pi}^{g(\mathfrak{R})+1} \backslash \{0\}$ we have
	\begin{equation}\label{SR3}
	\|  B_{Z}^{(m)}\left(  [\lambda],\pi    \right) \cdot v  \|< K_{\eta}' e^{-\eta\vartheta_{g(\mathfrak{R})}(\mathfrak{R}) m} \| v \|.
	\end{equation}
	Let $\mathcal{E}_{\eta}$ denote the set of $([\lambda],\pi) \in \mathbb{P}_+^{\mathcal{A}} \times \mathfrak{R}$ for which there exists $K_{\eta}''>0$ such that
	\begin{equation}\label{SR4}
	\|  B_{Z}\left(  \mathcal{Z}_R^{m}([\lambda],\pi)    \right)  \|_1^{2} \cdot \|  B_{Z}^{(m)}\left(  [\lambda],\pi    \right) \cdot v  \| < K_{\eta}'' e^{-\frac{1}{2} \eta\vartheta_{g(\mathfrak{R})}(\mathfrak{R}) m} \| v \|,
	\end{equation}
	for all $v \in F_{[\lambda],\pi}^{g(\mathfrak{R})+1} \backslash \{0\}$ and $m\geq 0$. By combining \eqref{SR2} and \eqref{SR3} we get that $\mathcal{E}_{\eta} $ is a set of full $\textrm{Leb} \times c_{\mathfrak{R}}$ measure.\\


	Now, fix $0<\eta<1$ and $ ([\lambda],\pi) \in \mathcal{E}_{\eta} $. For $n \geq 0$, let $ M(n) =  \max\left\{  m \geq 0  : s^m([\lambda],\pi) \leq n      \right\}$. Also, given positive integers $k_1<k_2$ we denote
	$$B^{(k_1,k_2)}([\lambda],\pi) =  B([\lambda^{(k_2)}],\pi^{(k_2)}) \cdot B([\lambda^{(k_2-1)}],\pi^{(k_2-1)}) \cdot ... \cdot B([\lambda^{(k_1)}],\pi^{(k_1)}).  $$
	We have
	\begin{equation}\label{SR5}
	\| B_{R}^{(n)}([\lambda],\pi) \cdot v \|  \leq \max_{s^{M(n)}([\lambda],\pi) \leq k < n} \left \|  B^{\left(s^{M(n)}([\lambda],\pi),k\right)}([\lambda],\pi)    \cdot B_Z^{(  M(n) )}  ([\lambda],\pi) \cdot v   \right \| .
	\end{equation}
	It is clear that we have
	$$   \max_{s^{M(n)}([\lambda],\pi) \leq k < n}    \left\|  B^{\left(s^{M(n)}([\lambda],\pi),k\right)}([\lambda],\pi) \right\|_1      \leq \left \|   B_Z \left( \mathcal{Z}_R^{ M(n) }([\lambda],\pi)     \right)     \right \|_1   , $$
	hence, from \eqref{SR5}, for all $n \geq 0$ we get
	$$\| B_{R}^{(n)}([\lambda],\pi) \cdot v \| \leq  \|  B_{Z}(  \mathcal{Z}_R^{M(n)}([\lambda],\pi)    )  \|_1 \cdot \|  B_{Z}^{(M(n))}\left(  [\lambda],\pi    \right) \cdot v  \|,$$
	which combined with the fact that for all $m \geq 0$ we have $n(\mathcal{Z}_R^m([\lambda],\pi)) \leq \|  B_{Z}\left(  \mathcal{Z}_R^{m}([\lambda],\pi)    \right)  \|_1$,  gives
	$$  \sum_{n=0}^{+\infty} \|B_{R}^{(n)}([\lambda],\pi)\cdot v \|   \leq \sum_{m=0}^{+\infty} \|  B_{Z}\left(  \mathcal{Z}_R^{m}([\lambda],\pi)    \right)  \|_1^{2} \cdot \|  B_{Z}^{(m)}\left(  [\lambda],\pi    \right) \cdot v  \| . $$
	This, combined with \eqref{SR4}, which holds since $([\lambda],\pi) \in \mathcal{E}_{\eta}$, shows that by taking $K=\max\{K_{\eta}'' (1-e^{-1/2\eta \vartheta_{g(\mathfrak{R})}(\mathfrak{R})})^{-1},1\}$ we get \eqref{SR'} as intended.
\end{proof}


Recalling \eqref{crauzytor} note that for any $\lambda, \lambda' \in \R_{+}^{\mathcal{A}}$ such that $[\lambda] = [\lambda']$ we have $B_{\T^{\mathcal{A}}}(\lambda,\pi)= B_{\T^{\mathcal{A}}}(\lambda',\pi)$ and thus $B_{\T^{\mathcal{A}}}(\lambda,\pi)$ admits a projectivization which we denote $B_{\T^{\mathcal{A}}}([\lambda],\pi)$ and also call projection of the Rauzy cocycle on $\T^{\mathcal{A}}$.

Recall that $p:\R^{\mathcal{A}} \rightarrow  \mathbb{T}^{\mathcal{A}}$ is the natural projection,
$$ p(v) = \left(  \left( v    \right)_{\alpha}\mod 2 \pi    \right)_{\alpha \in \mathcal{A}}, \quad \textrm{ for all} \ v \in \R^{\mathcal{A}}.$$

The \textit{flat torus} is the torus $\T^{\mathcal{A}}$ viewed as a Riemannian manifold equipped with the \textit{flat Riemannian metric}, this is, the pushforward under $p$ of the euclidean metric in $\R^{\mathcal{A}}$.
The flat Riemannian metric induces a distance on the torus $d_{\T^{\mathcal{A}}}: \T^{\mathcal{A}} \times \T^{\mathcal{A}} \rightarrow \R_+$ such that
$$d_{\T^{\mathcal{A}}}(\theta,\theta') = \inf \left \{     \| v-v' \| : v \in p^{-1}(\theta), \ v' \in p^{-1}(\theta')  \right \} , $$
for any $\theta, \theta' \in \T^{\mathcal{A}}$.

Given $\delta>0$ and  a generic $([\lambda],\pi) \in \mathbb{P}_+^{\mathcal{A}} \times \mathfrak{R}$, let
$$  E_{[\lambda],\pi}^{\delta} = \left\{  v \in  F_{[\lambda],\pi}^{g(\mathfrak{R})+1} \backslash \{0\} : \| v \| <\delta    \right\} ,$$
and let $ W_{[\lambda],\pi}^{\delta} = p \left(  E_{[\lambda],\pi}^{\delta}  \right) $.

 Recall \eqref{thetan}, which given $\theta \in \T^{\mathcal{A}}$ defines a sequence $\{\theta^{(n)}\}_{n\geq 0}$ on $\mathbb{T}^{\mathcal{A}}$ which is used to construct the breaking sequence $\{\gamma_{\theta}^{(n)}\}_{n\geq 0}$.
The following lemma states that for a full measure set of $([\lambda],\pi)$, and for sufficiently small $\delta>0$, and all $\theta \in W_{[\lambda],\pi}^{\delta}$ the sum of all $d_{\T^{\mathcal{A}}}(\theta^{(n)},0)$ is bounded.

\begin{lemma}\label{lsumtor}
	For $\textrm{Leb} \times c_{\mathfrak{R}}$-almost every $([\lambda],\pi) \in \mathbb{P}_+^{\mathcal{A}} \times \mathfrak{R}$, there exists $K\geq1$ and $\delta>0$ such that for all $\theta \in W_{[\lambda],\pi}^{\delta}$ we have
	\begin{equation}\label{ST'}
	\sum_{n=0}^{+\infty}  d_{\T^{\mathcal{A}}}(\theta^{(n)},0) < K d_{\T^{\mathcal{A}}}(\theta,0).
	\end{equation}
\end{lemma}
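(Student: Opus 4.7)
The plan is to reduce the torus inequality to the Euclidean inequality already supplied by Lemma \ref{lsumrauzy} by lifting through the projection $p:\R^{\mathcal{A}}\to \T^{\mathcal{A}}$ and using the fact that the Rauzy cocycle on the torus is, by the very definition in \eqref{crauzytor}, the reduction mod $2\pi$ of the integer-valued Rauzy cocycle on $\R^{\mathcal{A}}$.

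First I would fix $([\lambda],\pi)$ in the intersection of the full-measure set of Lemma \ref{lsumrauzy} (which provides a constant $K$) with the full-measure set of Theorem \ref{tspectralzor} (which gives the flag $F_{[\lambda],\pi}^{\bullet}$ so that $E_{[\lambda],\pi}^{\delta}$ is defined). Then I would choose $\delta < \pi$; the role of this choice is the following elementary observation, which is the one genuinely new step: if $v \in \R^{\mathcal{A}}$ satisfies $\|v\|<\pi$ in the Euclidean norm, then $|v_{\alpha}|<\pi$ for every $\alpha \in \mathcal{A}$, and consequently for every nonzero $k \in \Z^{\mathcal{A}}$ there is some coordinate $\beta$ with $|v_{\beta}+2\pi k_{\beta}|\geq \pi > |v_{\beta}|$, giving $\|v+2\pi k\|>\|v\|$. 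Hence $v$ itself realizes the minimum over the coset $v+2\pi \Z^{\mathcal{A}}$, so
\begin{equation*}
d_{\T^{\mathcal{A}}}(p(v),0) = \|v\|.
\end{equation*}

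Next, given $\theta \in W_{[\lambda],\pi}^{\delta}$, by definition of $W_{[\lambda],\pi}^{\delta}=p(E_{[\lambda],\pi}^{\delta})$ I would pick $v \in F_{[\lambda],\pi}^{g(\mathfrak{R})+1}\setminus\{0\}$ with $\|v\|<\delta$ and $p(v)=\theta$. By the choice of $\delta$ the identity above gives $d_{\T^{\mathcal{A}}}(\theta,0) = \|v\|$. Moreover, by \eqref{crauzytor} we have $\theta^{(n)} = p\bigl(B_R^{(n)}([\lambda],\pi)\cdot v\bigr)$ for every $n\geq 0$, and since the projection $p$ is $1$-Lipschitz with respect to the flat metric,
\begin{equation*}
d_{\T^{\mathcal{A}}}(\theta^{(n)},0) \leq \bigl\| B_R^{(n)}([\lambda],\pi)\cdot v \bigr\|.
\end{equation*}

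Summing over $n$ and applying Lemma \ref{lsumrauzy} to the nonzero vector $v\in F_{[\lambda],\pi}^{g(\mathfrak{R})+1}$ yields
\begin{equation*}
\sum_{n=0}^{+\infty} d_{\T^{\mathcal{A}}}(\theta^{(n)},0) \;\leq\; \sum_{n=0}^{+\infty} \bigl\| B_R^{(n)}([\lambda],\pi)\cdot v \bigr\| \;<\; K\|v\| \;=\; K\, d_{\T^{\mathcal{A}}}(\theta,0),
\end{equation*}
which is \eqref{ST'} with the same constant $K$ as in Lemma \ref{lsumrauzy}. The proof contains essentially no obstacle: the only point that requires care is the identification $\|v\|=d_{\T^{\mathcal{A}}}(p(v),0)$, which is the reason $\delta$ must be chosen smaller than $\pi$, and everything else is a direct combination of the definitions of $W_{[\lambda],\pi}^{\delta}$ and $B_{\T^{\mathcal{A}}}^{(n)}$ with the Euclidean summability from Lemma \ref{lsumrauzy}.
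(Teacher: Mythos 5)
Your proof is correct and follows essentially the same approach as the paper: both lift $\theta$ to the unique preimage $v\in E_{[\lambda],\pi}^{\delta}$, invoke the identity $d_{\T^{\mathcal{A}}}(p(v),0)=\|v\|$ for $\|v\|<\pi$, and reduce to Lemma \ref{lsumrauzy}. The one small difference is cosmetic but slightly cleaner on your side: the paper fixes $\delta=\pi K^{-1}$ so that $\|B_R^{(n)}\cdot v\|<\pi$ for all $n$ and hence the distance on the torus equals the Euclidean norm for every term, whereas you only need $\delta<\pi$ and use the $1$-Lipschitz bound $d_{\T^{\mathcal{A}}}(\theta^{(n)},0)\leq\|B_R^{(n)}\cdot v\|$, which suffices since the exact equality is only required at $n=0$ to recover $d_{\T^{\mathcal{A}}}(\theta,0)$ on the right-hand side.
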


\begin{proof}
By Lemma \ref{lsumrauzy}, for $\textrm{Leb} \times c_{\mathfrak{R}}$-a.e. $([\lambda],\pi) \in \mathbb{P}_+^{\mathcal{A}} \times \mathfrak{R}$, there exists $K>1$ such that for all $v \in E_{[\lambda],\pi}^{\delta}$, with $\delta = \pi \cdot K^{-1}$, for all $n\geq 0$ we have
\begin{equation*}\label{}
\| B_R^{(n)}([\lambda],\pi)\cdot v \| <\pi,
\end{equation*}

Moreover, is is clear that if $\| v\| < \pi$ we have $d_{\T^{\mathcal{A}}}(p(v),0) = \| v  \|$, thus, for all $n\geq 0$ we have
\begin{equation}\label{ST1}
d_{\T^{\mathcal{A}}}\left(p  \left( B_R^{(n)}([\lambda],\pi)\cdot v   \right),0\right)  =  \left \| B_R^{(n)}([\lambda],\pi)\cdot v   \right \|,
\end{equation}
Also note that as $\delta \leq \pi$,  the restriction $p|_{E_{[\lambda],\pi}^{\delta}} : E_{[\lambda],\pi}^{\delta} \rightarrow W_{[\lambda],\pi}^{\delta}$ is a bijection and thus $p^{-1}(\theta) \cap E_{[\lambda],\pi}^{\delta}$ contains a single point which we denote by $p_{\delta}^{-1}(\theta)$. Take $\theta \in W_{[\lambda],\pi}^{\delta}$.
It is clear by \eqref{crauzytor} that we have
$$  B_{\T^{\mathcal{A}}}^{(n)}([\lambda],\pi)  \cdot \theta   =  p \left(   B_{R}^{(n)}([\lambda],\pi) \cdot  p_{\delta}^{-1}(\theta)   \right) ,       $$
which combined with \eqref{ST1} yields $   d_{\T^{\mathcal{A}}}(B_{\T^{\mathcal{A}}}^{(n)}([\lambda],\pi)  \cdot \theta ,0)   =  \|   B_{R}^{(n)}([\lambda],\pi) \cdot  p_{\delta}^{-1}(\theta) \|$, for all $n \geq 0$. By \eqref{thetan} and Lemma \ref{lsumrauzy} this gives \eqref{ST'} finishing our proof.
\end{proof}

We say a map $\gamma : I \rightarrow \C$ is Lipschitz if $\{(\Re(\gamma(x)), \Im(\gamma(x))) : x \in I   \}$ is the graph of a Lipschitz map. The following theorem shows that for a generic $([\lambda],\pi)$ and sufficiently small $\delta>0$, when $\theta \in W_{[\lambda],\pi}^{\delta}$  the sequence $\gamma_{\theta}^{(n)}$ converges to a a Lipschitz map $\gamma_{\theta}$ which is an isometric embedding of $(I,f_{\lambda,\pi})$ into any PWI that is $\theta$-adapted to $(\lambda,\pi)$.
\begin{theorem}\label{taexistembeds}
	For $\textrm{Leb} \times c_{\mathfrak{R}}$-almost every $([\lambda],\pi) \in \mathbb{P}_+^{\mathcal{A}} \times \mathfrak{R}$, there exists a $\delta>0$ such that for all $\theta \in W_{[\lambda],\pi}^{\delta}$ there exists a Lipschitz map $\gamma_{\theta} : I \rightarrow \C$, which is an isometric embedding of $(I,f_{\lambda,\pi})$ into any PWI that is $\theta$-adapted to $(\lambda,\pi)$.
\end{theorem}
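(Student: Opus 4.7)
The plan is to reduce to Theorem \ref{tOmembed}: it suffices to show that, for $\delta>0$ sufficiently small and every $\theta\in W_{[\lambda],\pi}^{\delta}$, the breaking sequence $\{\gamma_{\theta}^{(n)}\}$ converges pointwise to a topological embedding $\gamma_{\theta}:I\to\C$. Once this is established, Theorem \ref{tOmembed} automatically yields the isometric embedding property into every PWI in $\mathcal{F}_{\theta}$, and the Lipschitz bound is built in because every $\gamma_{\theta}^{(n)}\in\mathcal{PL}(|I|)$ is 1-Lipschitz and this property persists under uniform limits.

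I would first establish uniform convergence of $\{\gamma_{\theta}^{(n)}\}$. From \eqref{gamman}, $\gamma_{\theta}^{(n+1)}=\mathfrak{Br}(\theta_{\beta_{1,n}}^{(n)},J^{(n+1)})\cdot\gamma_{\theta}^{(n)}$; combining the explicit formula \eqref{breq1} with Lemma \ref{brth1} gives an absolute constant $C$ such that
\[
\|\gamma_{\theta}^{(n+1)}-\gamma_{\theta}^{(n)}\|_{\infty}\leq C|I|\,d_{\T^{\mathcal{A}}}(\theta^{(n)},0),
\]
provided $d_{\T^{\mathcal{A}}}(\theta^{(n)},0)<\pi$, so that the canonical representative of $\theta_{\beta_{1,n}}^{(n)}$ in $(-\pi,\pi]$ is dominated in absolute value by $d_{\T^{\mathcal{A}}}(\theta^{(n)},0)$. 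Summing in $n$ and invoking Lemma \ref{lsumtor}, for $\mathrm{Leb}\times c_{\mathfrak{R}}$-almost every $([\lambda],\pi)$ there exist $K\geq 1$ and $\delta_{0}>0$ such that, for every $\theta\in W_{[\lambda],\pi}^{\delta_{0}}$,
\[
\sum_{n\geq 0}\|\gamma_{\theta}^{(n+1)}-\gamma_{\theta}^{(n)}\|_{\infty}\leq C|I|K\,d_{\T^{\mathcal{A}}}(\theta,0)\leq C|I|K\delta_{0}.
\]
The sequence is therefore uniformly Cauchy in $C(I,\C)$ and converges uniformly to a 1-Lipschitz map $\gamma_{\theta}$.

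The main obstacle is showing that $\gamma_{\theta}$ is a topological embedding, after shrinking $\delta_{0}$ if necessary; uniform limits of injective continuous maps are not in general injective. My strategy has two complementary ingredients. A parallel computation to the one above, applied to the separation of two points via \eqref{breq1} and Lemma \ref{brth1}, gives
\[
\bigl|\,|\gamma_{\theta}^{(n+1)}(x)-\gamma_{\theta}^{(n+1)}(y)|-|\gamma_{\theta}^{(n)}(x)-\gamma_{\theta}^{(n)}(y)|\,\bigr|\leq C'|I|\,d_{\T^{\mathcal{A}}}(\theta^{(n)},0),
\]
which after summation and passage to the limit becomes $|\gamma_{\theta}(x)-\gamma_{\theta}(y)|\geq |x-y|-C'|I|K\delta$, so that any identification $\gamma_{\theta}(x)=\gamma_{\theta}(y)$ forces $|x-y|\leq C'|I|K\delta$. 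To exclude identifications of nearby points I pass to the limit in Theorem \ref{fund_lemma}: the isometries $T^{(n,0)}_{\alpha}$ whose expressions via \eqref{hatTnm} depend continuously on the values $\gamma_{\theta}^{(n)}$ takes at the partition endpoints $x^{(0)}_{0,j},x^{(0)}_{1,j}$ converge to well-defined isometries that assemble into a piecewise isometric action $T$ on $\gamma_{\theta}(I)$, and the quasi-embedding identity on $I\setminus f_{\lambda,\pi}^{-1}(I^{(n)})$ passes in the limit to $T\circ\gamma_{\theta}=\gamma_{\theta}\circ f_{\lambda,\pi}$. Any non-trivial fiber of $\gamma_{\theta}$ would then be $f_{\lambda,\pi}$-invariant, and the minimality of $f_{\lambda,\pi}$ implied by IDOC forces such a fiber to be dense in $I$, contradicting the long-range separation bound above once $\delta$ is taken sufficiently small.

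Once injectivity is secured, $\gamma_{\theta}$ is a continuous 1-Lipschitz injection from $I$ into $\C$, and is a topological embedding since every compact subinterval $[0,a]\subset I$ is mapped homeomorphically onto its image by a continuous injection from a compact Hausdorff space. Hence $\theta\in\Theta_{\lambda,\pi}'$, and Theorem \ref{tOmembed} delivers that $\gamma_{\theta}$ is an isometric embedding of $(I,f_{\lambda,\pi})$ into any PWI that is $\theta$-adapted to $(\lambda,\pi)$.
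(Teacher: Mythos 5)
Your uniform-convergence argument is essentially the paper's: you apply Lemma~\ref{brth1} to bound $\|\gamma_\theta^{(n+1)}-\gamma_\theta^{(n)}\|_\infty$ by a multiple of $d_{\T^{\mathcal{A}}}(\theta^{(n)},0)$, sum via Lemma~\ref{lsumtor}, and conclude that $\{\gamma_\theta^{(n)}\}$ is Cauchy in $\mathcal{C}(I,\C)$. That part is fine.

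The injectivity argument, however, has a genuine gap, and you also misread what ``Lipschitz'' means here. The paper defines a map $\gamma:I\to\C$ to be \emph{Lipschitz} when the \emph{image} $\{(\Re\gamma(x),\Im\gamma(x)):x\in I\}$ is the graph of a Lipschitz function, i.e.\ the ratio $|\Im\gamma(x)-\Im\gamma(y)|/|\Re\gamma(x)-\Re\gamma(y)|$ is uniformly bounded. This is strictly stronger than $1$-Lipschitz continuity of the parametrization (which indeed passes trivially to the uniform limit but does not prevent the curve from looping back on itself). The paper's actual proof bounds the total turning angle by $\sum_n d_{\T^{\mathcal{A}}}(\theta^{(n)},0)<\varphi<\pi/2$, so every segment of $\gamma_\theta^{(n)}$ has slope at most $\tan\varphi$, hence $\Re\gamma_\theta^{(n)}$ increases at rate at least $\cos\varphi>0$, and this monotonicity survives the limit, giving injectivity directly and for free. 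Your proposal never establishes that $\gamma_\theta$ is a graph.

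Your alternative route through the dynamics is circular where it matters. To pass the quasi-embedding identity of Theorem~\ref{fund_lemma} to the limit you need a well-defined piecewise map $T$ on $\gamma_\theta(I)$ with $T\circ\gamma_\theta=\gamma_\theta\circ f_{\lambda,\pi}$, but the piecewise definition of $T$ is ambiguous exactly when some $z$ lies in $\gamma_\theta(I_\alpha)\cap\gamma_\theta(I_\beta)$ with $\alpha\neq\beta$, which is precisely the failure of injectivity you are trying to rule out. Even granting the conjugacy, the claim that ``any non-trivial fiber of $\gamma_\theta$ would be $f_{\lambda,\pi}$-invariant'' is not correct: the conjugacy shows only that $f_{\lambda,\pi}$ maps fibers to fibers (preserves the quotient equivalence relation), not that a given fiber is $f_{\lambda,\pi}$-invariant, so the jump to ``such a fiber is dense by minimality'' does not follow. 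Your long-range separation estimate $|\gamma_\theta(x)-\gamma_\theta(y)|\geq|x-y|-C'|I|K\delta$ is correct and useful, but by itself it rules out only identifications of far-apart points; to finish by contradiction you would have to show that any near identification propagates under the dynamics to a far one, which requires the conjugacy you cannot yet invoke. The paper's slope argument avoids all of this.
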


\begin{proof}
	Consider the space $\mathcal{C}(I,\C)$ of continuous maps from the interval $I$, to $\C$. Note that this is a Banach space for the supremum norm $\|.\|_{\infty}$. We also have that $\gamma_{\theta}^{(n)} \in \mathcal{C}(I,\C)$ for all $n \geq 0$, since $\gamma_{\theta}^{(0)}$ is continuous and, by Lemma \ref{lPL}, $\mathfrak{Br}\left(  \theta_{\beta_{1,n-1}}^{(n-1)} ,  J^{(n)}\right) \cdot \mathcal{C}(I,\C)\subseteq \mathcal{C}(I,\C)$.
	
	Take any $\varphi \in (0, \pi/2 )$. By Lemma \ref{lsumtor}, there exists a set $\mathcal{E}\subseteq \mathbb{P}_+^{\mathcal{A}} \times \mathfrak{R}$ of full $\textrm{Leb} \times c_{\mathfrak{R}}$ measure such that for every $([\lambda],\pi) \in \mathcal{E}$, there exists $K\geq1$ and $0<\delta< \varphi K^{-1}$ such that for all $\theta \in W_{[\lambda],\pi}^{\delta}$ we have \eqref{ST'}.
	
	Take $([\lambda],\pi) \in \mathcal{E}$ and $\theta \in W_{[\lambda],\pi}^{\delta}$. For all  $x \in I$ we have
	\begin{equation*}\label{aexist5}
	\left| \gamma_{\theta}^{(n+1)}(x)-\gamma_{\theta}^{(n)}(x)\right|= \left| \mathfrak{Br}\left( \theta_{\beta_{1,n}}^{(n)},  J^{(n+1)}\right) \cdot \gamma_{\theta}^{(n)}(x)-\gamma_{\theta}^{(n)}(x)       \right|.
	\end{equation*}
	Denoting, as in \eqref{Jn}, by $r(n)$ the number of intervals of $J^{(n+1)}$, by \eqref{breq1} this gives
	\begin{equation*}\label{aexist6}
	\left| \gamma_{\theta}^{(n+1)}(x)-\gamma_{\theta}^{(n)}(x)\right|\leq \max_{k < r(n)}\left\{ |\underline{\epsilon}_k| , |\overline{\epsilon}_k| \right\} +\sup_{x \in I}|\gamma_{\theta}^{(n)}(x)(1-e^{i \theta_{\beta_{1,n}}^{(n)} })|.
	\end{equation*}
	Since
	\begin{equation*}
	\sup_{x \in I}\left|  \gamma_{\theta}^{(n)}(x)(1-e^{i \theta_{\beta_{1,n}}^{(n)} })     \right| \leq 2 |\lambda|\sin\left( \theta_{\beta_{1,n}}^{(n)}  /2\right),
	\end{equation*}
	by Lemma \ref{brth1} we get
	\begin{equation*}\label{aexist7}
	\left| \gamma_{\theta}^{(n+1)}(x)-\gamma_{\theta}^{(n)}(x)\right|\leq 4|\lambda|\sin\left( \theta_{\beta_{1,n}}^{(n)}/2  \right).
	\end{equation*}
	Therefore, as $\theta_{\beta_{1,n}}^{(n)} \leq d_{\T^{\mathcal{A}}}(\theta^{(n)},0)$  there exists $C>0$ such that for all $n\geq 0$,
	\begin{equation*}
	\left| \gamma_{\theta}^{(n+1)}(x)-\gamma_{\theta}^{(n)}(x)\right|\leq C |\lambda|  d_{\T^{\mathcal{A}}}(\theta^{(n)},0).
	\end{equation*}
	Now take $m$, $n \in \N$ such that $m>n$. Note that we have
	\begin{equation*}
	\| \gamma_{\theta}^{(m)}- \gamma_{\theta}^{(n)} \|_{\infty}\leq \sum_{k=0}^{m-n-1}\| \gamma_{\theta}^{(m-k)}- \gamma_{\theta}^{(m-k-1)} \|_{\infty},
	\end{equation*} 
	and therefore
	\begin{equation}\label{aexist8}
	\| \gamma_{\theta}^{(m)}- \gamma_{\theta}^{(n)} \|_{\infty}\leq C |\lambda| \sum_{k=n}^{m-1}  d_{\T^{\mathcal{A}}}(\theta^{(k)},0),
	\end{equation} 
	From \eqref{ST'} by taking a sufficiently large $N>0$ and considering $N<n<m$ the righthand side  of \eqref{aexist8} can be made arbitrarily small. Thus $\{ \gamma_{\theta}^{(n)} \}_{n \geq 0}$ is a Cauchy sequence in  $\mathcal{C}(I,\C)$ and therefore it must converge to a unique limit $\gamma_{\theta} \in \mathcal{C}(I,\C)$.\\

	As for all $n \geq 0$, $\gamma_{\theta}^{(n)} \in \mathcal{C}(I,\C)$, by \eqref{gamman} it is simple to see that for any $x, y \in I$, $x \neq y$, we have
	$$\frac{  \left|  \Im(\gamma_{\theta}^{(n)}(x)) -  \Im(\gamma_{\theta}^{(n)}(y))  \right| }{  \left|  \Re(\gamma_{\theta}^{(n)}(x)) -  \Re(\gamma_{\theta}^{(n)}(y)) \right|} \leq \tan \left(     \sum_{n=1}^{+ \infty} \theta_{\beta_{1,n-1}}^{(n-1)}     \right) . $$
	
	For any map $\gamma:I\rightarrow \C$, its Lipschitz constant $L(\gamma)$ is given by
	$$ L(\gamma) = \sup_{x, y \in I, \ x\neq y}  \frac{  \left|  \Im(\gamma(x)) -  \Im(\gamma(y))  \right| }{  \left|  \Re(\gamma(x)) -  \Re(\gamma(y)) \right|} . $$
	Hence, in particular we get,	
	$$  \arctan(L(\gamma_{\theta}^{(n)}))\leq  \sum_{n=0}^{+ \infty} d_{\T^{\mathcal{A}}}(\theta^{(n)},0), $$
	which, as $\delta < \varphi K^{-1}$,  by \eqref{ST'} gives $ \arctan(L(\gamma_{\theta}^{(n)}))\leq  \varphi $.
	Clearly $L(\gamma_{\theta} )\leq  \sup_{n\geq 0}L(\gamma_{\theta}^{(n)})$, and as $\varphi<\pi/2$ this shows that $L(\gamma_{\theta} )< + \infty$ and thus $\gamma_{\theta}$ is a Lipschitz map. In particular it is continuous and injective and thus a topological embedding.
	
	This proves that $W_{[\lambda],\pi}^{\delta}\subseteq  \Theta_{\lambda,\pi}' $ and therefore by Theorem \ref{tOmembed}, for any $\theta \in W_{[\lambda],\pi}^{\delta}$, $\gamma_{\theta}$ is an isometric embedding of $(I,f_{\lambda,\pi})$ into any PWI that is $\theta$-adapted to $(\lambda,\pi)$.
\end{proof}


As described in \cite{AGPR}, we can extend Rauzy-Veech induction to PWIs which admit embeddings of IETs as follows. Assume $(I,f_{\lambda,\pi})$ has an embedding by $\gamma_{\theta}$ into $(X,T)$. Define the map $\mathcal{S}(T)$ as the first return map under $T$ to $X^*$, where
\begin{equation*}
X^*=\left\{\begin{array}{ll}
\vspace{0.2cm}
\bigcup_{\alpha \neq \beta_0} X_{\alpha} \cup (X_{\beta_{0}} \cap T(X_{\beta_{1}})) , & \textrm{if} \ (\lambda,\pi) \ \textrm{has type} \ 0,\\
\bigcup_{\alpha \neq \beta_0} X_{\alpha} , & \textrm{if} \ (\lambda,\pi) \ \textrm{has type} \ 1.
\end{array}\right.
\end{equation*}
It is clear that $(X^*,\mathcal{S}(T))$ is again a $d'$-PWI, with possibly $d' \neq d$. Denote by $\mathcal{A}'$ an alphabet with $d'$ symbols and denote by $\{X_{\alpha'}^*\}_{\alpha' \in \mathcal{A}'}$ the partition of $X^*$. It is simple to see that there is a collection of $d$ symbols $\mathcal{A} \subseteq \mathcal{A}'$, possibly after relabeling, such that $X_{\alpha'}^* \cap \gamma_{\theta}(I^{(1)}) \neq \emptyset$ if and only if $\alpha' \in \mathcal{A}$. Define $X'= \bigcup_{\alpha \in \mathcal{A}}X_{\alpha}^*$.

Now, $(X',\mathcal{S}(T))$ is $\theta^{(1)}$-adapted to $(\lambda^{(1)},\pi^{(1)})$ and, by Theorem 2.3 in \cite{AGPR}, the restriction of $\gamma_{\theta}$ to $I^{(1)}$ is an embedding of $(I^{(1)},f_{\lambda^{(1)},\pi^{(1)}})$ into $(X',\mathcal{S}(T))$. 

It is thus possible to iterate this procedure by setting $(X^{(0)}, \mathcal{S}^{(0)}(T)) = (X,T)$, and  $\left( X^{(n)}, \mathcal{S}^{(n)}(T) \right) = \left( (X^{(n-1)})', \mathcal{S}(\mathcal{S}^{(n-1)}(T)) \right)$ for $n \geq 1$. The following lemma easily follows from Theorem \ref{tOmembed}.
\begin{lemma}\label{LS(T)}
	Let $(\lambda,\pi)\in \R_+^{\mathcal{A}}\times \mathfrak{R}$, $\theta \in \Theta_{\lambda,\pi}'$ and $(X,T)$ be a PWI $\theta$-adapted to $(\lambda,\pi)$.
	Then for all $n\geq 0$, $(X^{(n)},\mathcal{S}^{(n)}(T))$ is $\theta^{(n)}$-adapted to $(\lambda^{(n)},\pi^{(n)})$ and the restriction of $\gamma_{\theta}$ to $I^{(n)}$ is an embedding of $(I^{(n)},f_{\lambda^{(n)},\pi^{(n)}})$ into $(X^{(n)},\mathcal{S}^{(n)}(T))$. 
\end{lemma}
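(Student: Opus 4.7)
The plan is to argue by induction on $n$. For the base case $n=0$, observe that $(X^{(0)}, \mathcal{S}^{(0)}(T)) = (X, T)$, $\theta^{(0)} = \theta$, and $(\lambda^{(0)}, \pi^{(0)}) = (\lambda, \pi)$, so the $\theta^{(0)}$-adaptation is given by hypothesis. Since $\theta \in \Theta_{\lambda,\pi}'$, Theorem \ref{tOmembed} produces $\gamma_{\theta}$ as an isometric embedding of $(I, f_{\lambda,\pi})$ into $(X, T)$, which establishes the $n=0$ case (note $\gamma_{\theta}|_{I^{(0)}} = \gamma_{\theta}$).

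For the inductive step, suppose that $(X^{(n)}, \mathcal{S}^{(n)}(T))$ is $\theta^{(n)}$-adapted to $(\lambda^{(n)}, \pi^{(n)})$ and that the restriction $\gamma_{\theta}|_{I^{(n)}}$ is an embedding of $(I^{(n)}, f_{\lambda^{(n)},\pi^{(n)}})$ into $(X^{(n)}, \mathcal{S}^{(n)}(T))$. I would then apply the one-step Rauzy--Veech construction from the paragraph preceding the lemma verbatim, with $(X, T)$, $(\lambda, \pi)$, $\theta$, and $\gamma_{\theta}$ replaced respectively by $(X^{(n)}, \mathcal{S}^{(n)}(T))$, $(\lambda^{(n)}, \pi^{(n)})$, $\theta^{(n)}$, and $\gamma_{\theta}|_{I^{(n)}}$. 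The construction produces a PWI $((X^{(n)})', \mathcal{S}(\mathcal{S}^{(n)}(T))) = (X^{(n+1)}, \mathcal{S}^{(n+1)}(T))$ that is $B_{\T^{\mathcal{A}}}(\lambda^{(n)}, \pi^{(n)}) \cdot \theta^{(n)}$-adapted to $\mathcal{R}(\lambda^{(n)}, \pi^{(n)})$. By the cocycle relation derived from \eqref{BR} and \eqref{crauzytor} this rotation vector is exactly $\theta^{(n+1)}$, and by definition \eqref{Rauzindn} gives $\mathcal{R}(\lambda^{(n)}, \pi^{(n)}) = (\lambda^{(n+1)}, \pi^{(n+1)})$. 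Finally, Theorem 2.3 of \cite{AGPR} (invoked in the same paragraph) yields that the restriction of $\gamma_{\theta}|_{I^{(n)}}$ to $I^{(n+1)}$, which equals $\gamma_{\theta}|_{I^{(n+1)}}$, is an embedding of $(I^{(n+1)}, f_{\lambda^{(n+1)}, \pi^{(n+1)}})$ into $(X^{(n+1)}, \mathcal{S}^{(n+1)}(T))$, closing the induction.

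The main thing to verify is that the hypotheses needed for the one-step construction continue to hold after iteration: namely, that $\gamma_{\theta}|_{I^{(n)}}$ inherits the continuity, injectivity and compatibility conditions required to replay the argument, and that the IDOC property propagates under Rauzy induction so that $\beta_{0,n}, \beta_{1,n}$ are well-defined at every stage. Both points are immediate: $\gamma_{\theta}$ is a topological embedding of the whole of $I$ by Theorem \ref{tOmembed}, so its restriction to any subinterval is automatically an embedding; and the standing IDOC assumption on $(\lambda, \pi)$ ensures $\mathcal{R}^n$ is defined for all $n$. I do not expect any serious obstacle here, which is consistent with the author's remark that the lemma follows easily from Theorem \ref{tOmembed}; the content is essentially bookkeeping to align the one-step statement with the iterated notation.
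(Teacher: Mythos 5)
Your proof is correct and takes essentially the same approach as the paper: the paper gives no written argument (stating only that the lemma ``easily follows from Theorem~\ref{tOmembed}''), and your induction simply makes explicit the iteration of the one-step Rauzy--Veech construction, together with Theorem 2.3 of \cite{AGPR}, already laid out in the paragraph immediately preceding the lemma, with the cocycle identity $\theta^{(n+1)}=B_{\T^{\mathcal{A}}}(\lambda^{(n)},\pi^{(n)})\cdot\theta^{(n)}$ supplying the bookkeeping for the rotation vectors.
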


%

Given a generic $([\lambda],\pi) \in \mathbb{P}_+^{\mathcal{A}} \times \mathfrak{R}$ and $\delta>0$ note that $W_{[\lambda],\pi}^{\delta}$ defines a $g(\mathfrak{R})$-dimensional submanifold embedded in the torus $\T^{\mathcal{A}}$. Pulling back the flat metric by the embedding map it is possible to construct a $g(\mathfrak{R})$-volume form and thus define a positive measure $m_{g(\mathfrak{R})}$ on $W_{[\lambda],\pi}^{\delta}$.

Denote the projection on $\T^{\mathcal{A}}$ of the Oseledets subspace $F_{[\lambda],\pi}^{2g(\mathfrak{R})}$ by $W_{[\lambda],\pi}^{SS}   =  p\left( F_{[\lambda],\pi}^{2g(\mathfrak{R})}  \right) $. Note that $W_{[\lambda],\pi}^{SS}$ is a $1$-dimensional submanifold embedded in $\T^{\mathcal{A}}$.

For any $n\geq 0$ and $\theta \in \T^{\mathcal{A}}$ let $B_{\T^{\mathcal{A}}}^{(-n)}([\lambda],\pi)\cdot \theta = \left\{  \theta' \in  \T^{\mathcal{A}} : B_{\T^{\mathcal{A}}}^{(n)}([\lambda],\pi)\cdot \theta' =  \theta   \right\}.$
Consider
$$  \mathcal{W}_{[\lambda],\pi}^{\delta} = W_{[\lambda],\pi}^{\delta} \backslash \left(  W_{[\lambda],\pi}^{SS} \cup \bigcup_{n=0}^{+\infty} B_{\T^{\mathcal{A}}}^{(-n)}([\lambda],\pi)\cdot 0    \right).  $$

Recall the definitions of arc, linear and non-trivial embeddings in the Introduction.
The following theorem establishes that for any Rauzy class $\mathfrak{R}$ such that $g(\mathfrak{R})\geq 2$ and for a full measure set of $([\lambda],\pi) \in \mathbb{P}_+^{\mathcal{A}} \times \mathfrak{R}$, when $\theta \in \mathcal{W}_{[\lambda],\pi}^{\delta} $ for sufficiently small $\delta >0$, $\gamma_{\theta}$ is a non-trivial isometric embedding of $(I,f_{\lambda,\pi})$ into any PWI $(X,T)$ that is $\theta$-adapted to $(\lambda,\pi)$. 
Since $(I,f_{\lambda,\pi})$ is topologically conjugated to the restriction of $(X,T)$ to the image of the embedding $\gamma_{\theta}(I)$ we have that the latter map is one-to-one and therefore $\gamma_{\theta}(I)$ is an invariant set for $(X,T)$. Moreover $\gamma_{\theta}(I)$ is a curve which is not a union of line segments or circle arcs. Thus, Theorem \ref{TA} follows directly from Theorem \ref{taeexistnontrivembeds} which we now state and prove.

\begin{theorem}\label{taeexistnontrivembeds}
	For any Rauzy class $\mathfrak{R}$ satisfying $g(\mathfrak{R})\geq 2$ and $\textrm{Leb} \times c_{\mathfrak{R}}$-almost every $([\lambda],\pi) \in \mathbb{P}_+^{\mathcal{A}} \times \mathfrak{R}$, there exists  $\delta>0$ such that  $\mathcal{W}_{[\lambda],\pi}^{\delta}$ is a set of full $m_{g(\mathfrak{R})}$-measure in $W_{[\lambda],\pi}^{\delta}$ and for all $\theta \in  \mathcal{W}_{[\lambda],\pi}^{\delta}$ there exists a Lipschitz map $\gamma_{\theta} : I \rightarrow \C$, which is a non-trivial isometric embedding of $(I,f_{\lambda,\pi})$ into any PWI that is $\theta$-adapted to $(\lambda,\pi)$.
\end{theorem}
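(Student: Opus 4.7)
The plan is to verify three things: (a) the set $\mathcal{W}_{[\lambda],\pi}^{\delta}$ has full $m_{g(\mathfrak{R})}$-measure in $W_{[\lambda],\pi}^{\delta}$, (b) the embedding $\gamma_{\theta}$ exists and is a Lipschitz isometric embedding, and (c) $\gamma_{\theta}$ is non-trivial.

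For (a), since the flag is nested with $\dim F^{2g(\mathfrak{R})}_{[\lambda],\pi}=1$ and $\dim F^{g(\mathfrak{R})+1}_{[\lambda],\pi}=g(\mathfrak{R})$, the projection $W^{SS}_{[\lambda],\pi}$ is a $1$-dimensional submanifold sitting inside the $g(\mathfrak{R})$-dimensional $W^{\delta}_{[\lambda],\pi}$, and is therefore $m_{g(\mathfrak{R})}$-null whenever $g(\mathfrak{R})\geq 2$. Moreover, because $B_R^{(n)}([\lambda],\pi)\in SL(d,\Z)$ is invertible over the integers, $(B_R^{(n)})^{-1}(2\pi\Z^{\mathcal{A}})=2\pi\Z^{\mathcal{A}}$; hence $B_{\T^{\mathcal{A}}}^{(-n)}([\lambda],\pi)\cdot 0=\{0\}$ for every $n$, and the entire countable union collapses to the origin, which is null. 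Part (b) then follows immediately by applying Theorem \ref{taexistembeds} to any $\theta\in W^{\delta}\supset \mathcal{W}^{\delta}$.

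For (c), I argue by contradiction in each of the linear and arc cases, using Lemma \ref{LS(T)} to relate the global embedding with the renormalized dynamics. If $\gamma_{\theta}$ has finitely many pieces, then since $|I^{(n)}|\to 0$ one may pick $n$ large enough that $I^{(n)}$ lies inside the interior of a single piece; then $\gamma_{\theta}|_{I^{(n)}}$ is an isometric embedding of $f_{\lambda^{(n)},\pi^{(n)}}$ into a PWI $\theta^{(n)}$-adapted to $(\lambda^{(n)},\pi^{(n)})$. In the linear case $\gamma_{\theta}|_{I^{(n)}}(x)=e^{i\psi}(x-c)+\gamma_{\theta}(c)$, and matching the coefficient of $x$ in the embedding identity on each atom $I^{(n)}_{\alpha}$ forces $e^{i\theta^{(n)}_{\alpha}}=1$ for every $\alpha$, so $\theta^{(n)}=0$ in $\T^{\mathcal{A}}$; this places $\theta\in B_{\T^{\mathcal{A}}}^{(-n)}([\lambda],\pi)\cdot 0=\{0\}$, which is excluded from $\mathcal{W}^{\delta}$. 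In the arc case $\gamma_{\theta}|_{I^{(n)}}(x)=re^{i(ax+\varphi)}+\xi$ with $a=1/r>0$ (since the arc is traversed isometrically), and the same matching yields the congruence $\theta^{(n)}_{\alpha}\equiv a\upsilon^{(n)}_{\alpha}\pmod{2\pi}$ for every $\alpha\in\mathcal{A}$. Shrinking $\delta$ at the outset via Lemma \ref{lsumtor}, both the canonical lift $B_R^{(n)}v$ of $\theta^{(n)}$ (with $v\in p^{-1}(\theta)$) and $a\upsilon^{(n)}=aB_R^{(n)}\upsilon$ are small in $\R^{\mathcal{A}}$ for large $n$, upgrading the congruence to the equality $B_R^{(n)}v=aB_R^{(n)}\upsilon$; invertibility of $B_R^{(n)}$ then gives $v=a\upsilon$, so that $\theta=p(a\upsilon)$.

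To close the arc case it suffices to verify that $\upsilon=\Omega_{\pi}(\lambda)$ lies in the one-dimensional extremal Oseledets subspace $F^{2g(\mathfrak{R})}_{[\lambda],\pi}$ for a.e. parameter. This follows from Oseledets theory: the bound $\|\upsilon^{(n)}\|\leq C\|\lambda^{(n)}\|$ combined with the generic top-rate decay $\|\lambda^{(n)}\|\sim e^{-\vartheta_1(\mathfrak{R})n}$ forces $\upsilon$ into the fastest-contracting direction of the restricted Rauzy cocycle on $H_{\pi}$, which is precisely $F^{2g(\mathfrak{R})}_{[\lambda],\pi}$. Consequently $\theta=p(a\upsilon)\in p(F^{2g(\mathfrak{R})})=W^{SS}_{[\lambda],\pi}$, contradicting $\theta\in \mathcal{W}^{\delta}$. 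The main obstacle is exactly this final step, the identification of the Lyapunov decay rate, which rests on the simplicity of the Zorich spectrum and the structural properties of the Kontsevich--Zorich cocycle; a secondary technicality, handled either via IDOC or by passing to a subsequence, is the requirement that $I^{(n)}$ sit inside the interior of a single arc or linear piece rather than straddling adjacent pieces of $\gamma_{\theta}$.
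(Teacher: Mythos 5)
Your proposal follows the same overall architecture as the paper's proof: quote Theorem~\ref{taexistembeds} for (b); dimension count for (a); and derive a contradiction in the linear and arc cases via Lemma~\ref{LS(T)}, landing respectively in $\bigcup_n B_{\T^{\mathcal{A}}}^{(-n)}\cdot 0$ and $W_{[\lambda],\pi}^{SS}$. Two points of difference are worth noting. First, you observe that unimodularity of $B_R^{(n)}$ over $\Z$ forces $B_{\T^{\mathcal{A}}}^{(-n)}([\lambda],\pi)\cdot 0=\{0\}$; the paper only uses countability, which is enough, so your observation is a (correct) simplification but not a needed one. Second, and more substantively, for the arc case the paper closes by citing \cite{Vi07}, Section~5.3, for the fact that $F_{[\lambda],\pi}^{2g(\mathfrak{R})}=\spn\{\upsilon^{(0)}\}$. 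You instead try to derive $\upsilon\in F_{[\lambda],\pi}^{2g(\mathfrak{R})}$ from $\|\upsilon^{(n)}\|\leq C\|\lambda^{(n)}\|$ and the asserted decay $\|\lambda^{(n)}\|\sim e^{-\vartheta_1(\mathfrak{R})n}$ under Zorich acceleration. The inequality $\|\upsilon^{(n)}\|\leq C\|\lambda^{(n)}\|$ is correct, but the claim that $\lambda$ decays at the \emph{extremal} rate $\vartheta_1$ is not a direct consequence of Oseledets: $\lambda$ is not a generic vector for the inverse-transpose cocycle (for a generic vector $((B_Z^{(n)})^*)^{-1}$ would \emph{expand}), and placing $\lambda$ in the most-contracting subspace of that cocycle requires the positivity/Perron--Frobenius structure of the Zorich matrices, not merely the Oseledets flag. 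If you want to avoid citing \cite{Vi07}, you need to make that cone argument explicit; as written, the phrase ``this follows from Oseledets theory'' hides exactly the nontrivial step. Finally, your stated worry that $I^{(n)}$ might straddle adjacent arc or linear pieces of $\gamma_{\theta}$ is unnecessary: since $\gamma_{\theta}^{(n)}(0)=0$ for all $n$, the left endpoint of $I^{(n)}$ is always $0$, and $|I^{(n)}|\to 0$, so $I^{(n)}$ eventually lies inside the leftmost piece; no subsequence or appeal to IDOC is required here.
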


\begin{proof}

	As for any $\delta>0$ we have $ \mathcal{W}_{[\lambda],\pi}^{\delta} \subseteq W_{[\lambda],\pi}^{\delta}$, by Theorem \ref{taexistembeds} for $\textrm{Leb} \times c_{\mathfrak{R}}$-almost every $([\lambda],\pi) \in \mathbb{P}_+^{\mathcal{A}} \times \mathfrak{R}$, there exists  $\delta>0$ such that for all $\theta \in \mathcal{W}_{[\lambda],\pi}^{\delta}$ there exists a Lipschitz map $\gamma_{\theta} : I \rightarrow \C$, which is an isometric embedding of $(I,f_{\lambda,\pi})$ into any PWI that is $\theta$-adapted to $(\lambda,\pi)$.

	Note that $\bigcup_{n=0}^{+\infty} B_{\T^{\mathcal{A}}}^{(-n)}([\lambda],\pi)\cdot 0$ is a countable set, $\dim(W_{[\lambda],\pi}^{SS})  = 1$ and $\dim(W_{[\lambda],\pi}^{\delta})= g(\pi)$. Thus, when $g(\mathfrak{R}) \geq 2$ we have that $\mathcal{W}_{[\lambda],\pi}^{\delta}$ is a set of full $m_{g(\mathfrak{R})}$-measure in $W_{[\lambda],\pi}^{\delta}$.

	For $\theta \in \mathcal{W}_{[\lambda],\pi}^{\delta}$, assume by contradiction that $\gamma_{\theta}$ is an arc embedding of $(I,f_{\lambda,\pi})$ into a PWI $(X,T)$ that is $\theta$-adapted to $(\lambda,\pi)$.   
	There exists $x'>0$ such that the restriction of $\gamma_{\theta}$ to $[0,x')$ is an arc map. Moreover, there exists an $N \in \N$ such that for all $n\geq N$ we have $I^{(n)} \subseteq [0,x')$. As $\gamma_{\theta}$ is an isometric embedding and $\gamma_{\theta}(0) = 0$, there is an $r>0$ and a $\varphi \in [0,2\pi)$ such that for all $x \in I^{(n)}$ we have
	\begin{equation}\label{ent3}
	\gamma_{\theta}(x) = r ( e^{i (r^{-1} x + \varphi)} - e^{i  \varphi}).
	\end{equation}
	
	By Lemma \ref{LS(T)}, for any $n \geq N$, $(X^{(n)},\mathcal{S}^{(n)}(T))$ is a PWI $\theta^{(n)}$-adapted to $(\lambda^{(n)},\pi^{(n)})$ and the restriction of $\gamma_{\theta}$ to $I^{(n)}$ is an isometric embedding of $(I^{(n)},f_{\lambda^{(n)},\pi^{(n)}})$ into $(X^{(n)},\mathcal{S}^{(n)}(T))$. Hence we have
	\begin{equation}\label{ent4}
	\gamma_{\theta}(f_{\lambda^{(n)},\pi^{(n)}}(x)) = e^{i \theta_{\alpha}^{(n)}} \left( \gamma_{\theta}(x) -\gamma_{\theta}\left(x_{\pi_{0}^{(n)}(\alpha)-1}^{(n)} \right)   \right) + \gamma_{\theta} \left(f_{\lambda^{(n)},\pi^{(n)}}\left(x_{\pi_{0}^{(n)}(\alpha)-1}^{(n)} \right) \right), 
	\end{equation}
	for all $\alpha \in \mathcal{A}$, any $x \in I_{\alpha}^{(n)}$ and any $n \geq N$.
		
	Recall that we denote $\upsilon^{( n  )} = \Omega_{\pi^{( n  )}}(\lambda^{(n )})$. Let $M>0$ be such that for all $m\geq M$ we have $s^m([\lambda],\pi)>N$.	From \eqref{ent3}, \eqref{ent4} and \eqref{eqiet2} we have
	\begin{equation}\label{ent5}
	\theta^{( s^m([\lambda]\pi)  )}= p \left ( r^{-1} \upsilon^{(  s^m([\lambda],\pi)   )} \right).
	\end{equation}
	By the proof of Theorem \ref{taexistembeds} we have $\delta < \pi$ and thus, the restriction $p|_{E_{[\lambda],\pi}^{\delta}} : E_{[\lambda],\pi}^{\delta} \rightarrow W_{[\lambda],\pi}^{\delta}$ is a bijection and thus $p^{-1}(\theta) \cap E_{[\lambda],\pi}^{\delta}$ contains a single point which we denote by $p_{\delta}^{-1}(\theta)$. As $\theta \in \mathcal{W}_{[\lambda],\pi}^{\delta}$, by \eqref{ent5} we get
	\begin{equation}\label{ent5a}
	\upsilon^{(  s^m([\lambda],\pi)  )} = B_Z^{(m)}([\lambda],\pi)\cdot p_{\delta}^{-1}(\theta). 
	\end{equation}
	By the results in \cite{Vi07} Section 5.3, it is known that $F_{[\lambda],\pi}^{2g(\mathfrak{R})}$ is equal to the linear span of $\{\upsilon^{(0)}\}$ in $\R^{\mathcal{A}}$ and thus by \eqref{ent5a} and Theorem \ref{tspectralzor} we get that $p_{\delta}^{-1}(\theta) \in F_{[\lambda],\pi}^{2g(\mathfrak{R})}$ and consequently $\theta \in W_{[\lambda],\pi}^{SS}$ which contradicts our assumption $\theta \in \mathcal{W}_{[\lambda],\pi}^{\delta}$. Therefore $\gamma_{\theta}$ is not an arc embedding.
	
	
	Now, for $\theta \in \mathcal{W}_{[\lambda],\pi}^{\delta} $, assume by contradiction that $\gamma_{\theta}$ is a linear embedding of $(I,f_{\lambda,\pi})$ into a PWI $(X,T)$ that is $\theta$-adapted to $(\lambda,\pi)$.  
	As $\gamma_{\theta}$ is an isometric embedding and $\gamma_{\theta}(0)=0$ for a sufficiently large $N \in \N$ there is  $\varphi \in [0,2\pi)$ such that
	\begin{equation}\label{ent6}
	\gamma_{\theta}(x) = e^{i\varphi}x,
	\end{equation}
	for all $x \in I^{(N)}$.
	
	By Lemma \ref{LS(T)}, $(X^{(N)},\mathcal{S}^{(N)}(T))$ is a PWI $\theta^{(N)}$-adapted to $(\lambda^{(N)},\pi^{(N)})$ and the restriction of $\gamma_{\theta}$ to $I^{(N)}$ is an isometric embedding of $(I^{(N)},f_{\lambda^{(N)},\pi^{(N)}})$ into $(X^{(N)},\mathcal{S}^{(N)}(T))$. Hence we have \eqref{ent4} which combined with \eqref{ent6} shows that $\theta^{(N)} = 0$. Therefore $\theta \in \bigcup_{n=0}^{+\infty} B_{\T^{\mathcal{A}}}^{(-n)}(\lambda,\pi)\cdot 0$,  which contradicts $\theta \in \mathcal{W}_{[\lambda],\pi}^{\delta}$. Thus $\gamma_{\theta}$ is not a linear embedding.
	
	This proves that $\gamma_{\theta}$ is a non-trivial isometric embedding of $(I,f_{\lambda,\pi})$ into $(X,T)$.
\end{proof}

\textbf{Acknowledgements.} The authors would like to thank Peter Ashwin, Michael Benedicks and Carlos Matheus for valuable suggestions and discussions.


\begin{thebibliography}{Ha}
	
	\bibitem{AKT} Adler, R., Kitchens, B., Tresser, C. (2001). \emph{Dynamics of non-ergodic piecewise affine maps of the
		torus}. Ergodic Theory and Dynamical Systems {\bf 21}, 959-999.
	%
	
	
	
	\bibitem{Ash97} Ashwin, P., (1997). \textit{Elliptic behaviour in the sawtooth standard map}, Phys. Lett. A, 232, pp. 409-416.
	
	
	
	\bibitem{AFu}
	Ashwin, P., Fu, (2002). \emph{On the Geometry of Orientation-Preserving Planar Piecewise Isometries}. Journal of Nonlinear Science, 12, no. 3, 207--240.
	
	%
	\bibitem{AG06}
	Ashwin, P. ,  Goetz, A.  (2006). \emph{Polygonal invariant curves for a planar piecewise isometry}. Trans. Amer. Math. Soc.  \textbf{358}    no. 1, 373-390.
	%
	\bibitem{AG06a}
	Ashwin, P. ,  Goetz, A. (2005). \emph{Invariant curves and explosion of periodic islands in systems of piecewise rotations}. SIAM J. Appl. Dyn. Syst., 4(2), 437-458.
	
	\bibitem{AGPR} Ashwin, P., Goetz, A., Peres, P. , Rodrigues, A. (2018).
	\emph{Embeddings of Interval Exchange Transformations in Planar Piecewise Isometries}. Ergodic Theory and Dynamical Systems.
	
	\bibitem{AF} Avila, A., Forni, G. (2007) \emph{Weak mixing for interval exchange transformations and translation flows}, Ann. of Math. (2) , 165(2), 637-664.
	
	\bibitem{AV} Avila, A., Viana, M. (2007) \textit{Simplicity of Lyapunov spectra: proof of the Zorich-Kontsevich conjecture}. Acta Math. 198, no. 1, 1--56.
	
	\bibitem{AL} Avila, A., Leguile, M. (2016) \emph{Weak mixing properties of interval exchange transformations and translation flows}, Bulletin de la Societe mathematique de France, {\bf 146}.    
	
	
	%
	%
	
	

	\bibitem{C83} Chirikov B.V. (1983) \textit{Chaotic dynamics in Hamiltonian systems with divided phase space}. In: Garrido L. (eds) Dynamical System and Chaos. Lecture Notes in Physics, vol 179. Springer, Berlin, Heidelberg.
	
	%
	
	
	
	\bibitem{Goetz1}
	Goetz, A. (2000). \emph{Dynamics of piecewise isometries}. Illinois journal of mathematics, \textbf{44}, 465 -- 478, (2000).
	%
	
	\bibitem{Ha}
	Haller, H. (1981). \emph{Rectangle Exchange Transformations}. Monatsh. Math. \textbf{91}(3), 215--232.
	
	
	\bibitem{Ke} Keane, M. S., (1975). \emph{Interval exchange transformations}, Math. Z. 141, 25-31.
	
	%
	
	\bibitem{L} Lowenstein, J. (2012).\textit{ Pseudochaotic kicked oscillators}, Beijing: Springer-Verlag; Berlin: Higher Education Press.
	
	\bibitem{M} Masur, H., (1982). \emph{Interval exchange transformations and measured foliations}, Ann. of Math. 115, 169-200.
	
	\bibitem{MMY}  Marmi, S., Moussa, P.,  Yoccoz, J-C., (2005). \textit{The cohomological equation for Roth-type interval exchange maps},    J. Ann. Math. Soc. 18, 4, 823-872
	
	\bibitem{MP} MacKay, R., Percival, I., (1985). \textit{Converse KAM - theory and practice}, Comm. Math. Phys. 94(4) 469.
	
	\bibitem{O} V. I. Oseledec, (1968) \textit{A multiplicative ergodic theorem. Characteristic Lyapunov exponents
		of dynamical systems}, Trans. Moscow Math. Soc. 19, 179--210.
	
	%
	
	\bibitem{Schwartz}
	Schwartz, R. E., (2007). \emph{Unbounded Orbits for Outer Billiards}, J. Mod. Dyn. 3, 371--424.
	
	\bibitem{SHM} Scott, A. J., Holmes, C.A., Milburn, G. (2001). \emph{Hamiltonian mappings and circle packing phase spaces}. Phys. D, 155, 34-50.
	
	\bibitem{SU} Sinai, Y., Ulcigrai, C. (2005). \emph{Weak mixing in interval exchange transformations of periodic type}. Letters in Mathematical Physics, 74, 2, 111-133.
	
	\bibitem{V1} Veech, W. A., (1982) \emph{Gauss measures for transformations on the space of interval exchange maps}, Ann. of Math. 115, 201-242.
	
	\bibitem{V4} Veech, W. A., (1984) \textit{The metric theory of interval exchange transformations. I. Generic spectral
		properties}, Amer. J. Math. 106, 1331--1359.
	
	
	\bibitem{Vi06} Viana, M., (2006) \emph{Ergodic Theory of interval exchange maps}, Revista Matematica Complutense 19, 7-100.
	
	\bibitem{Vi07} Viana, M., (2007) \emph{Lyapunov Exponents of Teichmuller Flows}, in Forni, G. et al, \textit{Partially Hyperbolic Dynamics, Laminations, and Teichmuller Flows}. Fields Institute Communications, 51, 138-201.
	
	
	
	
	\bibitem{Z1} Zorich, A., (1996) \emph{Finite Gauss measure on the space of interval exchange transformation. Lyapunov exponents}, Ann. Inst. Fourier, Grenoble, 46, 325-370.
\end{thebibliography}
\end{document}